\numberwithin{equation}{section}
\newtheorem{question}[equation]{Question}
\newtheorem{notations}[equation]{Notations}
\begin{document}

\markboth{U. Koschorke}{FIXED POINTS AND COINCIDENCES IN TORUS BUNDLES}

%%%%%%%%%%%%%%%%%%% Publisher's Area please ignore %%%%%%%%%%%%%%%%%%%%%%
\catchline{}{}{}{}{}
%%%%%%%%%%%%%%%%%%%%%%%%%%%%%%%%%%%%%%%%%%%%%%%%%%%%%%%%%%%%%%%%%%%%%%%%%

\title{FIXED POINTS AND COINCIDENCES IN TORUS BUNDLES}

\author{ULRICH KOSCHORKE}

\address{
FB 6 (Mathematik), Emmy-Noether-Campus, Universit\"{a}t\\
D 57068 SIEGEN, Germany\\
koschorke@mathematik.uni-siegen.de}

\maketitle

\begin{history}
\received{(Day Month Year)}
\revised{(Day Month Year)}
%\accepted{(Day Month Year)}
\end{history}

\begin{abstract}
Minimum numbers of fixed points or of coincidence components (realized by maps in given homotopy classes) are the principal objects of study in topological fixed point and coincidence theory. In this paper we investigate fiberwise analoga and represent a general approach e.g. to the question when two maps can be deformed until they are coincidence free. Our method involves normal bordism theory, a certain pathspace \(E_B\) and a natural generalization of Nielsen numbers.

As an illustration we determine the minimum numbers for all maps between torus bundles of arbitrary (possibly different) dimensions over spheres and, in particular, over the unit circle. Our results are based on a careful analysis of the geometry of generic coincidence manifolds. They allow also a simple algebraic description in terms of the Reidemeister invariant (a certain selfmap of an abelian group) and its orbit behavior (e.g. the number of odd order orbits which capture certain nonorientability phenomena). We carry out several explicit sample computations, e.g. for fixed points in \((S^1)^2\)-bundles. In particular, we obtain existence criteria for fixed point free fiberwise maps.
\end{abstract}

\keywords{Coincidences; Nielsen number; Reidemeister invariant.}

\ccode{AMS Subject Classification: 54H25, 55M20 (primary), 55R10, 55S35 (secondary)}

%--------------------------------------------------------------------
% ----------------- section 1 -------------------------------------
%--------------------------------------------------------------------

\section{Introduction and discussion of results} % use lowercase except for proper names
\label{sec1}

The principal question of topological fixed point theory can be phrased as follows (cf. [B], p. 9).\\

\textit{Given a selfmap \(f\) of a (connected) topological space \(M\), what is the minimum number \(\mathrm{MF}(f)\) of fixed points among all the maps homotopic to \(f\)?}\\

Soon after the groundbreaking work of S. Lefschetz [L] appeared, the Danish mathematician J. Nielsen made a decisive contribution (cf. [N1], p. 256, and [N2], p. 289): he introduced a very natural equivalence relation among the fixed points of \(f\). Counting the ``essential'' equivalence classes one then obtains the Nielsen number \(\mathrm{N}(f)\) of \(f\) which is a lower bound for \(\mathrm{MF}(f)\). Actually already in the early 1940's it was known that these two numbers are equal whenever \(M\) is a compact manifold of dimension \(m\neq 2\) or a compact surface with nonnegative Euler characteristic \(\chi(M)\). So it came as a surprise when B. Jiang was able to prove in 1984/85 that \(\,\,\mathrm{MF}(f)-\mathrm{N}(f)\,\) can be strictly positive, cf. [Ji1], [Ji2] (and even arbitrarily large, cf. [Z], [Ke], [Ji3]) for suitable selfmaps of any surface with \(\chi(M)<0\).\\

It is natural to extend these studies in two directions.

First one may investigate the fixed point behaviour of fiberwise selfmaps of a fibration and, in particular, those aspects which remain preserved under fiberwise homotopies. This was done e.g. by A. Dold and resulted in the construction of his fixed point index (cf. [D]; see also [Je1], [Je2]); different approaches were used more recently e.g. in [HKW], [GNS] and [KW].

Secondly we can extend the whole discussion to pairs of maps \(f_1, f_2: M\to N\) and their coincidence sets
\begin{equation}\label{1.1}
 \mathrm{C}(f_1,f_2)=\left\{x\in M | f_1(x)=f_2(x)\right\}.
\end{equation}
This stimulated an enormous amount of research in recent years (see e.g. [BGZ], [DG], [GR1+2], [Ko1,...,5], [KR] and others) and includes the fixed point question as the special case where \(M=N\) and \(f_2\) is the identity map \(\mathrm{id}\).\\

Recently also coincidences of fiberwise maps
\begin{equation}\label{1.2}%weniger Abstand zum Diagramm
  \xymatrix{
	f_1\; , \,f_2\; : \!\! & M \ar[rr] \ar[dr]_-{p_M} && N \ar[ld]^-{p_N}\\%duennerer pfeil oben?!
	&&B&\\
  }
\end{equation}
have attracted increased attention. Here \(p_M\) and \(p_N\) are smooth fiber bundles (with typical fibers \(F_M\) and \(F_N\) of dimensions \(m\geq 0\) and \(n\geq 0\), resp.) over the closed connected \(b\)-dimensional manifold \(B\), the total space \(M\) is also closed and the maps make the diagram commute.

One of the principal problems is to determine the \textbf{\textit{minimum number}} \(\mathrm{\mathbf{MC}}_\mathbf{B}\!\mathbf{(f_1,f_2)}\)\textbf{\textit{ of coincidence points}} among all the pairs \((f_1', f_2')\) of maps such that \(f_i'\) is fiberwise homotopic \(f_i,\, i=1,2.\) Now, since we allow the dimension of \(M\) and \(N\) to differ, generic coincidence sets may well be higher dimensional submanifolds of \(M\), and \(\mathrm{MC}_B(f_1,f_2)\) need not be finite. Thus it seems more interesting to study the (finite!) \textit{\textbf{minimum number }}\(\mathrm{\mathbf{MCC}}_\mathbf{B}\!\mathbf{(f_1,f_2)}\) \textbf{\textit{ of pathcomponents of coincidence subspaces}} of \(M\) among all pairs fiberwise homotopic to \((f_1,f_2)\).

It is particulary important to know when the minimum numbers vanish, i.e. when the maps \(f_1\) and \(f_2\) can be deformed away from one another by fiberwise homotopies. In this case we say that the pair \((f_1,f_2)\) is \textit{\textbf{loose over \(\mathit{\mathbf{B}}\)}}.

A strong tool is the normal bordism class
\begin{equation}\label{1.3}
 \widetilde{\omega}_B (f_1,f_2)=\left[C,\widetilde{g},\overline{g}\right]\in  \Omega_{m+b-n}\left(E_B(f_1,f_2);\widetilde{\varphi}\right)
\end{equation}
of coincidence data introduced in [GK]. It generalizes (and often sharpens) the fixed point index of classical Nielsen theory and the strongest (``universal'') version of Dold's (fiberwise) fixed point index (compare [GK]).

Our first two coincidence data are best described by the commuting diagram
\begin{equation}\label{1.4}
  \xymatrix{
	&**[r]\ar[d]^-{\mathrm{pr}}\save{%
	 E\small{E_B(f_1,f_2):=\left\{(x,\theta)\in M\times P(N) \,\left|\,
	  \begin{matrix} p_{N}\, \circ\,\theta\,\equiv\, p_M(x);\\ \theta(0)=f_1(x),\\
	\,\theta(1)=f_2(x)\end{matrix}\right.\right\}}\restore} \\
	C \ar[ur]^-{\widetilde{g}} \ar[r]_-{g=\mathrm{incl}}&  M
  }
\end{equation}~\\
Here \(P(N)\) denotes the space of all continuous paths \(\,\theta: [0,1] \to N\,\); moreover \(C\) is the (generic) coincidence manifold (of a smooth transverse approximation of \((f_1,f_2)\) if necessary, cf. [GK], 1.4), and \(\widetilde{g}\) lifts the inclusion map \(g\) in the Hurewicz fibration \(\mathrm{pr}\) by putting 
\[ \widetilde{g}(x):=\left(x, \text{constant path at }f_1(x)=f_2(x)\right)\] for \(x\in C\). We obtain the third coincidence datum \(\overline{g}\) by extracting the (stabilized) vector bundle isomorphism
\[TC\oplus f_1^\ast(TN)|C \cong \left.\left(TM\oplus p_M^\ast(TB)\right)\right|C\] from the geometry of \(C\subset M\) and using it to express the stable normal bundle of the manifold \(C\) as a pullback, via \(\widetilde{g}\), of the virtual vector bundle
\begin{equation}\label{1.5}
 \widetilde{\varphi}:=pr^\ast(\varphi)
\end{equation}
over \(E_B(f_1,f_2)\) (where
\begin{equation}\label{1.6}
 \varphi:= f_1^\ast(TN) -TM-p_M^\ast(TB)
\end{equation}
over \(M\); for details see [GK], 1.4-1.11 or compare [Ko2]).\\

In general the pathspace \(E_B(f_1,f_2)\) has a very rich topology, and the lifting \(\widetilde{g}\) captures much more information than the inclusion map \(g\) (which reflects only ``how the coincidence manifold \(C\) lies in \(M\)``) does. Already the decomposition of \(E_B(f_1,f_2)\) into its pathcomponents induces the decomposition
\begin{equation}\label{1.7}
 C=\coprod_{Q\in\pi_0\left(E_B(f_1,f_2)\right)} C_Q
\end{equation}
into the closed manifolds \(C_Q:=\widetilde{g}^{-1}(Q)\) (which are almost all empty).
\begin{definition} \label{1.8}
 We call a pathcomponent \(Q\) of \(E_B(f_1,f_2)\) \textbf{\textit{essential}} if the triple \(\left(C_Q,\, \widetilde{g}|C_Q,\, \overline{g}|\right)\) of restricted coincidence data represents a nontrivial normal bordism class in \(\,\,\Omega_\ast\!\left(Q;\,\widetilde{\varphi}|Q\right).\)

The \textbf{\textit{Nielsen number }}\(\mathbf{N}_\mathbf{B}\!\mathbf{(f_1,f_2)}\) is the number of essential pathcomponents of \(E_B(f_1,f_2)\).

The \textbf{\textit{geometric Reidemeister set \(\mathrm{\mathbf{R}}_\mathbf{B}\!\mathbf{(f_1,f_2)}\)}} (and \textbf{\textit{the Reidemeister number}}, resp.) of the pair \((f_1,f_2)\) is the set \(\pi_0\left(E_B(f_1,f_2)\right)\) of all pathcomponents of \(E_B(f_1,f_2)\) (and its cardinality, resp.).
\end{definition}
Clearly \(\mathrm{N_B}(f_1,f_2)\leq \# \mathrm{R_B}(f_1,f_2)\) and the Nielsen number (but \textit{not} necessarily the Reidemeister number) is always finite. Moreover it can be shown that both numbers depend only on the fiberwise homotopy classes of \(f_1\) and \(f_2\) and that
\begin{equation}\label{1.9}
 \mathrm{N_B}(f_1,f_2)\leq \mathrm{MCC}_B(f_1,f_2)\leq\mathrm{MC}_B(f_1,f_2)
\end{equation}
(compare [Ko2]).
\begin{example}[\textit{classical Nielsen fixed point theory in manifolds}]\label{1.10}\\%tilde nah am strich!
 Here \(B\) consists only of one point and \((f_1,f_2)=(f,\mathrm{id})\) where \(f\) is a selfmap of a connected manifold \(M=N\). There is a bijection from the classical (''algebraic``) Reidemeister set \(\pi_1(M)\diagup\!\!\sim\,\,\ \)  onto \(R_B(f,\mathrm{id})=\pi_0\left(E_B(f,\mathrm{id})\right)\) and
\[\widetilde{\omega}_B(f,\mathrm{id})\, \in\, \, \Omega_0\left(E_B(f,\mathrm{id});\,\widetilde{\varphi}\right)=\bigoplus_{Q\in \mathrm{R_B}(f, \mathrm{id})}\mathbb{Z}\]
records the indices of all the Nielsen fixed point classes of \(f\) (cf. [Ko2]). In particular, our definition (\ref{1.1}) agrees with the classical definition of Nielsen and Reidemeister numbers (cf. [B]). Similarly, \(\mathrm{MC}_B(f,\mathrm{id})\) is just the classical minimum number \(\mathrm{MF}(f)\) of fixed points (cf. [Br]).\(\hfill\Box\)
\end{example}
In general the lower bound \(\mathrm{N_B}(f_1,f_2)\) fails often to be also equal to \(\mathrm{MCC}_B(f_1,f_2)\) (for infinitely many counterexamples involving e.g. maps of the form \(f_1,f_2:S^{2n-1}\to S^n\) see [Ko2], 1.17). This lead to the construction of a ''nonstabilized`` version \(\omega_B^{\#}(f_1,f_2)\) of our \(\omega\)-invariant and to a sharper Nielsen number \(\mathrm{N_B}^{\#}(f_1,f_2)\) which agrees with the minimum number \(\mathrm{MCC}_B(f_1,f_2)\) at least for all maps between spheres (cf. [Ko4]). However, already for maps into real projective spaces new discrepancies appear which are related to subtle problems concerning nonvanishing Kervaire invariants or divisibility questions for Whitehead products or Hopf invariants (cf. [KR]).

In contrast, in this paper we will study a setting where our original Nielsen number \(\mathrm{N_B}(f_1,f_2)\) fully determines the minimum numbers of coincidence components or points.
\begin{definition}\label{1.11}
 A \textbf{\textit{linear torus bundle}} is a smooth fiber bundle \(p: M\to B\) with typical fiber a torus \(T^m=\left(S^1\right)^m\) and stucture group \(\mathrm{GL}\left(m,\mathbb{Z}\right)\) (which acts in the standard fashion on \(\left(\mathbb{R}^m,\mathbb{Z}^m\right)\) and hence on \(T^m\)).
\end{definition}
In the remainder of this paper \(p_M: M\to B\) and \(p_N:N\to B\) will always denote linear torus bundles with typical fibers \(T^m\) and \(T^n\), resp., of (possibly different) dimensions \(m,n\geq 0\), and \(f_1,f_2:M\to N\) will be fiberwise maps. Every fiber of \(p_N\) has a natural abelian Lie group structure which makes it isomorphic to \(T^n\) and which we write additively. Similary we can add and subtract fiberwise maps into \(N\).

Choose isomorphisms \(T^m\cong F_M:=p_M^{-1}\{\ast\}\) and \(F_N:=p_N^{-1}\{\ast\}\cong T^n\) for the fibers over some base point \(\ast\) of \(B\). When restricted to these fibers, \(f_1-f_2\) then induces the composite homomorphism
\begin{equation}\label{1.12}
\xymatrix@=2cm{
 \overline{L}\,\;: \,\;\mathbb{Z}^m=\pi_1\left(T^m\right)\,\,\ar[r]^-{\left(f_1|-f_2|\right)_\ast} &\,\,\pi_1\left(T^n\right)=\mathbb{Z}^n
}
\end{equation}
between fundamental groups which extends to yield a linear map from \(\mathbb{R}^m\) to \(\mathbb{R}^n\) (also denoted by \(\overline{L}\)).
\begin{theorem}\label{1.13}
 Assume that the base space \(B\) consists of a single point (of dimension \(b=0\)) or is the sphere \(S^b\) of dimension \(b\geq 1\). Then:
\begin{enumerate}[(i)]
 \item \begin{alignat*}{2}
&\mathrm{MCC}_B(f_1,f_2)=\mathrm{N}_B(f_1,f_2)&\qquad&\text{and}\\
&\mathrm{MC}_B(f_1,f_2)=
\begin{cases}
 \mathrm{N}_B(f_1,f_2) &\text{if }\, \mathrm{N}_B(f_1,f_2)=0\, \text{ or } \,m+b=n; \\
\infty&\text{else.}
\end{cases}
\end{alignat*}
\item Assume in addition that \(b\neq1\). Then\[
\mathrm{N}_B(f_1,f_2)=\left|\det\left(\overline{u}_1,\ldots,\overline{u}_n\right)\right|\]
where the column vectors \(\overline{u}_1,\ldots,\overline{u}_n\) of the indicated \(n\times n\)-matrix generate \(\overline{L}\left(\mathbb{Z}^m\right)\). In particular, \(\mathrm{MCC}_B(f_1,f_2)\) (or, equivalently, \(\mathrm{MC}_B(f_1,f_2)\))  vanishes if and only if the linear map \(\overline{L}:\mathbb{R}^m\to\mathbb{R}^n\) is not onto. Moreover the Reidemeister number is given by\[
\#\mathrm{R}_B(f_1,f_2)=\#\left(\mathbb{Z}^n\diagup\overline{L}\left(\mathbb{Z}^m\right)\right)=
\begin{cases}
  \mathrm{N}_B(f_1,f_2)&\text{if }\, \mathrm{N}_B(f_1,f_2)\neq 0;\\
\infty& \text{if }\, \mathrm{N}_B(f_1,f_2)= 0.
\end{cases}
\]
It follows that the invariants \(\mathrm{MC}_B(f_1,f_2)\) and \(\#\mathrm{R}_B(f_1,f_2)\) can only take the values \(\mathrm{N}_B(f_1,f_2)\) and \(\infty\); they agree precisely when\[
\mathrm{N}_B(f_1,f_2)\neq 0=m+b-n\]
(i.e. in the only case when they are both finite).
\end{enumerate}
\end{theorem}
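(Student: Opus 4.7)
The plan is to exploit the fiberwise abelian Lie group structure on \(p_N\): setting \(h := f_1 - f_2 : M \to N\), the coincidence locus \(\mathrm{C}(f_1,f_2)\) becomes the preimage \(h^{-1}(0_N)\) of the fiberwise zero section, and fiberwise homotopies of the pair correspond bijectively to those of \(h\). Since \(\mathrm{GL}(m,\mathbb{Z})\) is discrete, every linear torus bundle over a simply connected base is trivial, so for \(b \neq 1\) we may assume \(M \cong B \times T^m\) and \(N \cong B \times T^n\); for \(b=1\) the total spaces are solvmanifolds but remain aspherical. In either case the fiberwise homotopy class of \(h\) is determined by \(\overline{L}\in\mathrm{Hom}(\mathbb{Z}^m,\mathbb{Z}^n)\) together with a translation datum in \([B,T^n]\), and the whole problem reduces to counting components of the zero locus of a single fiberwise map whose essential invariant is \(\overline{L}\).

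Next I would identify \(\pi_0(E_B(f_1,f_2))\) from the long exact sequence of the Hurewicz fibration \(\mathrm{pr}:E_B\to M\). Its fiber over \(x\) is the space of paths in \(p_N^{-1}(p_M(x))\simeq T^n\) joining \(f_1(x)\) to \(f_2(x)\), hence homotopy equivalent to \(\pi_1(T^n)=\mathbb{Z}^n\). For \(b\neq 1\) the base is simply connected and \(\pi_1(M)=\mathbb{Z}^m\), so the sequence collapses to
\[
\pi_0(E_B)\,\cong\,\mathbb{Z}^n\big/\overline{L}(\mathbb{Z}^m),
\]
establishing the Reidemeister-number formula in (ii).

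To compute \(\mathrm{N}_B\) and \(\mathrm{MCC}_B\), I would invoke a linear model: up to fiberwise homotopy, \(h\) covers the quotient of the linear map \(\overline{L}:\mathbb{R}^m\to\mathbb{R}^n\), translated by a generic element of \(T^n\). If \(\overline{L}\) is not \(\mathbb{R}\)-surjective, a generic translate makes \(h^{-1}(0_N)\) empty, giving \(\mathrm{MCC}_B=\mathrm{N}_B=0\). If it is \(\mathbb{R}\)-surjective, the zero set has exactly \([\mathbb{Z}^n:\overline{L}(\mathbb{Z}^m)]=|\det(\overline{u}_1,\ldots,\overline{u}_n)|\) components, each a trivial subtorus bundle over \(B\) with fiber \(\ker(\overline{L})/(\ker(\overline{L})\cap\mathbb{Z}^m)\), and these sit in pairwise distinct path components of \(E_B\). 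By (\ref{1.9}) this gives \(\mathrm{N}_B\leq\mathrm{MCC}_B\leq\#\mathrm{R}_B=|\det|\), provided every class is essential.

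The main obstacle is essentiality. Each nonempty \(C_Q\) is a closed manifold of dimension \(m+b-n\), and \(\widetilde{g}|C_Q\) induces an isomorphism on \(\pi_1\) onto its target path component \(Q\), which is itself aspherical of matching rank. I would detect the normal bordism class in \(\Omega_{m+b-n}(Q;\widetilde{\varphi}|Q)\) by pairing the fundamental class of the subtorus against the (appropriately twisted) top cohomology of \(Q\); the nontriviality follows because both sides are represented by the same aspherical torus with trivial stable normal data. This closes the sandwich and yields (ii). Part (i) is then a dimension count applied to the linear model: when \(m+b=n\) each coincidence component is a single point, so \(\mathrm{MC}_B=\mathrm{MCC}_B=\mathrm{N}_B\); when \(m+b>n\) and \(\mathrm{N}_B>0\) each nonempty component is positive-dimensional, forcing \(\mathrm{MC}_B=\infty\); and when \(\mathrm{N}_B=0\) the linear model itself exhibits \(\mathrm{MC}_B=0\).
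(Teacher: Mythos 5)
Your reduction to $h=f_1-f_2$, the observation that discreteness of $\mathrm{GL}(m,\mathbb{Z})$ trivializes the bundles over a simply connected base, and the identification of $\pi_0(E_B)$ with $\mathbb{Z}^n/\overline{L}(\mathbb{Z}^m)$ via the fibration $\mathrm{pr}\colon E_B\to M$ are all in the spirit of the paper's argument (which passes through Theorem~\ref{2.3}, Corollary~\ref{2.8} and Propositions~\ref{3.1}, \ref{3.2}). However, there is a genuine gap: Theorem~\ref{1.13}(i) also covers $b=1$, and your proposal never actually handles it. You remark that for $b=1$ the total spaces ``are solvmanifolds but remain aspherical,'' but then every subsequent step (trivialization of the bundle, identifying the Reidemeister set with a cokernel, pushing $h$ off the zero section, the fundamental-class essentiality argument) is carried out only in the product case $M\cong B\times T^m$. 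Over $S^1$ the bundles are generically nontrivial, $\pi_1(B)$ acts nontrivially through the gluing automorphisms $A_M,A_N$, the Reidemeister set becomes the orbit set of an affine $\mathbb Z$-action $\beta_{\overline v}$ on $\mathbb{Z}^n/\overline{L}(\mathbb{Z}^m)$ rather than that quotient itself, and Nielsen classes can become inessential in ways invisible to your fundamental-class pairing (the ``odd order orbit'' phenomenon of subcase~$1-$ in Theorem~\ref{1.18}). This is precisely why the paper devotes Sections~\ref{sec4}--\ref{sec6} to $B=S^1$; part (i) for $b=1$ is obtained only as a corollary of the case analysis in Theorem~\ref{1.18} (Corollary~\ref{4.12}, Theorems~\ref{6.4}, \ref{6.9}, \ref{6.15}), not from the product-bundle argument.

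A secondary weakness is the essentiality step. You assert that $\widetilde g|C_Q$ is a $\pi_1$-isomorphism onto an aspherical $Q$ and that this suffices to detect the normal bordism class, but you do not establish that $\widetilde g$ is a homotopy equivalence onto $E'(f,0)$ (this is the content of Theorem~\ref{2.3}(ii), which requires the explicit deformation retraction of $E(f,0)$ built from the model $D_L$). Without that, the claimed nonvanishing of $\widetilde g_\ast[C_Q]$ in $H_{m+b-n}(Q;\mathbb{Z}_2)$ is not justified, and consequently neither is your deduction that $\mathrm{MC}_B=\infty$ when $m+b>n$ and $\mathrm{N}_B>0$: that argument relies on comparing $\mu_2(\widetilde\omega_B)$ for the given pair with the trivial class coming from a pair with finitely many coincidence points, exactly as in the proof of Corollary~\ref{2.8}.
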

For \(b=0\) (and \(b\geq 2\),resp.) all this will be proved in sections \ref{sec2} (and \ref{sec3}, resp.) below.\\

Next let us focus on the remaining case \(\,\,B=S^1=I\diagup\! 0\sim 1\,\,\) which turns out to be particularly interesting. There exist invertible matrices \(\overline{A}_M\in \mathrm{GL}\left(m,\mathbb{Z}\right)\) and \(\overline{A}_N\in \mathrm{GL}\left(n,\mathbb{Z}\right)\) (which induce automorphisms \(A_M\) and \(A_N\), resp., of tori) such that up to fiberwise isomorphisms
\begin{equation}\label{1.14}
 \begin{split}
  M &= \left(T^m\times I\right)\diagup(x,1)\sim\left(A_M(x),0\right)\quad\text{and}\\
N &= \left(T^n\times I\right)\diagup(u,1)\sim\left(A_N(u),0\right).
 \end{split}
\end{equation}
Let \(s_{0 M}\) (and \(s_{0 N}\), resp.) denote the zero section of \(p_M\) (and \(p_N\), resp.) defined by \(\left[t\right]\to\left[(0,t)\right]\), and put \(f_0:=s_{0 N}\circ p_M\).

Given a pair \(f_1,f_2:M\to N\) of fiberwise maps, it has the same minimum and Nielsen numbers as the pair formed by \(f:=f_1-f_2\) and \(f_0\). The fiberwise homotopy class of \(f\) is fully determined by the linear map \(\overline{L}:\left(\mathbb{R}^m,\mathbb{Z}^m\right)\to\left(\mathbb{R}^n,\mathbb{Z}^n\right)\) (which reflects the behaviour of \(f\) along a single ''vertical`` fiber, cf. \ref{1.12}) and the ''horizontal`` datum
\begin{equation}\label{1.15}
 \left[\overline{v}\right]\in\mathbb{Z}^n\diagup\left(\overline{A}_N-\mathrm{id}\right)\left(\mathbb{Z}^n\right)
\end{equation}
(which classifies the section \(f\circ s_{0 M}\) of \(p_N\), see section \ref{sec4} below).

It is easy to see that
\begin{equation}\label{1.16}
 \overline{A}_N\circ\overline{L}=\overline{L}\circ\overline{A}_M.
\end{equation}
Thus \(\overline{A}_N\) preserves \(\overline{L}\left(\mathbb{Z}^m\right)\) and every representative \(\overline{v}\) of the class \(\left[\overline{v}\right]\) defines a selfmap \(\beta_{\overline{v}}\) of \(\mathbb{Z}^n\diagup\overline{L}\left(\mathbb{Z}^m\right)\) by
\begin{equation}\label{1.17}
 \beta_{\overline{v}}\left[\overline{u}\right] := \left[\overline{A}_N\left(\overline{u}-\overline{v}\right)\right], \quad \overline{u}\in\mathbb{Z}^n.
\end{equation}
The iterates of \(\beta_{\overline{v}}\) determine an action of the group \(\mathbb{Z}\) whose orbits correspond bijectively to the elements of the Reidemeister set \(\mathrm{R}_B(f_1,f_2)=\pi_0\left(E_B(f_1,f_2)\right)\) (cf. \ref{1.8}). Indeed restrict \(E_B(f_1,f_2)\) to a single fiber \(F_M\) of \(p_M\). Then \(\;\beta_{\overline{v}}\;\) describes the operation of \(\;\pi_1\left(S^1\right)=\mathbb{Z}\;\) on \[
\pi_0\left(E_B(f_1,f_2)|F_M\right)=\pi_0\left(E(f_1|F_M,f_2|F_M)\right)\,\approx\, \mathbb{Z}^n\diagup \overline{L}\left(\mathbb{Z}^m\right)\]
(compare [Ko2] 2.1, [Ko6] and section \ref{sec4} below). Each pathcomponent \(Q\) of \(E_B(f_1,f_2)\) corresponds to the orbit which consists of the pathcomponents of the intersection \(\,Q\,\cap\,\left( E_B(f_1,f_2)|F_M\right)\).
The number of orbits of a given cardinality depends only on the class of \(\,\overline{v}\,\) in  \(\,\,\mathbb{Z}^n\diagup \left(\overline{A}_N - \mathrm{id}\right)\left(\mathbb{Z}^n\right)\,\,\) (see also lemma \ref{4.8} below).
\begin{theorem}\label{1.18}
 For every pair of fiberwise maps \(f_1,f_2:M\to N\) over \(S^1\) the minimum, Nielsen and Reidemeister numbers are given as follows.\\\\
\textbf{Case 0: }\(\ \mathrm{\mathbf{dim}}\,\mathbf{\overline{L}\left(\mathbb{R}^m\right)=n.}\) Here \(\,\mathrm{MC}_B(f_1,f_2)=\infty\,\) and \[\mathrm{MCC}_B(f_1,f_2)=\mathrm{N}_B(f_1,f_2)=\#\mathrm{R}_B(f_1,f_2)\]
equals the number of orbits of (the \(\mathbb{Z}\)-action defined by) the selfmap \(\beta_{\overline{v}}\) on the finite set \(\mathbb{Z}^n\diagup\overline{L}\left(\mathbb{Z}^m\right)\).\\\\
\textbf{Case 1: }\(\ \mathrm{\mathbf{dim}}\,\mathbf{\overline{L}\left(\mathbb{R}^m\right)=n-1.}\) Here \[
\mathrm{MC}_B(f_1,f_2)=
\begin{cases}
\mathrm{N}_B(f_1,f_2)&\text{if }\, \mathrm{N}_B(f_1,f_2)=0\,\text{ or }\,m<n;\\
\infty&\text{if }\, \mathrm{N}_B(f_1,f_2)\neq 0\,\text{ and }\,m\geq n.
\end{cases}
\]
Moreover \(\overline{A}_N\) induces an automorphism \(\overline{A}_{N\ast}\) of the quotient group
\begin{equation}\label{1.19}
\mathbb{Z}^n\diagup\left(\mathbb{Z}^n\cap\overline{L}\left(\mathbb{R}^m\right)\right)\,\,\cong\,\,\mathbb{Z}
\end{equation}
i.e. \(\overline{A}_{N\ast}=\mathrm{a}\cdot\mathrm{identity map}\) where \(a=\pm 1\).\\
\textbf{Subcase 1+ : }\(\ \mathbf{a=+1}\). Here\[
\mathrm{MCC}_B(f_1,f_2)=\mathrm{N}_B(f_1,f_2)= \left|\det\left(\overline{v},\overline{w}_1,\ldots,\overline{w}_{n-1}\right)\right|\]
where the vectors \(\overline{w}_1,\ldots,\overline{w}_{n-1}\in\mathbb{Z}^n\) generate \(\overline{L}\left(\mathbb{Z}^m\right)\). Furthermore\[
\#\mathrm{R}_B(f_1,f_2)=
\begin{cases}
 \mathrm{N}_B(f_1,f_2)&\text{if }\,\overline{v}\notin\overline{L}\left(\mathbb{R}^m\right);\\
\infty&\text{else}. 
\end{cases}
\]
\textbf{Subcase \(\mathbf{1-}\): }\(\ \mathbf{a=-1}\). Here the Reidemeister set \(\mathrm{R}_B(f_1,f_2)\) is infinite. Furthermore the quotient map \(\mathbb{Z}^n\to\mathbb{Z}\) defined by \ref{1.19} induces the homomorphism\[
\xymatrix@=1.2cm{
\mathrm{quot}\,\,:\,\, \mathbb{Z}^n\diagup\left(\overline{A}_N -\mathrm{id}\right)\left(\mathbb{Z}^n\right) \,\ar[r] & \, \mathbb{Z}_2.}\]
If \(\mathrm{quot}\left(\left[\overline{v}\right]\right)\neq 0\), then \((f_1,f_2)\) is loose over \(S^1\), i.e. \[
\mathrm{MCC}_B(f_1,f_2)= \mathrm{N}_B(f_1,f_2)=0. \]
If \(\mathrm{quot}\left(\left[\overline{v}\right]\right)= 0\), we may choose a representive \(\overline{v}\) of \(\left[\overline{v}\right]\) which lies in \(\overline{L}\left(\mathbb{R}^m\right)\). Then \(\beta_{\overline{v}}\) (cf. \ref{1.17}) restricts to yield the selfmap \(\beta_{\overline{v}}|\) of the finite set  \(\,\left(\mathbb{Z}^n\cap\overline{L}\left(\mathbb{R}^m\right)\right)\diagup \overline{L}\left(\mathbb{Z}^m\right)\); moreover \[
\mathrm{MCC}_B(f_1,f_2)=\mathrm{N}_B(f_1,f_2)\]
equals the number of \textnormal{\textbf{odd}} order orbits of (the \(\mathbb{Z}\)-action determined by) \(\ \beta_{\overline{v}}|\).\\\\
\textbf{Case 2: }\(\ \mathrm{\mathbf{dim}}\,\mathbf{\overline{L}\left(\mathbb{R}^m\right)\leq n-2.}\) Here \((f_1,f_2)\) is loose and \[
\mathrm{MC}_B(f_1,f_2)=\mathrm{MCC}_B(f_1,f_2)=\mathrm{N}_B(f_1,f_2)=0.\]
\end{theorem}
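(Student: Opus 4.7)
The plan is first to reduce the pair $(f_1,f_2)$ to the pair $(f,f_0)$ with $f:=f_1-f_2$, exploiting the fiberwise abelian Lie group structure on $N$; this preserves all four invariants in the theorem. Over $S^1$ a fiberwise map $M\to N$ is classified, up to fiberwise homotopy, by its vertical datum $\overline{L}$ of \ref{1.12} together with the horizontal class $[\overline{v}]$ of \ref{1.15}; this is the content of section \ref{sec4}. The Reidemeister set then comes from the monodromy of the Hurewicz fibration $E_B(f,f_0)\to S^1$: restricted to a fiber, $\pi_0(E_B|F_M)\cong\mathbb{Z}^n/\overline{L}(\mathbb{Z}^m)$, and the once-around-$S^1$ monodromy acts as $\beta_{\overline{v}}$, so the components of $E_B$ biject with the $\mathbb{Z}$-orbits of $\beta_{\overline{v}}$. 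All assertions about $\#\mathrm{R}_B$ follow by direct inspection of this orbit structure.

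In Case 2, the subspace $\overline{L}(\mathbb{R}^m)$ is $\overline{A}_N$-invariant of codimension $\geq 2$, so one may pick an $\overline{A}_N$-invariant transverse direction and translate in it fiberwise to separate $f$ from $f_0$; hence $(f_1,f_2)$ is loose. In Case 0, $m\geq n$ and the generic coincidence manifold $C$ has dimension $m-n+1\geq 1$, forcing $\mathrm{MC}_B=\infty$; moreover $C$ meets each fiber in exactly $|\mathbb{Z}^n/\overline{L}(\mathbb{Z}^m)|$ parallel copies of $T^{m-n}$, permuted by $\beta_{\overline{v}}$, so its components correspond one-to-one with the $\beta_{\overline{v}}$-orbits. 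Each component is essential (its normal bordism class is detected by the standard torus-to-torus Lefschetz-type index on a single fiber), which forces $\mathrm{N}_B=\#\mathrm{R}_B$; the transverse model realizes this minimum, giving $\mathrm{MCC}_B=\mathrm{N}_B$.

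Case 1 is the core of the theorem. Here $\dim\overline{L}(\mathbb{R}^m)=n-1$, so by \ref{1.16} both $\overline{L}(\mathbb{R}^m)$ and $\mathbb{Z}^n\cap\overline{L}(\mathbb{R}^m)$ are $\overline{A}_N$-invariant, and the induced action on the rank-one quotient is $a=\pm1$. In Subcase 1+ ($a=+1$) the monodromy preserves the transverse orientation; a direct computation of $\widetilde{\omega}_B$ shows that the intersection number of $f$ with $f_0$, after augmenting $\overline{L}(\mathbb{Z}^m)$ by the horizontal shift $\overline{v}$, equals the index $|\det(\overline{v},\overline{w}_1,\ldots,\overline{w}_{n-1})|$ of the augmented sublattice in $\mathbb{Z}^n$. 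This vanishes precisely when $\overline{v}\in\overline{L}(\mathbb{R}^m)$, the same condition under which $\mathrm{R}_B$ becomes infinite. In Subcase 1- ($a=-1$), the $\mathbb{Z}_2$-valued obstruction $\mathrm{quot}([\overline{v}])$ arises geometrically as the reduction mod $2$ of the horizontal shift relative to the $(\overline{A}_N-\mathrm{id})$-action. When $\mathrm{quot}([\overline{v}])\neq0$, an explicit translation by a representative of $[\overline{v}]$ in a direction transverse to $\overline{L}(\mathbb{R}^m)$ gives a fiberwise homotopy proving looseness. When $\mathrm{quot}([\overline{v}])=0$, one may take $\overline{v}\in\mathbb{Z}^n\cap\overline{L}(\mathbb{R}^m)$, and each component of $C$ corresponds to a $\beta_{\overline{v}}|$-orbit in the finite set $(\mathbb{Z}^n\cap\overline{L}(\mathbb{R}^m))/\overline{L}(\mathbb{Z}^m)$; the transverse orientation along such a circle picks up the factor $a^k=(-1)^k$, where $k$ is the cycle length, so only odd-$k$ orbits yield essential (orientation-twisted) coincidence data, while even-$k$ orbits bound in the twisted normal bordism group.

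The main obstacle is Subcase 1-: one must verify that the restricted normal bordism class $\widetilde{\omega}_B|Q$ detects exactly the odd/even dichotomy, by comparing the first Stiefel-Whitney class of the coefficient bundle $\widetilde{\varphi}|Q$ along the coincidence circle with the monodromy sign $a^k$, and then explicitly nullbordering the even-length contributions. Combined with the looseness constructions for inessential components — achievable because the aspherical torus fibers admit a transitive translation action, allowing controlled fiberwise homotopies to push such components off — this yields $\mathrm{MCC}_B=\mathrm{N}_B$ in every case, matching the general lower bound \ref{1.9}. Finally, the finite-versus-infinite dichotomy for $\mathrm{MC}_B$ reduces to the dimension count $\dim C=m+b-n=m+1-n$: this vanishes only when $m=n-1$, the unique configuration in which essential positive-dimensional components can be avoided, so in every other case with $\mathrm{N}_B\neq0$ one has $\mathrm{MC}_B=\infty$.
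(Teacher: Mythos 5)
Your proposal follows the paper's approach closely: the reduction to $(f,f_0)$ via the fiberwise group structure, the classification of fiberwise maps by $(\overline{L},[\overline{v}])$ (Propositions \ref{3.1}, \ref{4.6}), the identification of $\pi_0(E_B)$ with $\beta_{\overline{v}}$-orbits via the mapping-torus model (Theorem \ref{4.10}), the parallelepiped determinant in Subcase $1+$ (Theorem \ref{6.4}), the section-push-off argument for the looseness claims (proof of Theorem \ref{6.9}), and the pairwise elimination of coincidence components for even-length orbits. All the main ideas are there and match the paper's route.

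Two spots deserve a flag. First, in Case 0 the assertion that ``$\dim C = m-n+1\geq 1$ forces $\mathrm{MC}_B=\infty$'' is not by itself an argument: a positive-dimensional generic $C$ does not rule out a different homotopic representative with only finitely many coincidences. The paper's Corollary \ref{4.12} closes this by restricting to a fiber and quoting Corollary \ref{2.8} (every fiber must contain a coincidence point), and Theorem \ref{6.15} gives the $\mu_2$-based argument — a finite coincidence set would make $\widetilde{g}$ locally constant, hence $\widetilde{g}_\ast([C])=0$, contradicting essentiality. You invoke essentiality elsewhere but the phrase as stated overclaims. Second, in Subcase $1-$ you propose detecting essentiality by ``comparing $w_1(\widetilde{\varphi}|Q)$ with the monodromy sign $a^k$.'' This is a potential source of error: $w_1(\varphi)$ is governed by $d=\det\overline{A}_M\cdot\det\overline{A}_N$ (equation \ref{5.3}), which is independent of $a$, and if you try to run a signed ($\mathbb{Z}$-coefficient) count you would need to track both signs and explain why $d$ drops out. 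The paper sidesteps this entirely by computing in mod-$2$ homology: the exact sequence of Proposition \ref{5.4}(ii), together with Lemma \ref{6.2}, shows that $\widetilde{g}_\ast([C_Q])\neq 0$ exactly when the orbit has odd cardinality (the cokernel of $b_\ast-\mathrm{id}$ on a $k$-cycle receives the sum of $k$ ones, i.e.\ $k\bmod 2$), and Theorem \ref{6.14} upgrades this to an essentiality criterion. Your final answer (odd orbits essential) is correct, but the intermediate ``orientation-twisted'' justification should be replaced by, or reduced to, the parity count.
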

\begin{example}[\(m=n=2,\, \overline{A}_M=\overline{A}_N= \begin{pmatrix}
0 & -1 \\ 
1 & 0
\end{pmatrix}\)]\label{1.20neu2}\\
 We identify \(\,\overline{z}=(\overline{z}_1,\,\overline{z}_2)\in\mathbb{Z}^2\,\) with the Gau\ss{} integer \(\,\overline{z}_1+i\,\overline{z}_2\in\mathbb{Z}\oplus i\,\mathbb{Z}\subset\mathbb{C}\,\) so that the \(90^\circ\) degree rotation \(\,\overline{A}_M=\overline{A}_N\,\) amounts to multiplication with the complex number \(i\). Then fiberwise maps \(\,f\,:\,M\,\to\, N\,\) are classified (up to fiberwise homotopy) by pairs \(\,(\overline{L}, [\overline{v}])\,\) where \(\,\overline{L}\,:\, \mathbb{Z}^2\,\to\,\mathbb{Z}^2\,\) is complex linear and the class \([\overline{v}]\) of \(\,\overline{v}=(\overline{v}_1,\overline{v}_2)\in\mathbb{Z}^2\,\) corresponds to \(\,[\overline{v}_1+\overline{v}_2]\in\mathbb{Z}_2\,\) under the isomorphism \(\,\,\mathbb{Z}^2\diagup(i-1)\mathbb{Z}^2\,\,\cong\,\,\mathbb{Z}_2\,\,\) (cf. \ref{1.15}, \ref{1.16} and proposition \ref{4.6}). At the end of section \ref{sec4} below we will prove\vspace{-4mm}
\begin{proposition}\label{1.21neu2}
Given fiberwise maps \(f_1,\,f_2\;:\,M\,\to\,N\), consider the classifying pair \(\,\left(\overline{L},[\overline{v}]\right)\,\) for \(f=f_1-f_2\). Write \(\,\overline{L}(1)=k+i\,l\,\) and \(\,k^2+l^2=4\,q+r\,\) (with integers \(k,l,q,r\) satisfying \(q\geq0\,\) and \(\,r=0,1\) or \(2\)).\\
Then\[
\mathrm{MCC}_B(f_1,f_2)\,=\,
\begin{cases}
0&\text{if }\, k^2+l^2=0;\\
q >0& \text{if }\, k^2+l^2=4\,q>0\,\text{ and }\,\overline{v}_1\not\equiv\overline{v}_2(2);\\
q+2 &\text{if }\, k^2+l^2>0 \,\text{ is even and }\,\overline{v}_1\equiv\overline{v}_2(2);\\
q+1&\text{else}.
\end{cases}
\]

More precisely, if \(\,\overline{L}\neq0\,\), then the selfmap \(\,\beta_{\overline{v}}\,\) on \(\,\,G:=\mathbb{Z}^2\diagup\overline{L}\left(\mathbb{Z}^2\right)\,\,\) has only orbits of order 1, 2 and 4, resp. Their numbers \(\nu_1,\nu_2\) and \(\nu_4\), resp., as well as \(\,\nu:=\nu_1+\nu_2+\nu_4\,\) depend only on the cardinality \(\,k^2+l^2\,\) of \(G\) and on the parity of \(\,\overline{v}_1+\overline{v}_2\,\), and are listed in table \ref{1.22neu2}.

In particular, all odd order orbits consist of a single fixed point. Their number \(\nu_1\) has the same parity as \(\,\#G=k^2+l^2\) and can take only the values \(0,\ 1\) and \(2\).\vspace{-4mm}
\refstepcounter{equation}
\begin{table}[h]\label{1.22neu2}
%\begin{center}
\begin{tabular}{l|l|l|l}
 & \(k^2+l^2=4\,q>0\) & \(k^2+l^2=4\,q+1\) & \(k^2+l^2=4\,q+2\) \\ \hline
\(\overline{v}_1\equiv\overline{v}_2(2)\) & \(2,\ 1,\ q-1;\ q+2\) & \(1,\ 0,\ q;\ q+1\) & \(2,\ 0,\ q;\ q+2\) \\ \hline
\(\overline{v}_1\not\equiv\overline{v}_2(2)\) & \(0,\ 0,\ q;\quad\ \ \ \ q\) & \(1,\ 0,\ q;\ q+1\) & \(0,\ 1,\ q;\ q+1\)\\\hline
\end{tabular}
\caption{The numbers of orbits of \(\beta_{\overline{v}}\) with a given cardinality (1, 2, 4 or arbitrary, resp.)}
%\end{center}
\end{table}
\end{proposition}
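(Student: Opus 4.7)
The plan is to apply Theorem \ref{1.18} after identifying the relevant dynamical case, then to count orbits of $\beta_{\overline{v}}$ on $G := \mathbb{Z}^2/\overline{L}(\mathbb{Z}^2)$ by a direct Gaussian-integer computation. By (\ref{1.16}) the classifying matrix $\overline{L}$ commutes with multiplication by $i$ on both sides, hence is $\mathbb{C}$-linear; writing $\overline{L}(1)=k+il$, $\overline{L}$ becomes multiplication by $k+il$ and $G \cong \mathbb{Z}[i]/(k+il)$ with $\#G = k^2+l^2$. If $k^2+l^2 = 0$, then $\overline{L}=0$, so $\dim\overline{L}(\mathbb{R}^2) = 0$ and Case 2 of Theorem \ref{1.18} gives $\mathrm{MCC}_B(f_1,f_2)=0$. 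Otherwise $\overline{L}$ is a nonzero $\mathbb{C}$-endomorphism and hence $\mathbb{R}$-surjective, so Case 0 applies and $\mathrm{MCC}_B(f_1,f_2)=\mathrm{N}_B(f_1,f_2)$ equals the number of orbits of $\beta_{\overline{v}}(u)=i(u-\overline{v})$ on $G$.

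A direct computation gives $\beta_{\overline{v}}^4 = \mathrm{id}_G$, so every orbit has cardinality $1$, $2$, or $4$; write $\nu_1,\nu_2,\nu_4$ for the respective counts and set $P_2 := \#\{u\in G:\beta_{\overline{v}}^2(u)=u\}$. Then $\nu_1+2\nu_2 = P_2$ and $\nu_1+2\nu_2+4\nu_4 = \#G = k^2+l^2$, so it suffices to compute $\nu_1$ and $P_2$ in each sub-case. Since $\mathrm{Fix}(\beta_{\overline{v}}^3) = \mathrm{Fix}(\beta_{\overline{v}})$, the odd-order orbits are precisely the fixed points; once $\nu_1\in\{0,1,2\}$ is shown to have the same parity as $\#G$, the last two assertions of the proposition follow at once.

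To compute $\nu_1$, unfold the equation $\beta_{\overline{v}}(u)=u$ to $(i-1)u \equiv i\overline{v}\pmod{k+il}$. Let $a$ denote the $(1-i)$-adic valuation of $k+il$ in $\mathbb{Z}[i]$, so $a=0$ when $k+l$ is odd, $a=1$ when both $k,l$ are odd, and $a\geq 2$ when both are even. Then the kernel of multiplication by $(i-1)$ on $G$ has order $1$, $2$, $2$ respectively, while $G/(i-1)G \cong \mathbb{Z}[i]/((i-1), k+il)$ vanishes for $a=0$ and equals $\mathbb{Z}/2$ for $a\geq1$. Solvability of the fixed-point equation is automatic when $a=0$ and becomes $i\overline{v}\in(i-1)\mathbb{Z}[i]$, equivalently $\overline{v}_1\equiv \overline{v}_2\pmod 2$, when $a\geq 1$. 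This gives $\nu_1 = 1$ for $a=0$ and $\nu_1 \in\{0,2\}$ otherwise according to the parity condition. Similarly $\beta_{\overline{v}}^2(u)=u$ reduces to $2u \equiv (1-i)\overline{v}\pmod{k+il}$; the analogous computation with $G/2G \cong \mathbb{Z}[i]/(1-i)^{\min(a,2)}$ gives $\#G[2] = 1$, $2$, $4$ for $a=0,1,\geq 2$, and the solvability condition is automatic unless $a\geq 2$, where it again amounts to $\overline{v}_1\equiv\overline{v}_2\pmod 2$. Hence $P_2$ equals $1$, $2$, or $0$/$4$ in the three cases.

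Substituting into $\nu_2 = (P_2-\nu_1)/2$ and $\nu_4 = (\#G - P_2)/4$ reproduces Table \ref{1.22neu2} entry by entry, and summing yields the stated value of $\mathrm{MCC}_B(f_1,f_2)$. The main obstacle is the Gaussian-integer bookkeeping: pinning down the kernels and solvability conditions for $(i-1)\cdot$ and $2\cdot$ on $G$ in each valuation regime. Once that is in place, the remainder is routine arithmetic.
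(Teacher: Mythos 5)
Your proof is correct and follows essentially the same route as the paper: both arguments exploit the $\mathbb{C}$-linear structure to identify $G$ with $\mathbb{Z}[i]/(k+il)$, establish $\beta_{\overline{v}}^4=\mathrm{id}$, and reduce the orbit count to solving linear congruences involving $1\pm i$ in $G$, with the case distinction governed by whether (and to what power) $1-i$ divides $k+il$. The only difference is bookkeeping: where you organize the computation by the $(1-i)$-adic valuation $a$ and determine $\nu_2$ from $P_2=\#\mathrm{Fix}(\beta_{\overline{v}}^2)$ via a linear system, the paper instead packages the $a\geq 1$ case into a four-term exact sequence $0\to\mathbb{Z}_2\to G\xrightarrow{\eta_+}G\to\mathbb{Z}_2\to 0$ and counts order-two orbits directly — the two are arithmetically equivalent.
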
%\vspace{-8mm}
Now let us replace \(\overline{L},\,\overline{v}\) and the bundles \(M,\,N\) by \(\,\overline{L}\oplus0\,:\, \mathbb{Z}^3\,\to\,\mathbb{Z}^3,\, (\overline{v},0)\in\mathbb{Z}^3\,\) and the fiberwise product of \(M=N\) with an \(S^1\)-bundle \(R\) over \(S^1\), resp. If \(R\) is the torus \(S^1\times S^1\) (with a standard projection) we are in subcase 1+ and all Nielsen classes become inessential. However, if \(R\) is the Klein bottle \(K\) (subcase 1-) precisely those (at most two) essential Nielsen classes survive which correspond to odd order orbits - in spite of the extra space for possible deformations which the transition from \(N\) to \(\, N\,\times_B\,K\,\) provides. In particular, \(\, f_1\oplus 0\,\) can be deformed away from \(\, f_2\oplus 0\,\) if and only if \(\,k+l\,\equiv\,0\,\not\equiv\,\overline{v}_1+\overline{v}_2\ (2)\,\) or \(\, k=l=0\).\(\hfill\Box\)
\end{example}
We can avoid the many case distinctions and express the description of Nielsen numbers in theorems \ref{1.13}(ii) and \ref{1.18} in a way which may look more coherent and elegant (but which is possibly not so useful for direct concrete calculations and also obscures a little the special role which e.g. odd order orbits play).
\begin{definition}\label{1.20neu}
 \begin{enumerate}[(i)]
  \item We call a map \(\,k: G\, \rightarrow \,G'\,\) between abelian groups \textbf{\textit{affine}} (or \textbf{affine isomorphism}, resp.) if it is the sum of a group homomorphism (or isomorphism, resp.) and a constant map.
\item Given a base point \(\ast\) of \(B\), consider pairs \((G,\beta)\) where \(G\) is an abelian group and \(\beta\) is an action of the group \(\pi_1(B,\ast)\) on \(G\) by affine automorphisms. We call two such pairs \((G,\beta)\) and \((G',\beta')\) \textbf{equivalent} if there exists an affine isomorphism \(\,k: G\,\rightarrow \,G'\,\) such that \(\,k\left(\beta(y,g)\right)=\beta'\left(y,k(g)\right)\) for all \(y\in \pi_1(B,\ast)\) and \(g\in G\).

The resulting set of equivalence classes is denoted by \(\mathcal{R}_{B}\).
 \end{enumerate}
\end{definition}

Clearly the number of orbits of \(\beta\) of a given cardinality depends only on the equivalence class of \((G,\beta)\), and so does the rank of \(G\). In particular, there are welldefined functions
\begin{equation}\label{1.21neu}
\xymatrix@=1.2cm{
 \nu_{\mathrm{odd}}\,,\,\nu_{\mathrm{even}}\,,\,\nu_{\infty}\,:\,\, \mathcal{R}_{B}\,\ar[r] &\,\,\mathbb{N}\cup\{\infty\}}
\end{equation}
which count all orbits of odd, even and infinite order, resp. When we ''reduce mod \(\infty\)``, i.e. when we replace the value \(\infty\) by \(0\) while leaving all finite values unchanged, we obtain the functions
\begin{equation}\label{1.21neu'}\tag{\ref{1.21neu}'}
\xymatrix@=1.2cm{
 \nu'_{\mathrm{odd}}\,,\,\nu'_{\mathrm{even}}\,,\,\nu'_{\infty}\,:\,\, \mathcal{R}_{B}\,\,\ar[r] &\,\,\mathbb{N}}.
\end{equation}

Observe that direct sums make \(\,\mathcal{R}_{B,\ast}\,\) into a monoid. Note also that changes of basepoints in \(B\) lead to isomorphism (of fundamental groups and hence of these monoids) which preserve orbit numbers etc. Thus we will often drop the basepoints from our notation.

Now, given fiberwise maps \(\,f_1,f_2\,\) (cf. \ref{1.2}) between torus bundles, the composite of projections
\begin{equation}\label{1.25neu3}
\xymatrix@=1.2cm{
 p_M\circ \mathrm{pr}\;:\;E_B(f_1,f_2)\,\ar[r] &\, B
}
\end{equation}
(cf. \ref{1.2} and \ref{1.4}) is a Serre fiber map with fiber \(\;E(f_1|F_M\,,\;f_2|F_M)\;\). The end of the resulting fiber homotopy sequence\[
\xymatrix{
\ldots \ar[r]\, &\, \pi_1(B)\, \ar@{-->}[r]^-{\beta} &\,\pi_0\left(E(f_1|F_M,\,f_2|F_M)\right)\,\ar[r]& \,\pi_0\left(E_B(f_1,f_2)\right)\, \ar[r]&\,0
}\]
leads to a description of the elements in the geometric Reidemeister set \(\;\mathrm{R}_B(f_1,f_2)=\pi_0\left(E_B(f_1,f_2)\right)\;\) (cf. definition \ref{1.8}) as orbits of a group action \(\beta\) on the geometric Reidemeister set of the restricted pair \(\;(f_1|F_M,\,f_2|F_M)\). Since the fiber \(F_N\) has an abelian fundamental group the algebraic interpretation of this Reidemeister set yields a group structure (see also [Ko6]). We obtain an isomorphism\[
\pi_0\left(E(f_1|F_M\,,\;f_2|F_M)\right)\;\cong\;\mathbb{Z}^n\diagup\overline{L}\left(\mathbb{Z}^m\right)\]
(cf. \ref{1.12}) and a resulting group action (also denoted by \(\beta\)) of \(\,\pi_1(B)\,\) on \(\;\mathbb{Z}^n\diagup\overline{L}\left(\mathbb{Z}^m\right)\;\) by affine isomorphisms.

\begin{definition}\label{1.26neu3}
Given a pair \((f_1,f_2)\) of fiberwise maps between linear torus bundles over a closed connected manifold \(B\), we define its \textbf{\textit{Reidemeister  invariant}} \(\,\varrho_B(f_1,f_2)\in\mathcal{R}_B\,\) to be the equivalence class of the pair \(\,\left(G:=\mathbb{Z}^n\diagup\! \overline{L}\left(\mathbb{Z}^m\right)\,, \;\beta\right)\,\).
\end{definition}

The Reidemeister invariant is independent of all choices made in its construction and depends only on the fiberwise homotopy classes of \(f_1\) and \(f_2\). Furthermore it is compatible with the natural products (fiberwise products on the one hand and direct products on the other hand).

We obtain \(\;\varrho_B(f_1,f_2)\;\) by carrying out the Reidemeister operation of \(\,\pi_1(M)\,\) on \(\,\pi_1(F_N)\,\) (cf. [GK], definition 3.1) in two steps:
\begin{enumerate}[(i)]
 \item First let the ''vertical`` part of \(\,\pi_1(M)\,\) (i.e. the image of \(\,\pi_1(F_M)\,\)) act on \(\,\pi_1(F_N)\); the resulting ''partial`` orbit set is \(G\).
\item Then \(\,\beta\,\) captures the remaining ''horizontal`` action; the elements in a given orbit of \(\,\beta\,\) correspond to the pathcomponents of \(\,E_B(f_1,f_2)|F_M\,\) which lie in the same given pathcomponent of \(\,E_B(f_1,f_2)\).
\end{enumerate}

Thus the orbit numbers \(\;\nu'_{\mathrm{odd}},\,\nu'_{\mathrm{even}}\;\) and \(\;\nu'_{\infty}\;\) of \(\;\varrho_B(f_1,f_2)\;\) do not seem to involve \(\,\widetilde{\omega}\,\)-invariants and essentiality questions (cf. definition \ref{1.8}) at all but just reflect the crudest aspects of how the total space of \(\,p_M\circ \mathrm{pr}\,\) (cf. \ref{1.25neu3}) intersects one of the fibers. So the following result may come as a surprise.

\begin{theorem}\label{1.22neu}
 If \(\,B\,\) is a sphere (of any positive dimension) or a point we have for every pair \(\,f_1,\,f_2\,:\;M\,\rightarrow\,N\,\) of fiberwise maps between linear torus bundles over \(B\) \[
\mathrm{MCC}_B (f_1,f_2)=\mathrm{N}_B (f_1,f_2)=\nu_B\left(\varrho_B(f_1,f_2)\right)\]
where\[
\nu_B(G,\beta):=
\begin{cases}
 \left(\nu'_{\mathrm{odd}}+\nu'_{\mathrm{even}}+\nu_{\infty}\right)(G,\beta) &\text{if }\, \mathrm{rank}(G)\leq \mathrm{rank}\left(\pi_1(B)\right);\\
0&\text{else}.
\end{cases}
\]

The minimum number \(\mathrm{MC}_B(f_1,f_2)\) depends only on the Reidemeister invariant \(\varrho_B(f_1,f_2)\) and on the difference of the dimensions of \(M\) and \(N\). Moreover\[
\#\mathrm{R}_B (f_1,f_2) = \left(\nu_{\mathrm{odd}}+ \nu_{\mathrm{even}}+\nu_{\infty}\right)\left(\varrho_B(G,\beta)\right).\]
\end{theorem}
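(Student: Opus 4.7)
The plan is to derive Theorem \ref{1.22neu} directly from the explicit case analyses in Theorems \ref{1.13} and \ref{1.18}, by translating their formulas into orbit counts of the Reidemeister invariant. The algebraic bridge is that under the identification $G\cong\mathbb{Z}^n/\overline{L}(\mathbb{Z}^m)$ one has $\mathrm{rank}(G)=n-\dim\overline{L}(\mathbb{R}^m)$; hence the rank inequality $\mathrm{rank}(G)\leq\mathrm{rank}(\pi_1(B))$ in the definition of $\nu_B$ is exactly equivalent to the dichotomy ``$\overline{L}$ surjective'' (for $B$ a point or $S^b$ with $b\geq 2$) and to Cases $0$ and $1$ of Theorem \ref{1.18} (for $B=S^1$). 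The formula $\#\mathrm{R}_B=(\nu_{\mathrm{odd}}+\nu_{\mathrm{even}}+\nu_{\infty})(\varrho_B)$ is immediate from the tautological bijection between $\mathrm{R}_B(f_1,f_2)$ and the orbits of $\beta$ recalled before the theorem.

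For $B$ a point or a sphere of dimension $b\geq 2$ the action $\beta$ is trivial, every orbit is a singleton (hence of odd order $1$), and $\nu'_{\mathrm{odd}}(G,\beta)=\#G$ when $G$ is finite, which matches $|\det(\overline{u}_1,\ldots,\overline{u}_n)|$ of Theorem \ref{1.13}(ii). When $G$ is infinite, $\mathrm{rank}(G)>0=\mathrm{rank}(\pi_1(B))$ forces $\nu_B=0=\mathrm{N}_B$. Case $0$ ($G$ finite) and Case $2$ ($\mathrm{rank}(G)\geq 2$) for $B=S^1$ are disposed of identically.

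The technical heart lies in Case $1$ for $B=S^1$. Here there is a short exact sequence $0\to T\to G\to\mathbb{Z}\to 0$ with $T=(\mathbb{Z}^n\cap\overline{L}(\mathbb{R}^m))/\overline{L}(\mathbb{Z}^m)$ finite, and the generator of $\pi_1(S^1)$ acts on the $\mathbb{Z}$-quotient as $g\mapsto a g-v'$ with $a=\pm 1$. In Subcase $1+$ with $v'\neq 0$ (equivalently $\overline{v}\notin\overline{L}(\mathbb{R}^m)$) the quotient action has $|v'|$ infinite orbits, each lifting to $|T|$ infinite orbits on $G$; multiplicativity of indices yields $\nu_{\infty}=|v'|\cdot|T|=|\det(\overline{v},\overline{w}_1,\ldots,\overline{w}_{n-1})|=\mathrm{N}_B$. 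If $v'=0$ the determinant vanishes, all $G$-orbits are finite but infinitely many, so $\nu'_{\mathrm{odd}}=\nu'_{\mathrm{even}}=0$ and $\nu_B=0=\mathrm{N}_B$. In Subcase $1-$ the quotient action is $g\mapsto -g+c$: if $\mathrm{quot}([\overline{v}])\neq 0$ every $\mathbb{Z}$-orbit has order two and infinitely many even-order $G$-orbits arise, so $\nu_B=0=\mathrm{N}_B$; if $\mathrm{quot}([\overline{v}])=0$, then after normalizing $\overline{v}\in\overline{L}(\mathbb{R}^m)$ the $G$-orbits lying above the fixed point $\{0\}\subset\mathbb{Z}$ coincide with the orbits of $\beta_{\overline{v}}|T$, whereas every $G$-orbit above a pair $\{k,-k\}$ has even order, and infinitude of the latter kills $\nu'_{\mathrm{even}}$ globally. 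Thus $\nu_B=\nu'_{\mathrm{odd}}(\beta_{\overline{v}}|T)$, matching Theorem \ref{1.18}.

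Finally, the assertion on $\mathrm{MC}_B$ follows because Theorems \ref{1.13} and \ref{1.18} together yield $\mathrm{MC}_B=\mathrm{N}_B$ exactly when $\mathrm{N}_B=0$ or $m+b=n$, and $\infty$ otherwise; with $b$ fixed, the quantity $m+b-n$ coincides up to sign with the dimension difference $\dim M-\dim N$, while $\mathrm{N}_B$ depends only on $\varrho_B$. The main obstacle is the bookkeeping in Subcases $1\pm$: one must systematically isolate the single finite contribution carried by the ``diagonal'' copy of $T$ from the infinite family of small-order orbits lying off the diagonal, and verify that the convention ``$\nu'=\nu$ for finite, $=0$ for infinite'' built into the definition of $\nu_B$ is precisely engineered to encode this selection — in particular that the vanishing phenomena $\mathrm{N}_B=0$ of Theorem \ref{1.18} correspond exactly to the appearance of infinitely many finite orbits and no infinite ones.
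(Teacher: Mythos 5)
Your proposal is correct and takes essentially the same route as the paper: both arguments verify the orbit-count formula case by case against Theorems \ref{1.13} and \ref{1.18}, translating $\mathrm{N}_B$ into counts of odd, even and infinite orbits via the decomposition $G\cong K'\oplus\mathbb{Z}$; the paper merely records this bookkeeping in Table \ref{6.16} while you lay it out verbally. One small slip: $m+b-n$ does not coincide up to sign with $\dim M-\dim N=m-n$ when $b\neq 0$; rather, they differ by the constant $b$, which is fixed once $B$ is given, so the conclusion that $\mathrm{MC}_B$ is determined by $\varrho_B(f_1,f_2)$ together with $\dim M-\dim N$ nevertheless holds.
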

\begin{question}\label{1.28neu3}
 Is there a comparable result for other base manifolds \(\,B\,\) (where \(\,\nu_B\,\) may also involve the dimension and further aspects of \(B\))?

When (and how) can we conclude just from the topological properties of \(\,E_B(f_1,f_2)\,\) and of \(\,p_M\circ \mathrm{pr}\,\) (cf. \ref{1.25neu3}) whether a given pathcomponent \(\,Q\,\) of \(\,E_B(f_1,f_2)\,\) is essential? \(\hfill\Box\)
\end{question}

\begin{example}\label{1.23neu}
 If \(M=N\) is the fiberwise product of at least two Klein bottles over \(S^1\) (i.e. \(\,\overline{A}_M= \overline{A}_N =-\mathrm{id}\,\) on \(\mathbb{R}^n,\,n\geq 2\)) and if \(f_1\) and \(f_2\) are both fiberwise zero (i.e. \(f_1=f_2=f_0\)), then \(\nu_{\mathrm{odd}},\,\nu_{\mathrm{even}}\) and \(\nu_{\infty}\) , resp., map the Reidemeister invariant \(\,\varrho_B(f_1,f_2)=\left[\left(\mathbb{Z}^n\,,\,-\mathrm{id}\right)\right]\,\) to \(1,\,\infty\) and \(0\), resp.; therefore\[
\left(\nu'_{\mathrm{odd}}+\nu'_{\mathrm{even}}+\nu_{\infty}\right)(G,\beta)\,=\,1\,\neq\,\mathrm{N}_B(f_1,f_2)\,= \,\mathrm{MCC}_B(f_1,f_2)\,=\,0.\]
Thus the case distinction in the definition of \(\nu_B\) in theorem \ref{1.22neu} seems to be unavoidable.\(\hfill\Box\)\\

Theorems \ref{1.13}, \ref{1.18} and \ref{1.22neu} follow from the discussions in sections \ref{sec3}, \ref{sec4} and \ref{sec6} below. These involve very heavily the invariant \(\widetilde{\omega}_B(f_1,f_2)\) (i.e. a detailed analysis of the coincidence submanifold \(C\) in the domain \(M\)) and, in the process, the kernel of \(\overline{L}\). So it is all the more striking that the final results are expressed mainly in terms of data related to the target bundle \(N\) such als the cokernel of \(\overline{L}\), whereas \(M\) gets litte visibility and \(\mathrm{ker}\,\overline{L}\) plays no role at all.
\end{example}
Next consider the \(\mathrm{mod}\,2\) Hurewicz homomorphism
\begin{equation}\label{1.20}
 \mu_2: \Omega_\ast\left(E_B(f_1,f_2);\widetilde{\varphi}\right)\to H_\ast\left(E_B(f_1,f_2);\mathbb{Z}_2\right)
\end{equation}
which takes the normal bordism class \(\,\widetilde{\omega}_B(f_1,f_2)= \left[C,\widetilde{g},\overline{g}\right]\) to the image of the fundament class of \(C\) under the induced homomorphism \(\widetilde{g}_\ast\) and forgets the ''twisted framing`` \(\overline{g}\) entirely. In general coincidence theory often this means a big loss of information: singular homology (even with twisted integer coefficients) is usually far too weak to capture all essential geometric aspects (see e.g. the discussion following of (3.4) in [Ko1] for examples where \(\mu\left(\widetilde{\omega}_B(f_1,f_2)\right)\) is trivial but \(\overline{g}\) allows decisive and very subtle distinctions).

However in the setting of torus bundles we encounter a very unusual phenomenon: in sufficiently many cases \(\widetilde{g}\) turns out to be a homotopy equivalence. This is crucial in the proofs of theorems \ref{1.13} and \ref{1.18} and implies the following side result (cf. proposition \ref{3.2}, remark \ref{5.9} and theorem \ref{6.14} below).
\begin{theorem}\label{1.21}
 For all fiberwise maps \(f_1,f_2\) between linear torus bundles over \(B=\{\text{point}\}\) or \(B=S^b,\,b\geq 1\), we have: a pathcomponent \(Q\) of \(E_B(f_1,f_2)\) is essential if and only if\[
\mu_2\left(\left[C_Q,\,\widetilde{g}|C_Q,\,\overline{g}|\right]\right) = \widetilde{g}_\ast\left(\left[C_Q\right]\right) \in H_{m+b-n}\left(Q;\mathbb{Z}_2\right)\]
does not vanish (compare definition \ref{1.8}).
\end{theorem}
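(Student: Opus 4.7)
One direction is automatic: since $\mu_2$ is a homomorphism, nonvanishing of the Hurewicz image $\widetilde{g}_\ast([C_Q])$ forces the normal bordism class $[C_Q,\widetilde{g}|C_Q,\overline{g}|]$ to be essential. The content lies in the converse, which I would prove by showing that in the torus-bundle setting the essentiality of the normal bordism class is already captured by the top-dimensional mod 2 cycle $\widetilde{g}_\ast([C_Q])$.

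First I would reduce to the pair $(f,f_0)$ with $f:=f_1-f_2$ and $f_0$ the zero section, using the abelian Lie group structure on the fibers of $p_N$; this change preserves $\widetilde{g}$, $\overline{g}$ and the decomposition into pathcomponents $C_Q$. Exploiting the linear structure of the bundles I would then deform $f$ within its fiberwise homotopy class to a ``linear'' representative (controlled by $\overline{L}$ and, when $B=S^1$, by $[\overline{v}]$). For such a representative the coincidence manifold $C$ is a disjoint union of torus-subbundles of $p_M$, and each component $C_Q$ is itself a linear torus bundle over $B$.

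The central geometric step is to compare $C_Q$ with the pathcomponent $Q$ via $\widetilde{g}$. The long exact sequence of the Serre fibration $p_M\circ\mathrm{pr}\colon E_B(f_1,f_2)\to B$, whose fiber $E(f|F_M,f_0|F_M)$ is a disjoint union of $K(\mathbb Z^n,1)$'s indexed by $\mathbb Z^n/\overline L(\mathbb Z^m)$, gives $Q$ the homotopy type of a torus bundle over $B$. In the ``sufficiently many cases'' mentioned in the introduction, $\widetilde{g}|C_Q$ is a homotopy equivalence onto this $Q$; in the remaining cases (essentially the even-order orbits of Case 1- in Theorem \ref{1.18}) it is a nontrivial finite cover of even degree, so $\widetilde{g}_\ast([C_Q])$ vanishes mod 2 and, as a parallel computation shows, the normal bordism class also vanishes. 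This case-by-case analysis is the content of Proposition \ref{3.2}, Remark \ref{5.9} and Theorem \ref{6.14}.

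In the homotopy-equivalence case $\overline{g}$ identifies $\widetilde{g}^\ast(\widetilde{\varphi}|Q)$ with the stable normal bundle of the Lie group $C_Q$, hence (up to sign) with its canonical Lie framing; the top-dimensional piece of the framed bordism class in the Atiyah--Hirzebruch filtration is then the fundamental class of $C_Q$ with coefficient $\pm 1$, which survives under reduction mod 2. The main obstacle is the geometric analysis of the third paragraph: verifying in each subcase whether $\widetilde{g}|C_Q$ is a homotopy equivalence, and handling the residual subcases where both sides of the biconditional vanish for a common structural reason (notably the parity distinction governing odd- versus even-order orbits of $\beta_{\overline v}$).
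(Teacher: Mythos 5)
Your overall plan matches the paper's: reduce to pairs $(f,f_0)$, straighten $f$ to a linear representative, and run a case analysis over the possibilities of Theorems \ref{1.13} and \ref{1.18}, showing that the two sides of the biconditional either both hold or both fail. You also correctly identify where the work lives (Proposition \ref{3.2}, Remark \ref{5.9}, Theorem \ref{6.14}), and you rightly observe that the ``$\mu_2\neq0\Rightarrow$ essential'' direction is automatic, so everything reduces to the converse.

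Two of your steps diverge from the paper and one of them is imprecise. The Lie-framing/Atiyah--Hirzebruch argument in your fourth paragraph is unnecessary and in fact inverts the logic: once $\widetilde g|C_Q$ is known to be a homotopy equivalence onto a pathcomponent of $E|F_M$ sitting inside $Q$ (Theorem \ref{2.3}(ii), Theorem \ref{4.10}(ii)), the class $\widetilde g_\ast([C_Q])$ equals $\mathrm{incl}_\ast(0,\dots,0,1,0,\dots)$ and is nonzero by Lemma \ref{6.2}; essentiality then follows from your own automatic direction. There is no need to invoke the framing $\overline g$ at all -- indeed the whole point of Theorem \ref{1.21} is that $\overline g$ never enters. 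Secondly, in subcase $1-$ with $\overline v_2$ even, describing $\widetilde g|C_Q$ as ``a nontrivial finite cover of even degree'' is not geometrically accurate: $C_Q$ consists of $|\mathcal O|$ pairwise disjoint affine $(m-n+1)$-tori $\partial_0C_{[\overline u]}$, all lying in one fiber $p_M^{-1}\{[0]\}$, each mapping by a homotopy equivalence onto the same fiber component of $Q\to S^1$; so $\widetilde g_\ast([C_Q])=|\mathcal O|\cdot(0,\dots,1,\dots)$, which vanishes mod $2$ precisely when $|\mathcal O|$ is even. Moreover the paper does not rely on ``a parallel computation'' to kill the bordism class here: in the proof of Theorem \ref{6.9} it eliminates the components of $C_Q$ geometrically, in pairs, by deforming $f_s$ along the interior bordisms $C_{[\overline u]}$, which gives the vanishing of $\widetilde\omega_{B,Q}$ directly. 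Finally, your stated dichotomy (homotopy equivalence vs.\ even-order orbit) leaves out the loose subcases (Case 2; subcase $1+$ with $\overline v\in\overline L(\mathbb R^m)$; non-surjective $\overline L$ for $b\neq1$), though these are harmless since the coincidence set there can be made empty and both sides vanish trivially.
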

Thus we do not need the full power of normal bordism techniques in our special setting but homological methods suffice here to determine Nielsen (and hence minimum) numbers. As another illustration of this point let us mention the following criterion (cf. corollary \ref{6.13} below):\\
a pathcomponent \(Q\) of \(\,E_B(f_1,f_2)\,\) is essential in subcase 1\(-\ \) of theorem \ref{1.18} if and only if the projection to \(B=S^1\) induces an epimorphism from \(H_1\left(Q;\mathbb{Z}_2\right)\) to \(H_1\left(S^1;\mathbb{Z}_2\right)\).\\

In section \ref{sec5} below we will present a technique to compute the relevant normal bordism and \(\mathrm{mod}\,2\) homology groups when \(B=S^1\). This allows us also to compare \(\widetilde{\omega}_B(f_1,f_2)\) to the seemingly less complicated invariant
\begin{equation}\label{1.22}
 \omega_B(f_1,f_2)=\mathrm{pr}_\ast \left(\widetilde{\omega}_B(f_1,f_2)\right)\in \Omega_{m+1-n}\left(M;\varphi\right)
\end{equation}
which does not involve the pathspace \(E_B(f_1,f_2)\) (cf. \ref{1.4} and \ref{1.6}). It turns out that \(\mu_2\left(\widetilde{\omega}_B(f_1,f_2)\right)\in H_{m+1-n} \left(E_B(f_1,f_2)\right)\) is not only much stronger than \(\,\mu_2\left(\omega_B(f_1,f_2)\right)\) but often also easier to work with since it contains all the relevant information but no unnecessary redundancies.
\begin{example}[Fixed point theory over \(S^1\)]\label{1.26neu2}\\
 Here we assume that \(\,p_M=p_N\,\,:\,\, M=N\,\to\,S^1\,\). The coincidence questions concerning a pair of fiberwise maps \(\,f_1,f_2\,:\,N\,\to\,N\,\) amount to fixed point questions for \(\,f_1-f_2+\mathrm{id}\).

In turn, let us study the fixed points of a fiberwise selfmap \(f\) on \(N\) or, equivalently, the coincidences of \(\,(f-\mathrm{id},f_0)\,\). If we fix a fiber of \(N\) and identify its fundamental group with \(\mathbb{Z}^n\), the restriction of \(f\) induces a homomorphism \(\,f|_\ast\,:\,\mathbb{Z}^n\,\to\,\mathbb{Z}^n\,\) (cf. \ref{1.12}) (which extends to a linear selfmap of \(\mathbb{R}^n\)). We have to apply theorem \ref{1.18} to \(\,\overline{L}:=f|_\ast -\mathrm{id}\,\) and to the residue class \(\,[\overline{v}]\in\mathbb{Z}^n\diagup\left(\overline{A}_N-\mathrm{id}\right)\mathbb{Z}^n\,\) determined by the section \(\,f\circ s_{0M}\,\) (cf. \ref{1.15}). Since \(m=n\) and \(\overline{A}_M=\overline{A}_N=:\overline{A}\) the case distinctions in \ref{1.18} can be expressed also in terms of the eigenspace \(W_{+1}\) (for the eigenvalue \(+1\)) of \(\,f|_\ast\,:\,\mathbb{R}^n\,\to\,\mathbb{R}^n\): the cases are numbered by the dimension of \(W_{+1}\) (which equals the codimension of \(\overline{L}\left(\mathbb{R}^n\right)\) in \(\mathbb{R}^n\)); in case 1 the subcases are distinguished by the determinant of the restricted endomorphism \(\,\overline{A}|\,=\,a\cdot \mathrm{id}\,\) on the line \(W_{+1}\) of eigenvectors.

However, when calculating orbit numbers - and hence minimum numbers of (pathcomponents of) fixed points - we should rather focus our attention on the \textbf{\textit{image}} of \(\overline{L}\) and its complement.

For an illustration assume \(m=n=2\) and consider case 1 which is most interesting. Here it is very appropriate to use the coordinate system of \(\mathbb{Z}^2\) (and \(\mathbb{R}^2\)) determined by a choice of integer basis vectors \(\overline{y}_1,\overline{y}_2\in\mathbb{Z}^n\) such that \(\overline{y}_1\) generates the intersection of \(\mathbb{Z}^2\) with the line \(\,\overline{L}\left(\mathbb{R}^2\right)\,=\,\left(f|_\ast-\mathrm{id}\right)\left(\mathbb{R}^2\right)\,\), and \(\overline{y}_2\) realizes the minimal strictly positive distance from this line. Then the gluing matrix \(\overline{A}\), a shift vector \(\overline{v}\) and a generator of the group \(\,\overline{L}\left(\mathbb{Z}^2)\,=\,\left(f|_\ast-\mathrm{id}\right)\left(\mathbb{Z}^2\right)\right)\,\), resp., take the form \[
\overline{A}\,=\,a\cdot \begin{pmatrix} \det\,\overline{A}&\ast \\ 0&1\end{pmatrix},\quad
 \overline{v}=\begin{pmatrix}\overline{v}_1\\\overline{v}_2\end{pmatrix}\quad\text{ and }\quad
\begin{pmatrix}\pm q\\0\end{pmatrix},\,\text{ resp.,}\]
where \(\,a,\) and \(\ \det\,\overline{A}\,\) are equal to \(\,+1\,\) or \(\,-1\,\) and \(\,q\,\) denotes the order of the torsion subgroup \(\,\,\left(\mathbb{Z}^2\cap\overline{L}\left(\mathbb{R}^2\right)\right)\diagup \overline{L}\left(\mathbb{Z}^2\right)\,\,\) of \(\,G\,=\,\mathbb{Z}^2\diagup\overline{L}\left(\mathbb{Z}^2\right)\).

If in addition \(a=-1\) and \(\overline{v}_2 \equiv 0(2)\) we can (and will) choose a representative \(\overline{v}\) of \(\,[\overline{v}]\in\mathbb{Z}^2\diagup \left(\overline{A}_N-\mathrm{id}\right)\mathbb{Z}^2\,\) such that \(\,\overline{v}_2=0\). Then \(\,\beta_{\overline{v}}|\) (cf. theorem \ref{1.18}) is the affine selfmap on \(\mathbb{Z}_q\) defined by\[
\beta_{\overline{v}}|\left([k]\right)\,=\,\left[-(\det\,\overline{A})\cdot(k-\overline{v}_1)\right];\]
thus \(\,\beta_{\overline{v}}|\,\) is an involution (whose only odd order orbits consist of fixed points) or a translation (all of whose orbits have order \(\,q/\mathrm{gcd}(q,\overline{v}_1)\)). Therefore theorem \ref{1.18} implies the following as an easy special case.
\begin{theorem}\label{1.27neu2}
 Let the torus bundle \(N\) over \(S^1\) be determined by the gluing matrix \(\,\overline{A}\in\mathrm{GL}(2,\mathbb{Z})\). Given a fiberwise map \(f\,:\,N\,\to\,N\), let \(\,f|_\ast\,\) denote the (linear) endomorphism of \(\mathbb{Z}^2\) (and, by extension, of \(\mathbb{R}^2\)) induced on the fundamental group of a single fiber of \(N\). Let \(W_{+1}\) be the (real) eigenspace of \(\,f|_\ast\,\) with eigenvalue \(+1\). Recall that \(\,\mathrm{MC}_B(f,\mathrm{id})\,\) is the (fiberwise version of the) classical minimum number of fixed points of the map \(f\) within its (fiberwise) homotopy class. Similarly, \(\,\mathrm{MCC}_B(f,\mathrm{id})\,\) is the minimum number of pathcomponents of fixed point subspaces of \(N\) within the fiberwise homotopy class of \(f\).\\
\textbf{Case 0: }\textnormal{\textbf{dim}} \(\mathbf{W_{+1}=0}\). Here \(\,\mathrm{MC}_B(f,\mathrm{id})=\infty\). In contrast, \(\,\mathrm{MCC}_B(f,\mathrm{id})\,\) is the (finite) number of all orbits of the affine selfmap \(\,\beta_{\overline{v}}\,\) on \(\,\,\mathbb{Z}^2\diagup\!\left(f|_\ast-\mathrm{id}\right)\mathbb{Z}^2\,\) (cf. \ref{1.17}).\\
\textbf{Case 1: }\textnormal{\textbf{dim}}\(\mathbf{W_{+1}=1}\). Here \[
\mathrm{MC}_B(f,\mathrm{id})\,=\,
\begin{cases}
 0 &\text{if }\, \mathrm{MCC}_B(f,\mathrm{id})=0;\\
\infty &\text{else}.
\end{cases}
\]
If \(\overline{A}\) restricts to the identity map \(\mathrm{id}\) on the eigenline \(W_{+1}\), then\[
\mathrm{MCC}_B(f,\mathrm{id})\,=\,q\cdot \left|\overline{v}_2\right|.\]
If \(\,\overline{A}|W_+=-\mathrm{id}\,\) and \(\overline{v}_2\) is odd, then \[
\mathrm{MC}_B(f,\mathrm{id})=\mathrm{MCC}_B(f,\mathrm{id})=0.\]
If \(\,\overline{A}|W_+=-\mathrm{id}\,\) and \(\overline{v}_2=0\), then the value of \(\,\mathrm{MCC}_B(f,\mathrm{id})\,\) depends on \(q,\, \overline{v}_1\) and the determinant of \(\overline{A}\) as follows.\[
\mathrm{MCC}_B(f,\mathrm{id})\,=
\begin{cases}
 r:=\mathrm{gcd}(q,\overline{v}_1) & \text{if }\,\det\,\overline{A}=-1\,\text{ and }\, \frac{q}{r}\,\text{ is odd};\\
2 & \text{if }\,\det\,\overline{A}=+1\,\text{ and }\, q\equiv\overline{v}_1\equiv0(2);\\
1 & \text{if }\,\det\,\overline{A}=+1\,\text{ and }\, q\,\text{ is odd};\\
0 &\text{else}.
\end{cases}
\]
\textbf{Case 2: }\textnormal{\textbf{dim}}\(\mathbf{W_{+1}=2}\). (i.e. \(f|_\ast\equiv\mathrm{id}\)). Here\[
\mathrm{MC}_B(f,\mathrm{id})\,=\,\mathrm{MCC}_B(f,\mathrm{id})\,=\,0.\]
\end{theorem}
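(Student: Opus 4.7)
The plan is to deduce this theorem directly from Theorem \ref{1.18} applied to the coincidence pair \((f_1,f_2):=(f-\mathrm{id},\,f_0)\), which has the same minimum and Nielsen numbers as \((f,\mathrm{id})\) (as recalled in the preamble to this example). Under this identification \(m=n=2\), the fiberwise homomorphism \(\overline{L}\) of (\ref{1.12}) is \(f|_\ast-\mathrm{id}\), and \(W_{+1}=\ker\overline{L}\), so \(\dim W_{+1}\) equals the codimension of \(\overline{L}(\mathbb{R}^2)\) in \(\mathbb{R}^2\); the three cases of the present theorem correspond bijectively to the three cases of Theorem \ref{1.18}. Cases 0 and 2 are then immediate transcriptions, and the conclusion \(\mathrm{MC}_B\in\{0,\infty\}\) in Case 1 follows from Theorem \ref{1.18} because \(m\geq n\); so it remains only to compute \(\mathrm{N}_B=\mathrm{MCC}_B\) in Case 1.

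Throughout Case 1 I would work in the basis \(\overline{y}_1,\overline{y}_2\) of the preamble, in which \(\overline{L}(\mathbb{Z}^2)=q\,\mathbb{Z}\,\overline{y}_1\) and \(\overline{A}\) has the displayed upper-triangular form. In Subcase 1+ (\(a=+1\)), Theorem \ref{1.18} gives \(\mathrm{N}_B=|\det(\overline{v},\,q\overline{y}_1)|\); writing \(\overline{v}=(\overline{v}_1,\overline{v}_2)\) and \(q\overline{y}_1=(q,0)\) in coordinates, this equals \(q\,|\overline{v}_2|\). In Subcase 1- (\(a=-1\)) I would first identify \(\mathrm{quot}([\overline{v}])\) with \(\overline{v}_2\bmod 2\): the projection \(\mathbb{Z}^2\to\mathbb{Z}^2/(\mathbb{Z}^2\cap\overline{L}(\mathbb{R}^2))\cong\mathbb{Z}\) sends \((\overline{v}_1,\overline{v}_2)\mapsto\overline{v}_2\), on which \(\overline{A}_{N\ast}-\mathrm{id}=-2\) with cokernel \(\mathbb{Z}_2\). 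Hence \(\overline{v}_2\) odd immediately yields \(\mathrm{MCC}_B=0\). If \(\overline{v}_2\) is even, choose a representative with \(\overline{v}_2=0\); then on \(\mathbb{Z}\overline{y}_1/q\mathbb{Z}\overline{y}_1\cong\mathbb{Z}_q\) the restriction \(\beta_{\overline{v}}|\) becomes \([k]\mapsto[-\det\overline{A}\cdot(k-\overline{v}_1)]\), because \(\overline{A}\) acts on \(\overline{y}_1\) by multiplication by \(a\det\overline{A}=-\det\overline{A}\).

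For \(\det\overline{A}=-1\) this is the translation \([k]\mapsto[k-\overline{v}_1]\), whose \(r:=\gcd(q,\overline{v}_1)\) orbits share the common order \(q/r\); Theorem \ref{1.18} then gives \(\mathrm{N}_B=r\) when \(q/r\) is odd and \(\mathrm{N}_B=0\) otherwise. For \(\det\overline{A}=+1\) it is the involution \([k]\mapsto[\overline{v}_1-k]\); non-fixed orbits have order \(2\), and the fixed points (the only odd-order orbits) solve \(2k\equiv\overline{v}_1\pmod{q}\) --- one solution if \(q\) is odd, two if \(q\) and \(\overline{v}_1\) are both even, none if \(q\) is even and \(\overline{v}_1\) is odd. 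Collecting these four sub-sub-counts reproduces the stated piecewise formula. The only real obstacle is careful bookkeeping: one must verify that \(\mathrm{quot}\), \(\overline{A}_{N\ast}\) and especially \(\beta_{\overline{v}}|\) are represented correctly in the chosen basis (in particular, that the sign \(-\det\overline{A}\) surfaces correctly in \(\beta_{\overline{v}}|\)), and that the odd/even orbit distinction on \(\mathbb{Z}_q\) is tracked through all four sub-sub-cases of Subcase 1- without error.
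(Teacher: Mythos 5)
Your proposal is correct and follows essentially the same route as the paper: reduce fixed points of \(f\) to coincidences of \((f-\mathrm{id},f_0)\), match the three cases of Theorem \ref{1.18} to \(\dim W_{+1}\), and in Case 1 pass to the basis \(\overline{y}_1,\overline{y}_2\) so that \(\beta_{\overline{v}}|\) on \(\mathbb{Z}_q\) becomes \([k]\mapsto[-(\det\overline{A})(k-\overline{v}_1)]\), i.e.\ a translation or an involution according to \(\det\overline{A}\). The only difference is that you spell out the elementary orbit count (which the paper leaves implicit with ``therefore theorem \ref{1.18} implies the following as an easy special case''), and you do so correctly.
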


In particular, \(\mathrm{MC}_B(f,\mathrm{id})\) can take only the values \(0\) or \(\infty\) whereas \(\mathrm{MCC}_B(f,\mathrm{id})\) or, equivalently, the Nielsen number \(\mathrm{N}_B(f,\mathrm{id})\) seems to capture the fixed point behavior of \(f\) (up to homotopy) very well.
\begin{corollary}\label{1.28neu2}
 There is a fiberwise homotopy from \(f\) to a fixed point free map if and only if \(\,f|_\ast\equiv\mathrm{id}\,\) or else the eigenspace \(W_{+1}\) of \(f|_\ast\) has dimension \(1\) and one of the following conditions hold:
\begin{enumerate}[(i)]
 \item \(\overline{A}|W_{+1}\,=\,\mathrm{id}\) and \(\overline{v}_2=0\);
\item \(\overline{A}|W_{+1}\,=\,-\mathrm{id}\) and \(\overline{v}_2\) is odd;
\item \(\overline{A}|W_{+1}\,=\,-\mathrm{id},\ \overline{v}=(\overline{v}_1,0),\ \det\,\overline{A}=1\) and \(q\equiv0\not\equiv\overline{v}_1(2)\);
\item \(\overline{A}|W_{+1}\,=\,-\mathrm{id},\ \overline{v}=(\overline{v}_1,0),\ \det\,\overline{A}=-1\) and \(q\) is an even multiple of the greatest common denominator \(r\) of \(q\) and \(\overline{v}_1\).
\end{enumerate}
\end{corollary}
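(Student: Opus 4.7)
The plan is to read the corollary off directly from Theorem \ref{1.27neu2}. First I would observe that $\mathrm{MC}_B(f,\mathrm{id})=0$ is equivalent to $\mathrm{MCC}_B(f,\mathrm{id})=0$: in each of the three cases of Theorem \ref{1.27neu2}, the value of $\mathrm{MC}_B$ is either $0$ or $\infty$, and it is $0$ precisely when $\mathrm{MCC}_B$ is. Hence $f$ is fiberwise homotopic to a fixed point free map if and only if $\mathrm{MCC}_B(f,\mathrm{id})=0$. Case 0 ($\dim W_{+1}=0$) never produces a loose pair since $\mathrm{MC}_B=\infty$ there, while Case 2 ($\dim W_{+1}=2$) always does and corresponds exactly to $f|_\ast\equiv\mathrm{id}$, the first main alternative in the statement. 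The remaining work is to enumerate the vanishing conditions inside Case 1.

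In Case 1 I would split on the action of the gluing matrix $\overline A$ on the eigenline $W_{+1}$. If $\overline A|W_{+1}=\mathrm{id}$, Theorem \ref{1.27neu2} gives $\mathrm{MCC}_B=q\cdot|\overline v_2|$. Since in Case 1 the endomorphism $\overline L=f|_\ast-\mathrm{id}$ is nonzero, the torsion order $q$ of $(\mathbb Z^2\cap\overline L(\mathbb R^2))/\overline L(\mathbb Z^2)$ is a strictly positive integer, so vanishing is equivalent to $\overline v_2=0$, which is condition (i). If $\overline A|W_{+1}=-\mathrm{id}$ and $\overline v_2$ is odd, Theorem \ref{1.27neu2} already asserts $\mathrm{MCC}_B=0$ unconditionally, yielding condition (ii).

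Finally, for $\overline A|W_{+1}=-\mathrm{id}$ with $\overline v_2=0$ I would split further by the sign of $\det\overline A$. For $\det\overline A=+1$ the theorem returns $2$ when $q\equiv\overline v_1\equiv0\,(2)$, returns $1$ when $q$ is odd, and $0$ otherwise; so vanishing is equivalent to $q$ being even while $\overline v_1$ is odd, i.e. $q\equiv0\not\equiv\overline v_1\,(2)$, which is condition (iii). For $\det\overline A=-1$ the theorem returns $r=\gcd(q,\overline v_1)$ when $q/r$ is odd and $0$ otherwise; since $r\geq1$, vanishing is equivalent to $q/r$ being even, meaning $q$ is an even multiple of $r$, which is condition (iv). Collecting (i)--(iv) together with the Case 2 alternative $f|_\ast\equiv\mathrm{id}$ exhausts all possibilities. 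The argument is purely a bookkeeping exercise and the main (minor) obstacle is simply to verify that $q\geq1$ throughout Case 1, so that condition (i) is not vacuous and the ``else'' clauses of Theorem \ref{1.27neu2} are correctly identified with the negations above.
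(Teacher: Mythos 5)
Your proposal is correct and is exactly the intended argument: the paper states Corollary \ref{1.28neu2} as an immediate consequence of Theorem \ref{1.27neu2} with no separate proof, and your case-by-case reading (including the observation that $q\geq1$ in Case 1 because $\overline{L}=f|_\ast-\mathrm{id}\neq0$ forces $(\mathbb{Z}^2\cap\overline{L}(\mathbb{R}^2))/\overline{L}(\mathbb{Z}^2)$ to be a nontrivial finite cyclic group) supplies precisely the bookkeeping the paper leaves to the reader.
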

This should be compared to the work of D. Gon\c{c}alves, D. Penteado and J. Vieira (cf. [GPV]).
\end{example}
\begin{notations}\label{1.27}\\
 We call a map \(f: M\to N\) \textbf{\textit{fiberwise}} if \(\,p_N\circ f=p_M\) (cf. \ref{1.2}). This agrees with the use of the term ''\textbf{\textit{fiber-preserving}}`` in [GK] (but \textit{not} in [BS] and [Lee], where it is only required that \(p_N\circ f=f'\circ p_M\) for some selfmap \(f'\) of the base \(B\)).

A pair \((f_1,f_2)\) of fiberwise maps is called \textbf{\textit{fiberwise loose}} (or \textbf{\textit{loose over \(\mathit{\mathbf{B}}\)}} or \(\mathit{\mathbf{B}}\)-\textbf{\textit{loose}}, cf. [GK]) if \(f_1,\,f_2\) can be deformed through fiberwise maps so as become coincidence free.

The group compositions in tori and e.g. in the group of fiberwise maps between linear torus bundles are written as additions.
\[
\xymatrix@=1.5cm{
q_k\,:\, \mathbb{R}^k\,\,\ar[r] &\,\,T^k\,=\,\mathbb{R}^k\diagup\mathbb{Z}^k,\qquad k\geq 0,
}\]
denotes the standard covering (or quotient) map. Elements in Euclidean spaces (and their images in tori, resp.) are written \(\,\overline{x},\,\overline{y},\,\overline{u},\,\overline{v},\,\overline{w},\,\ldots\) (and \(x=q_k(\overline{x})\), \(y,\,u,\,v,\,w,\,\ldots\), resp.). A homomorphism \(L\) between tori determines the linear lifting \(\overline{L}\) between Euclidean spaces (which preserves the integer lattice) and vice versa. Our notation makes no destinction between such a lifting \(\overline{L}:\mathbb{R}^m\,\to\,\mathbb{R}^n\) and its restriction \(\overline{L}:\mathbb{Z}^m\,\to\,\mathbb{Z}^n\).

The tildas in \(\widetilde{\omega},\,\widetilde{g},\,\widetilde{\varphi},\,\ldots\) refer to liftings in the fibration \(\mathrm{pr}\) (cf. \ref{1.4}).\\

All homology groups have coefficients in \(\mathbb{Z}_2\). The fundamental class of a closed manifold \(C\) (and the unit interval, and references to publications, resp.) are denoted by \([C]\) (and \(I=[0,1]\), and e.g. [BGZ], resp.). Otherwise square brackets stand for equivalence classes (w.r. to an equivalence relation which should be obvious from the context), e.g. \([\,\overline{x}]=q_k(\overline{x}) =x\) and \([x,t]=\text{equivalence class of the pair } (x,t)\).\\

\(\# S\) (and \(\mathrm{id},\) and \(\det\), resp.) denote the (integer or infinite) number of elements in a set \(S\) (and the identity map, and the determinant at hand, resp.).
\end{notations}

%--------------------------------------------------------------------
% ----------------- section 2 -------------------------------------
%-------------------------------------------------------------------

\section{Coincidences in tori}\label{sec2}
In this section we analyze the case \(B=\{\mathrm{point}\}\) (where it is customary to drop the subscript \(B\) from the notations).

Using the group addition in tori we can simplify our exposition significantly. Given continuous maps \(f_1,f_2: T^m\to T^n\), note that the pairs \((f_1,f_2)\) and \(\left(f:=f_1-f_2,\,0\equiv f_2-f_2\right)\) have the same Nielsen, Reidemeister and minimum numbers. Moreover these invariants and further coincidence data are compatible with continuous deformations (see [Ko2]). E.g. we may ''\textit{straighten}`` \(f\) by the canonical homotopy from \(f\) to \(f_{L,f(0)}:=L+f(0)\) (where the Lie group homomorphism \(L\) is extracted from the induced homomorphism
\begin{equation}\label{2.1}
\xymatrix{
 \overline{L}\,\,:\,\,\mathbb{Z}^m=\pi_1\left(T^m\right)\ar[r]^-{f_\ast} & \pi_1\left(T^n\right)=\mathbb{Z}^n
}
\end{equation}
as in \ref{1.12}. In order to obtain the canonical homotopy pick two liftings \(\overline{f},\overline{f}_{L,f(0)} :\mathbb{R}^m\to\mathbb{R}^n\) such that \(\overline{f}(0)=\overline{f}_{L,f(0)}(0)\) and project the affine deformation \(t\,\overline{f}+(1-t)\,\overline{f}_{L,f(0)}\) back to the torus \(T^n\)).

Thus it suffices to consider only pairs of maps of the form \((f,f_0\equiv 0)\) where \(f\) has been straightened.

First we want to understand the pathspace
\begin{equation}\label{2.2}
 E(f,0)= \left\{\left(x,\theta\right)\in T^m\times PT^n\,|\, \theta(0)=f(x),\, \theta(1)=0\right\}
\end{equation}
and its projection \(\mathrm{pr}\) to \(T^m\) (cf. \ref{1.4}).
\begin{theorem}\label{2.3}
 Consider the map\[
f=L+v: T^m\to T^n,\quad m,n\geq 1,\]
where \(L\) is a homomorphism of tori (with linear lifting \(\overline{L}: \left(\mathbb{R}^m,\mathbb{Z}^m\right)\to\left(\mathbb{R}^n,\mathbb{Z}^n\right)\)) and \(v\in T^n\). Then
\begin{enumerate}[(i)]
 \item There is a homotopy equivalence\[
e\circ j: \coprod_{\left[\overline{u}\right]\in\mathbb{Z}^n\diagup\overline{L}\left(\mathbb{Z}^m\right)} \mathrm{ker}\,\overline{L}\diagup\left(\mathbb{Z}^m\cap\mathrm{ker}\,\overline{L}\right)\to E(f,0).\]
\item Assume that \(v\in L\left(T^m\right)\). Then this homotopy equivalence induces a bijection from the Reidemeister set \(\mathbb{Z}^n\diagup\overline{L}\left(\mathbb{Z}^m\right)\) onto the set \(\pi_0\left(E(f,0)\right)\) of pathcomponents of \(E(f,0)\). Let \(E'(f,0)\) denote the union of those pathcomponents of \(E(f,0)\) which correspond to elements in \(\left(\overline{L}\left(\mathbb{R}^m\right)\cap\mathbb{Z}^n\right)\diagup \overline{L}\left(\mathbb{Z}^m\right)\). Then the map\[
\widetilde{g}: C(f,0)\to E'(f,0)\]
which sends \(x\) to \(\left(x, \text{constant path at }0\right)\) is a homotopy equivalence. (In fact, \(\widetilde{g}\) is a homeomorphism onto a strong deformation retract of \(E'(f,0)\).) Clearly, when we compose \(\widetilde{g}\) with the projection \(\mathrm{pr}\) (compare \ref{1.4}), we obtain the inclusion of the coincidence manifold \(C(f,0)\) into \(T^m\).
\end{enumerate}
\end{theorem}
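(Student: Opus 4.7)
The plan is to work in the universal covers $q_m : \mathbb{R}^m \to T^m$ and $q_n : \mathbb{R}^n \to T^n$ and to parametrise $E(f,0)$ by the endpoint of a lifted path. Fix liftings $\overline{L}$ and $\overline{v}\in \mathbb{R}^n$ of $L$ and $v$, so that $\overline{f}:=\overline{L}+\overline{v}$ lifts $f$; in part~(ii) the hypothesis $v\in L(T^m)$ permits the stronger choice $\overline{v}\in \overline{L}(\mathbb{R}^m)$. For each $\overline{u}\in\mathbb{Z}^n$ set
\[
\widetilde{E}^{\overline{u}}:=\bigl\{(\overline{x},\overline{\theta})\in\mathbb{R}^m\times P(\mathbb{R}^n)\,\bigm|\,\overline{\theta}(0)=\overline{L}(\overline{x})+\overline{v},\ \overline{\theta}(1)=\overline{u}\bigr\}
\]
and send $(\overline{x},\overline{\theta})$ to $\bigl(q_m(\overline{x}),\,q_n\circ\overline{\theta}\bigr)\in E(f,0)$. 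A direct covering-space computation yields three facts at once: the image is a single pathcomponent of $E(f,0)$ which depends only on the class $[\overline{u}]\in \mathbb{Z}^n/\overline{L}(\mathbb{Z}^m)$; every pathcomponent arises in this way (any lift of a given $(x,\theta)$ produces some $\overline{u}\in\mathbb{Z}^n$); and two elements of $\widetilde{E}^{\overline{u}}$ map to the same point iff they differ by $(\overline{m},0)$ with $\overline{m}\in\mathbb{Z}^m\cap\ker\overline{L}$. Hence $\widetilde{E}^{\overline{u}}/(\mathbb{Z}^m\cap\ker\overline{L})$ is homeomorphic to the pathcomponent labelled $[\overline{u}]$, giving in particular the bijection $\mathbb{Z}^n/\overline{L}(\mathbb{Z}^m)\cong \pi_0(E(f,0))$ asserted in (ii).

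To finish part~(i), note that $\widetilde{E}^{\overline{u}}$ is contractible: the projection $(\overline{x},\overline{\theta})\mapsto \overline{x}$ has fibres that are path spaces between prescribed endpoints of the convex space $\mathbb{R}^n$. More usefully, fix a base point $\overline{x}_1\in\mathbb{R}^m$ and any path $\overline{\theta}_1$ from $\overline{L}(\overline{x}_1)+\overline{v}$ to $\overline{u}$; the affine map $\overline{y}\mapsto (\overline{x}_1+\overline{y},\overline{\theta}_1)$ realises $\ker\overline{L}$ as a deformation retract of $\widetilde{E}^{\overline{u}}$ (project the $\mathbb{R}^m$-component orthogonally onto $\overline{x}_1+\ker\overline{L}$ and linearly interpolate the path between $\overline{\theta}$ and $\overline{\theta}_1$). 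This retraction commutes with the translation action of $\mathbb{Z}^m\cap\ker\overline{L}$, so it descends to a homotopy equivalence $\ker\overline{L}/(\mathbb{Z}^m\cap\ker\overline{L}) \simeq \widetilde{E}^{\overline{u}}/(\mathbb{Z}^m\cap\ker\overline{L})$. Taking the disjoint union over $[\overline{u}]$ produces the map $e\circ j$ of part~(i).

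For part~(ii), the choice $\overline{v}\in \overline{L}(\mathbb{R}^m)$ makes $C(f,0)=L^{-1}(-v)$ a finite disjoint union of translates of the subtorus $\ker\overline{L}/(\mathbb{Z}^m\cap\ker\overline{L})$, indexed by the torsion subgroup $(\overline{L}(\mathbb{R}^m)\cap \mathbb{Z}^n)/\overline{L}(\mathbb{Z}^m)$ of $\mathbb{Z}^n/\overline{L}(\mathbb{Z}^m)$. The map $\widetilde{g}(x)=(x,\mathrm{const}_0)$ is a topological embedding, since the projection $\mathrm{pr}$ is a continuous left inverse; tracing through the covering description, a lift of $\widetilde{g}(x)$ has endpoint $\overline{u}=\overline{L}(\overline{x})+\overline{v}\in\mathbb{Z}^n\cap\overline{L}(\mathbb{R}^m)$, so its image lies in $E'(f,0)$. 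To exhibit this image as a strong deformation retract I would work one pathcomponent at a time: let $\overline{x}_0$ denote the orthogonal projection of $\overline{x}$ onto the affine subspace $\overline{L}^{-1}(\overline{u}-\overline{v})$, which is non-empty precisely because $[\overline{u}]$ is in the torsion subgroup, and set
\[
H_s(\overline{x},\overline{\theta}):=\bigl((1-s)\overline{x}+s\overline{x}_0,\,(1-s)\overline{\theta}+s\overline{u}\bigr).
\]
A short computation (the endpoint conditions become convex combinations of the $s=0$ and $s=1$ conditions) shows that $H_s\in\widetilde{E}^{\overline{u}}$ throughout, $H_1$ is a constant-path pair lying above $C(f,0)$, and $H$ is equivariant under $\mathbb{Z}^m\cap\ker\overline{L}$; it therefore descends to the required strong deformation retraction onto $\widetilde{g}(C(f,0))$.

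The conceptually delicate step is the bookkeeping of the first paragraph: carefully verifying that the transition from ``endpoint of a lifted path'' to ``pathcomponent of $E(f,0)$'' quotients out by exactly $\overline{L}(\mathbb{Z}^m)$, and, in part~(ii), pinpointing the torsion subgroup as the set of labels realised by $\widetilde{g}$. Once these identifications are in place, both the contractibility argument of part~(i) and the explicit retraction formula of part~(ii) reduce to straightforward consequences of the affine linearity of $f$.
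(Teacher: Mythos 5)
Your proof is correct and follows essentially the same route as the paper: both pass to the universal covers $q_m,q_n$, parametrise path components of $E(f,0)$ by the integer endpoint $\overline{u}$ of a lifted path (giving the quotient $\mathbb{Z}^n/\overline{L}(\mathbb{Z}^m)$), and then exploit that the lifted $\overline{f}=\overline{L}+\overline{v}$ is affine to produce straight-line retractions compatible with the $\mathbb{Z}^m\cap\ker\overline{L}$-action. The only cosmetic difference is that the paper first collapses the lifted path space to the discrete model $D_L=(\mathbb{R}^m\times\mathbb{Z}^n)/\!\sim$ by straightening paths, whereas you carry the full path spaces $\widetilde{E}^{\overline{u}}$ throughout and contract them at the end; the explicit retraction formula $H_s$ you give does exactly what the paper's deformation along the complement $K$ of $\ker\overline{L}$ does.
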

\begin{proof}
 Call two elements \(\left(\overline{x},\overline{u}\right)\) and \(\left(\overline{x}',\overline{u}'\right)\) of \(\mathbb{R}^m\times\mathbb{Z}^n\) \textit{equivalent} if \(\left(\overline{x}',\overline{u}'\right)= \left(\overline{x}+\overline{y},\overline{u}+\overline{L}\left(\overline{y}\right)\right)\) for some \(\overline{y}\in\mathbb{Z}^m\); let \(D_L\) denote the resulting quotient set.

Then there are homotopy equivalences 
\begin{equation}\label{2.4}
\xymatrix{
 D_L:=\left(\mathbb{R}^m\times\mathbb{Z}^n\right)\diagup\!\sim\ar@<0.5ex>[r]^-{e}\quad& \quad E(f,0) \ar@<0.5ex>[l]^-{\eta}
}
\end{equation}
which depend only on the choice of an element \(\overline{v}\in\mathbb{R}^n\) satisfying \(q_n\left(\overline{v}\right) = v\ \) (if \(v\in L\left(T^m\right)\) we will always choose \(\overline{v}\) to lie in \(\overline{L}\left(\mathbb{R}^m\right)\)). Here we define 
\begin{equation}\label{2.5}
 e\left(\left[\overline{x},\overline{u}\right]\right):= \left(q_m(\overline{x}),\, q_n \circ\left(\text{straight path from }\, \overline{L}\,\overline{x}+\overline{v}-\overline{u} \text{ to } 0\right)\right)
\end{equation}
(compare \ref{1.27}); in turn\[
\eta\left(q_m(\overline{x}),\theta\right) := \left[\overline{x}, \overline{L}\,\overline{x}+\overline{v}-\overline{\theta}(0)\right]\]
where \(\overline{\theta}\) lifts the path \(\theta\) to \(\mathbb{R}^n\) such that \(\overline{\theta}(1)=0\) (compare \ref{2.2}). Clearly \(\eta\circ e=\text{identity map id}\); moreover \(e\circ\eta \sim \mathrm{id}\) since every path in \(\mathbb{R}^n\) can be deformed linearly into the straight path with the same endpoints.

Next note that the obvious map (induced by the second projection)
\begin{equation}\label{2.6}
 r: D_L \to \mathbb{Z}^n\diagup\overline{L}\left(\mathbb{Z}^m\right),
\end{equation}
together with \(e\) and \(\eta\) (cf. \ref{2.4}), can be used to label he pathcomponents of \(E(f,0)\). Indeed, given \(\overline{u}\in\mathbb{Z}^n\), we have the homeomorphism
\begin{equation}\label{2.7}
\xymatrix{
j_{\overline{u}}: \mathbb{R}^m \diagup \left(\mathbb{Z}^m\cap\mathrm{ker}\,\overline{L}\right) \ar[r]^-{\cong}
 & r^{-1} \left\{\left[\overline{u}\right]\right\}
}
\end{equation}
which maps \(\left[\overline{x}\right]\) to \(\left[\overline{x},\overline{u}\right]\). Composed with the obvious inclusion this yields the homotopy equivalence \[
\xymatrix{
j_{\overline{u}}|: \left(\mathrm{ker}\,\overline{L}\right)\diagup \left(\mathbb{Z}^m\cap\mathrm{ker}\,\overline{L}\right) \ar[r]^-{\sim} & r^{-1} \left\{\left[\overline{u}\right]\right\}
}\]
of pathconnected spaces. If we pick a representative \(\overline{u}\) for every class \(\left[\overline{u}\right] \in \mathbb{Z}^n\diagup\overline{L}\left(\mathbb{Z}^m\right)\) we obtain the homotopy equivalence \(\ \displaystyle j:=\coprod_{\left[\overline{u}\right]}j_{\overline{u}}|\ \) which, when composed with \(e\) (cf. \ref{2.5}), leads to the first claim of our theorem.

For the rest of the proof consider the special case where \(v\) happens to lie in the image of \(L\) and where we choose \(\overline{v}\) in \(\overline{L}\left(\mathbb{R}^m\right)\). Pick \(\overline{x}_0\in\mathbb{R}^m\) such that \(\overline{L}\left(\overline{x}_0\right)=-\overline{v}\). Thus the corresponding point \(x_0:=q_m\left(\overline{x}_0\right)\in T^m\) lies in the coincidence set \(C(f,0)\). The fiber inclusion\[
\mathrm{pr}^{-1}\left(\left\{x_0\right\}\right) = \left\{\left(x_0,\, \text{loops at } 0\right)\right\} \,\,\subset \,\,E(f,0),\]
together with \(r\circ\eta\) (cf. \ref{2.4} and \ref{2.6}) induces the composite map\[
\xymatrix{
\mathbb{Z}^n=\pi_1\left(T^n;0\right)\ar[r] & \pi_0\left(E(f,0)\right) \ar[r]^-{\left(r\circ\eta\right)_\ast} & \mathbb{Z}^n\diagup\overline{L}\left(\mathbb{Z}^m\right)}
\]
which takes \(\overline{u}\in\mathbb{Z}^n\) to the class \(\left[\overline{u}\right]\) in the quotient group. This shows that \(r\circ\eta\) yields the usual identification of \(\pi_0\left(E(f,0)\right)\) with the Reidemeister set \(R\left(f,0;x_0\right)=\mathbb{Z}^n\diagup\overline{L}\left(\mathbb{Z}^m\right)\) (cf. [Ko2], 2.1). It follows from the definition of \(\eta\) that the bijection \(\left(r\circ\eta\right)_\ast\ \) is independent of the choice of \(\overline{x}_0\).

Furthermore since \[
f \circ q_m = q_n \circ\left(\overline{L}+\overline{v}\right): \mathbb{R}^m\to T^n\]
we have \[
C(f,0) = q_m\left(\overline{L}^{-1}\left(\mathbb{Z}^n-\overline{v}\right)\right) =\bigcup_{\overline{u}\in\mathbb{Z}^n\cap\overline{L}\left(\mathbb{R}^m\right)} q_m\left(\overline{L}^{-1} \left(\left\{\overline{u}-\overline{v}\right\}\right)\right).\]
Given elements \(\overline{u},\overline{u}'\in \mathbb{Z}^n\cap\overline{L}\left(\mathbb{R}^m\right)\), their contributions to this union are equal or disjoint according as they agree modulo \(\overline{L}\left(\mathbb{Z}^m\right)\) or not.

Now choose a direct sum decomposition \(\mathbb{R}^m =K\oplus\mathrm{ker}\,\overline{L}\) and use it in order to deform \(\mathbb{R}^m\) along \(K\) onto any given affine subspace of the form \(\overline{L}^{-1} \left(\left\{\overline{u}-\overline{v}\right\}\right)\) where \(\,\overline{u}\,\in\,\mathbb{Z}^n\cap\overline{L}\left(\mathbb{R}^m\right)\). The resulting deformations are compatible with the translation by any vector \(\overline{y}\in\mathbb{Z}^m\). Moreover\[
j_{\overline{u}+\overline{L}\left(\overline{y}\right)} \left(\left[\overline{x}+\overline{y}\right]\right) = \left[\overline{x}+\overline{y},\overline{u}+\overline{L}\,\overline{y}\right]= \left[\overline{x},\overline{u}\right]= j_{\overline{u}}\left(\left[\overline{x}\right]\right)\]
for all \(\overline{x}\in\mathbb{R}^m\) (compare \ref{2.4} and \ref{2.7}). It follows that \[
q_m\left(\overline{L}^{-1} \left(\left\{\overline{u}-\overline{v}\right\}\right)\right)\quad\cong\quad \overline{L}^{-1} \left(\left\{\overline{u}-\overline{v}\right\}\right)\diagup \left(\mathbb{Z}^m\cap\mathrm{ker}\,\overline{L}\right)\]
gets mapped homomorphically, via \(\ j_{\overline{u}}\ \) (and \(\ e\circ j_{\overline{u}}\ \), resp.), onto a strong deformation retract of the pathcomponent, labelled by \(\left[\overline{u}\right]\), of \(D_L\) (and of \(E(f,0)\), resp.). The deformation is independent of the representative \(\overline{u}\) of the class \(\left[\overline{u}\right]\in \mathbb{Z}^n\diagup\overline{L}\left(\mathbb{Z}^m\right)\); it depends only on the choice of the complement \(K\) of \(\mathrm{ker}\,\overline{L}\) in \(\mathbb{R}^m\). Clearly \(e\circ j_{\overline{u}}\left(\left[\overline{x}\right]\right) =\widetilde{g} \left(q_m(\overline{x}\right)\) for all \(\overline{x}\in\overline{L}^{-1} \left(\left\{\overline{u}-\overline{v}\right\}\right)\) (cf. \ref{2.5} and \ref{2.7}). This completes the proof of theorem \ref{2.3}.
\end{proof}
The next result, together with the discussion of \ref{2.2}, proves theorem \ref{1.13} of the introduction in the case \(B=\{\text{point}\}\).
\begin{corollary}\label{2.8}
 We may choose integer vectors \(\,\overline{u}_1,\ldots,\overline{u}_n\in\mathbb{Z}^n\) which generate \(\overline{L}\left(\mathbb{Z}^m\right)\). Then\[
\mathrm{MCC}(f,0)=\mathrm{N}(f,0)=\left|\det\left(\overline{u}_1,\ldots,\overline{u}_n\right)\right|;\]
this number vanishes (or, equivalently, the pair \((f,0)\) is ''loose``, cf. [Ko2], definition 1.1) if and only if \(\,\overline{L}: \mathbb{R}^m\to\mathbb{R}^n\) is not surjective.

Moreover\[
\mathrm{MC}(f,0)=
\begin{cases}
 \mathrm{N}(f,0)&\text{if }\, \mathrm{N}(f,0)=0 \,\text{ or }\, m=n;\\
\infty &\text{else};
\end{cases}
\]
and \[
\#\mathrm{R}(f,0)=\#\left(\mathbb{Z}^n\diagup\overline{L}\left(\mathbb{Z}^m\right)\right) =
\begin{cases}
 \mathrm{N}(f,0)&\text{if }\, \mathrm{N}(f,0)\neq 0;\\
\infty &\text{if }\, \mathrm{N}(f,0)= 0.
\end{cases}
\]
\end{corollary}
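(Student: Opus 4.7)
The plan is to straighten $f$ to a Lie group homomorphism and split into two cases according to whether $\overline{L}:\mathbb{R}^m\to\mathbb{R}^n$ is surjective. Since $T^n$ is a $K(\mathbb{Z}^n,1)$, the homotopy class of $f$ is determined by $\overline{L}$ alone, so I may homotope $v$ to $0$ and assume $f=L$ is a homomorphism with $C(f,0)=\ker L$. Choosing $\overline{v}=0\in\overline{L}(\mathbb{R}^m)$ in Theorem \ref{2.3}(ii) then identifies $\mathrm{R}(f,0)\cong\mathbb{Z}^n/\overline{L}(\mathbb{Z}^m)$ and presents $\widetilde{g}$ as a homotopy equivalence from $C(f,0)$ onto $E'(f,0)\subset E(f,0)$.

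If $\overline{L}$ is not surjective, then $L(T^m)$ is a proper closed subgroup of $T^n$; choosing any $w\in T^n\setminus L(T^m)$ and deforming $L$ to $L+w$ produces an empty coincidence set, so $\mathrm{MC}(f,0)=\mathrm{MCC}(f,0)=\mathrm{N}(f,0)=0$. Elementary lattice considerations then give $|\det(\overline{u}_1,\ldots,\overline{u}_n)|=0$ (any $n$ integer vectors spanning a subspace of dimension $<n$ are linearly dependent) and $\#(\mathbb{Z}^n/\overline{L}(\mathbb{Z}^m))=\infty$ (the cokernel has positive rank), matching all claimed formulas in this case.

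If instead $\overline{L}$ is surjective, then $\overline{L}(\mathbb{Z}^m)\subset\mathbb{Z}^n$ has full rank and index $|\det(\overline{u}_1,\ldots,\overline{u}_n)|$, so $\#\mathrm{R}(f,0)=|\det|<\infty$. The kernel $\ker L$ is a closed Lie subgroup of dimension $m-n$; from the long exact homotopy sequence of the fibration $\ker L\hookrightarrow T^m\to T^n$ one reads off $\pi_0(\ker L)\cong\mathbb{Z}^n/\overline{L}(\mathbb{Z}^m)$, so $C(f,0)$ is a disjoint union of exactly $|\det|$ copies of $T^{m-n}$. Since $\overline{L}(\mathbb{R}^m)=\mathbb{R}^n$ we have $E'(f,0)=E(f,0)$, and the restriction $\widetilde{g}|C_Q$ is a homotopy equivalence onto each corresponding pathcomponent $Q$.

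The main obstacle will be verifying essentiality of every Nielsen class $Q$ in this surjective case. For this I apply the mod $2$ Hurewicz homomorphism $\mu_2$ to $[C_Q,\widetilde{g}|C_Q,\overline{g}|]$, obtaining $\widetilde{g}_{\ast}[C_Q]\in H_{m-n}(Q;\mathbb{Z}_2)$. Because $\widetilde{g}|$ is a homotopy equivalence and $C_Q\cong T^{m-n}$ has nonvanishing top mod $2$ fundamental class, this image generates $H_{m-n}(Q;\mathbb{Z}_2)\cong\mathbb{Z}_2$; hence the original normal bordism class is nontrivial and $Q$ is essential. Combining the steps gives $\mathrm{N}(f,0)=\#\mathrm{R}(f,0)=|\det|$, and since $L$ itself already realizes precisely this number of coincidence components, $\mathrm{MCC}(f,0)$ attains its lower bound $\mathrm{N}(f,0)$. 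The dichotomy for $\mathrm{MC}(f,0)$ is then immediate: when $m=n$ each component $T^0$ is a single point, whereas when $m>n$ each component is a positive-dimensional torus contributing infinitely many points. Everything except the essentiality step is routine lattice theory, Lie group theory for $\ker L$, and an explicit deformation.
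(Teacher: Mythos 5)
Your overall strategy mirrors the paper's: straighten $f$ to a homomorphism, split on whether $\overline{L}$ is surjective, use Theorem~\ref{2.3}(ii) to see $\widetilde{g}$ as a homotopy equivalence, and then detect essentiality of each pathcomponent by $\mu_2$. The non-surjective case, the Reidemeister count, the identification of $C(f,0)$ as $|\det|$ copies of $T^{m-n}$, and the essentiality argument are all correct and match what the paper does.

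The gap is in your final step, the dichotomy for $\mathrm{MC}(f,0)$ when $m>n$ and $\mathrm{N}(f,0)\neq 0$. You write that in this case ``each component is a positive-dimensional torus contributing infinitely many points,'' but that is an observation about the \emph{specific} representative $(L,0)$, not about an arbitrary pair homotopic to it. $\mathrm{MC}(f,0)$ is an infimum over the entire homotopy class, and $\mathrm{MCC}(f,0)>0$ only forces every representative's coincidence set to be nonempty, not infinite. It is perfectly conceivable a priori that some homotopic (but non-transverse) pair $(f',f'')$ has only finitely many coincidence points even though $m-n>0$. To exclude this you must appeal to the homotopy invariance of the invariant $\mu_2(\widetilde{\omega})\in H_{m-n}(E;\mathbb{Z}_2)$ ([Ko2], 3.2--3.3): for $(L,0)$ you computed $\mu_2(\widetilde{\omega}(f,0))=(1,\ldots,1)\neq 0$, whereas for any pair $(f',f'')$ with a \emph{finite} coincidence set the generic coincidence data can be taken ``small'' (contained in tiny balls around the finitely many points, with $\widetilde{g}$ nearly constant), which forces $\mu_2(\widetilde{\omega}(f',f''))=0$ once $m-n>0$. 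The contradiction shows $\mathrm{MC}(f,0)=\infty$. Without this invariance argument, your proof of that one case does not go through; with it, the argument is complete and coincides with the paper's.
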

\begin{proof}
 If \(\overline{L}\left(\mathbb{R}^m\right)\neq \mathbb{R}^n\) we may move \(v\) slightly away from \(L\left(T^m\right)\) so that \(C(f,0)=\left(L+v\right)^{-1}(0)\) is empty. Then the pair \((f,0)\) is loose and its Nielsen and minimum numbers vanish. Moreover \(\mathbb{Z}^n\diagup\overline{L}\left(\mathbb{Z}^m\right)\) contains an infinite factor (compare \ref{2.3}(ii)).

Thus assume that the rank of \(\overline{L}\) is \(n\). Then the map \((f,0):T^m\to T^n\times T^n\) is smooth and transverse to the diagonal \(\triangle\) as required in \ref{1.3} and \ref{1.4} (see also [Ko2]). Moreover, since the coincidence datum\[
\widetilde{g}: C(f,0)\to E'(f,0) =E(f,0)\]
(cf. \ref{1.4}) is a homotopy equivalence (cf. \ref{2.3}) each element in the geometric Reidemeister set \(\pi_0\left(E(f,0)\right)\) corresponds to an essential and pathconnected part of the coincidence manifold \(C(f,0)=f^{-1}(0)\). (Clearly, essentiality is detected here by \((m-n)\)-dimensional \(\mathrm{mod\, 2}\) homology.) Thus the pair \((f,0)\) realizes - within its homotopy class - the minimum number of coincidence components (and, if \(m=n\), also of coincidence points) and this agrees with the Nielsen and Reidemeister numbers.

Suppose \(\mathrm{MC}(f,0)\) is finite, i.e. there exists a homotopy from \((f,0)\) to some pair of maps \((f',f'')\) which have only finitely many coincidence points. According to ([Ko2], 3.2, 3.3 and the discussion following \ref{4.4}) such a homotopy induces an isomorphism between the framed bordism groups (and the homology groups with coefficients in \(\mathbb{Z}_2\), resp.) of \(E(f,0)\) and of \(\ E(f',f'')\) which preserve the \(\widetilde{\omega}\)-invariants (and \(\mu_2\left(\widetilde{\omega}\right)\), cf. \ref{1.20}, resp.). This cannot happen if \(m>n\). Indeed, each component of \(C(f,0)\) is an \((m-n)\)-dimensional torus and in view of theorem \ref{2.3}(ii) above we have\[
\mu_2\left(\widetilde{\omega}(f,0)\right)=(1,\ldots,1) \in H_{m-n} \left(E(f,0);\mathbb{Z}_2\right) = \bigoplus_{\pi_0\left(E(f,0)\right)}\mathbb{Z}_2.\]
In contrast \(\widetilde{\omega}(f',f'')\) can be represented by ``small'' generic coincidence manifolds near the finite subset \(C(f',f'')\) of \(T^m\), together with ``small'' (i.e. nearly constant) maps; thus \(\mu_2\left(\widetilde{\omega}(f',f'')\right)=0.\)

Since \(\overline{L}\left(\mathbb{Z}^m\right)\) is a subgroup of \(\mathbb{Z}^n\) there exists a system \(\left\{\overline{u}_1,\ldots,\overline{u}_n\right\}\) of generators of \(\overline{L}\left(\mathbb{Z}^m\right)\). The resulting \(n\times n\)-matrix \(\overline{U}\) has a trivial determinant if \(\mathrm{rank}\,\overline{L}<n\). Otherwise \(\left|\det\left(\overline{U}\right)\right|\) counts the points \(\overline{u}\in\mathbb{Z}^n\) which lie in the paralleliped \(\left\{\overline{u}=\sum t_i\,\overline{u}_i\,|\, 0\leq t_i< 1,\,i=1,\ldots, n\right\}\); but these points form a system of representatives of the classes \(\left[\overline{u}\right]\in\mathbb{Z}^n\diagup\overline{L}\left(\mathbb{Z}^m\right)\) (compare also \ref{1.13}(ii)).
\end{proof}

\begin{example}[\( m=n=2 \)]\label{2.9}
For all selfmaps \(\ f\ \) of the \(2\)-dimensional torus the fixed points of \(\ f\ \) are the coincidence points of the pair \(\ (f-\mathrm{id},0)\). 

Therefore
\[
 \mathrm{MF}(f)\quad=\quad\mathrm{MC}(f,\mathrm{id})\quad=\quad \mathrm{N}(f,\mathrm{id})\quad =\quad\left|\det\left(f_{\ast}-\mathrm{id}\right)\right|.
\]
This is one of the earliest results of topological fixed point theory (cf. [Brou], top of p. 95).
 
\end{example}

%--------------------------------------------------------------------
% ----------------- section 3 -------------------------------------
%-------------------------------------------------------------------

\section{Straightening fiberwise maps in linear torus bundles}\label{sec3}
For the remainder of this paper let \(M\) and \(N\) be linear torus bundles with fiber dimensions \(m\) and \(n\), resp., over a smooth closed connected manifold \(B\) of dimension \(b\). Any fiberwise map \(f:M\to N\) determines the section \(s_f:=f\circ s_{0 M}\) of \(p_N\) (the image of the zero section of \(p_M\)) and the fiberwise constant map \(s_f\circ p_M\).
\begin{proposition}\label{3.1}
 The following operations do not change the Nielsen, Reidemeister or minimal numbers \(\mathrm{MCC}_B\) and \(\mathrm{MC}_B\).
\begin{enumerate}[(i)]
 \item Replacing a pair \((f_1,f_2)\) of fiberwise maps from \(M\) to \(N\) by the pair which consists of \(\,f:=f_1-f_2\,\) and \(\,f_0:=s_{0 N}\circ p_M\).
\item Replacing \(f\) by a map \(f'\) which is fiberwise homotopic to \(f\). This can be done e.g. by ``straightening'' \(f\) fiberwise (cf. section \ref{sec2}) while keeping \(s_f=f\circ s_{0 M}\) unchanged; when restricted to any fiber, then \(f'-s_f\circ p_M\) is a group homomorphism of tori. Also any deformation of the section \(s_f\) determines a fiberwise homotopy of straightened maps.
\item Composing \(f\) with isomorphisms of linear torus bundles.
\end{enumerate}
\end{proposition}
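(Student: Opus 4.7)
The three assertions are treated separately and all rely on two ingredients: the fiberwise abelian group structure on $N$, and the fact (recorded just after Definition \ref{1.8}) that $\mathrm{N}_B$, $\#\mathrm{R}_B$, $\mathrm{MCC}_B$ and $\mathrm{MC}_B$ are known to depend only on the fiberwise homotopy classes of $f_1$ and $f_2$.

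For (i) I would first note the set-theoretic equality $C(f_1,f_2)=C(f_1-f_2,f_0)$ as subspaces of $M$, and observe that the assignment $(g_1,g_2)\mapsto(g_1-g_2,f_0)$ is a bijection on fiberwise homotopy classes (its inverse, up to fiberwise homotopy, is $(h_1,h_2)\mapsto(h_1+f_2,f_2)$). This already yields the invariance of $\mathrm{MCC}_B$ and $\mathrm{MC}_B$. For $\mathrm{N}_B$ and $\#\mathrm{R}_B$ I would exhibit the explicit homeomorphism
\[
\Psi\colon E_B(f_1,f_2)\xrightarrow{\ \cong\ } E_B(f_1-f_2,f_0),\qquad (x,\theta)\longmapsto\bigl(x,\ \theta(\cdot)-f_2(x)\bigr),
\]
where the subtraction is performed in the fiber of $p_N$ over $p_M(x)$. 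One checks directly that $\Psi$ commutes with the projection $\mathrm{pr}$ to $M$, sends the lifted inclusion $\widetilde g$ of one pair to that of the other, and respects the virtual bundles of \eqref{1.5}--\eqref{1.6}; hence it carries the coincidence datum $\widetilde\omega_B(f_1,f_2)$ to $\widetilde\omega_B(f_1-f_2,f_0)$, preserving pathcomponents and essentiality.

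Assertion (ii) is an instance of the fiberwise homotopy invariance just cited, so the only content is to exhibit the specific straightening homotopy. Writing $f=(f-s_f\circ p_M)+s_f\circ p_M$, the first summand is a fiberwise pointed map of tori; on each fiber I would lift it to $\mathbb{R}^m\to\mathbb{R}^n$ and apply the canonical affine homotopy of Section \ref{sec2} to deform it to the Lie group homomorphism determined by $\overline{L}$ (cf.~\eqref{1.12}). The one point that really requires checking---the only mild obstacle in the whole proposition---is that these pointwise affine deformations assemble into a globally defined fiberwise homotopy. This holds because the structure groups $\mathrm{GL}(m,\mathbb{Z})$ and $\mathrm{GL}(n,\mathbb{Z})$ act by $\mathbb{R}$-linear maps and therefore commute with affine interpolation of liftings; the fiberwise deformation is thus equivariant under the transition functions of both bundles and patches together. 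The final clause about deformations of $s_f$ is then immediate: any homotopy of sections $s_t$ yields the fiberwise homotopy $t\mapsto (\text{straightened part})+s_t\circ p_M$ between straightened maps.

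For (iii) I would appeal to the manifest naturality of the whole construction. Given bundle isomorphisms $\phi_M\colon M\xrightarrow{\cong}M'$ and $\phi_N\colon N\xrightarrow{\cong}N'$ over $B$, the rule $(x,\theta)\mapsto(\phi_M(x),\phi_N\circ\theta)$ defines a homeomorphism of pathspaces covering the diffeomorphism $\phi_M$ of coincidence manifolds; moreover $\phi_M$ and $\phi_N$ induce canonical isomorphisms of the virtual bundles $f_1^\ast(TN)$, $TM$ and $p_M^\ast(TB)$ appearing in \eqref{1.6}. Consequently the full coincidence datum is transported bijectively and all four invariants are preserved.
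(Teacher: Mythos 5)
Your proposal is correct and follows essentially the same route as the paper's proof, which likewise observes that each operation induces a (fiberwise) homotopy equivalence or homeomorphism of the pathspaces $E_B(\cdot,\cdot)$ together with a compatible isomorphism of normal bordism groups carrying $\widetilde{\omega}_B$ to $\widetilde{\omega}_B$, and uses the key fact that the coefficient bundle $\varphi$ is a pullback from $B$ depending only on $M$ and $N$. You merely spell out the homeomorphism $\Psi$ in case (i) and the gluing compatibility in case (ii) more explicitly than the paper does.
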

It follows that we have to prove theorems \ref{1.13}, \ref{1.18},\ref{1.22neu} and \ref{1.21} only for pairs of the form \((f,f_0)\) where \(f\) is already ``straightened''.
\begin{proof}
 Each of these operations induces a fiberwise homotopy equivalence (or even a homeomorphism) between the pathspaces \(E_B(\cdot,\cdot)\) of the pairs in question as well as an isomorphism between their normal bordism groups which preserves the \(\widetilde{\omega}\)-invariants (compare sections 3 and 4 in [Ko2]). Note that here the coefficient bundle \(\varphi\) (cf. \ref{1.6}) is a pullback of a virtual vector bundle over \(B\) which depends only on \(M\) and \(N\) (and not on \(f_1\)).
\end{proof}
\begin{proposition}\label{3.2}
 Let \(f:M\to N\) be a fiberwise map between \textnormal{trivial} linear torus bundles and assume that the section \(s_f=f\circ s_{0 M}\) is fiberwise homotopic to the zero section \(s_{0 N}\) of \(N\). Let \(f|, 0:T^m\to T^n\) denote the restrictions of \(f,f_0\) to the fibers over some basepoint \(\ast\) of \(B\). 

Then the pairs \((f,f_0)\) and \((f|,0)\) have the same Nielsen, Reidemeister and minimum numbers \(\mathrm{MCC}_B\) (which are therefore known by corollary \ref{2.8}). Moreover we have\[
\mathrm{MC}_B(f,f_0)=
\begin{cases}
 \mathrm{N}_B(f,f_0)&\text{if }\,\mathrm{N}_B(f,f_0)=0 \,\text{ or }\, b=m-n=0;\\
\infty&\text{else}.
\end{cases}
\]

If \(\,\overline{L}\left(\mathbb{R}^m\right)\,=\,\mathbb{R}^n\,\) (cf. \ref{1.12}) and \(\,Q\in\,\pi_0\left(E_B(f,f_0)\right)\,\), then the \(\mathrm{mod\,2}\) homology class \(\,\widetilde{g}_\ast\left(\left[C_Q\right]\right)\,\) (cf. \ref{1.8} and \ref{1.21}) does not vanish and hence \(Q\) is essential.
\end{proposition}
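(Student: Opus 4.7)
The plan is to apply Proposition~\ref{3.1} to put $f$ in a globally product form, after which the proposition reduces to the torus case already treated in Section~\ref{sec2}. First I would fix trivializations $M=B\times T^m$ and $N=B\times T^n$, so that $f_0$ becomes the projection $(b,x)\mapsto(b,0)$. The hypothesis that $s_f$ is fiberwise homotopic to $s_{0N}$ allows me, via Proposition~\ref{3.1}(ii), to deform $f$ so that $s_f=s_{0N}$, i.e.\ $f(b,0)=(b,0)$ for every $b\in B$. Straightening $f$ fiberwise (again by Proposition~\ref{3.1}(ii)) leaves this condition intact and makes the restriction of $f$ to each fiber a group homomorphism of tori; since the induced map on $\pi_1(T^m)=\mathbb{Z}^m$ is locally constant on the connected base, this homomorphism is one and the same $f|$ on every fiber. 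The upshot is $f(b,x)=(b,f|(x))$, a genuine global product.

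With $f$ in this form the relevant spaces decompose as products,
\begin{equation*}
  C(f,f_0)=B\times C(f|,0),\qquad E_B(f,f_0)=B\times E(f|,0),
\end{equation*}
and the canonical datum factors as $\widetilde g=\mathrm{id}_B\times\widetilde g'$, where $\widetilde g'$ is the map of Theorem~\ref{2.3}. Because $B$ is connected, $\pi_0(E_B(f,f_0))=\pi_0(E(f|,0))$, so each pathcomponent $Q$ of $E_B(f,f_0)$ has the form $B\times Q'$ with $C_Q=B\times C_{Q'}$. This immediately matches the Reidemeister numbers of $(f,f_0)$ and $(f|,0)$ and reduces the equality of $\mathrm{MCC}_B$ and $\mathrm{N}_B$ with $\mathrm{MCC}(f|,0)=\mathrm{N}(f|,0)$ to Corollary~\ref{2.8}.

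For the final essentiality claim I would assume $\overline L(\mathbb{R}^m)=\mathbb{R}^n$. The reduction above has arranged that $v=0\in L(T^m)$, so Theorem~\ref{2.3}(ii) applies with $E'(f|,0)=E(f|,0)$: on every pathcomponent $\widetilde g'$ is a homotopy equivalence. Taking the product with $\mathrm{id}_B$ preserves this, so $\widetilde g\colon C_Q\to Q$ is also a homotopy equivalence, hence
\begin{equation*}
  \widetilde g_\ast([C_Q])\;\neq\;0\quad\text{in }H_{m+b-n}(Q;\mathbb{Z}_2).
\end{equation*}
Since $\mu_2$ of the restricted $\widetilde\omega$-class factors through this pushforward, the normal bordism class is nontrivial and $Q$ is essential. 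When $\overline L$ is not surjective, a small perturbation of the section $s_f$ (again by Proposition~\ref{3.1}(ii)) renders $C(f|,0)$ empty, so $\mathrm{N}_B=\mathrm{MCC}_B=0$.

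It remains to settle the value of $\mathrm{MC}_B(f,f_0)$. If $\mathrm{N}_B=0$ there is nothing to do. Otherwise each essential component $B\times C_{Q'}$ is a closed manifold of dimension $b+(m-n)$; when $b=m-n=0$ this is a finite set of points and Corollary~\ref{2.8} delivers the formula, while in every other case the dimension is strictly positive. In this latter situation a homotopy of $(f,f_0)$ to a pair with only finitely many coincidence points would, by the invariance of $\mu_2(\widetilde\omega_B)$ under homotopy used at the end of the proof of Corollary~\ref{2.8} and proved in [Ko2], force $\mu_2(\widetilde\omega_B(f,f_0))=0$ -- contradicting the nonvanishing established above -- so $\mathrm{MC}_B(f,f_0)=\infty$. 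I expect the main obstacle to lie in the first step: verifying that the fiberwise straightening of Proposition~\ref{3.1} really produces a \emph{globally} constant $f|$ rather than a family varying continuously with $b$; once this is in hand all remaining steps are straightforward product-space extensions of the arguments in Section~\ref{sec2}.
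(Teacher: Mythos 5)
Your proof is correct and follows essentially the same route as the paper: reduce to the product form $f=L\times\mathrm{id}$ via Proposition \ref{3.1} (your worry about whether the fiberwise homomorphism $f|$ is genuinely global is settled exactly as you sketch, by local constancy of the induced map on $\pi_1(T^m)$ over the connected base $B$), then exploit the resulting product decompositions $C(f,f_0)=B\times C(f|,0)$, $E_B(f,f_0)\cong B\times E(f|,0)$, $\widetilde g=\mathrm{id}_B\times\widetilde g'$, and apply Theorem \ref{2.3}(ii) and Corollary \ref{2.8}. The only small divergence is in showing $\mathrm{MC}_B=\infty$ when $\mathrm{N}_B\neq 0$ and $b\geq 1$: the paper simply observes that each Nielsen class projects onto $B$ (so any fiberwise homotopic pair has a coincidence over every point of $B$), whereas you invoke the nonvanishing of $\mu_2(\widetilde\omega_B)$ in positive dimension; both are valid.
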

\begin{proof}
 In view of proposition \ref{3.1} we need to consider only the case where \[
f=L\times \mathrm{id}: M=T^m\times B \,\to \,N=T^n\times B\]
and \(L\) is a Lie group homomorphism. Then each pathcomponent of \(E_B(f,f_0)\) contains an element of the form \(\left(x_0,\theta\right)\) where \(\theta\) is a closed loop at \(x_0=0\) in the fiber \(T^n\) over \(\ast\in B\). This inclusion induces a bijection from the orbit set of the Reidemeister operation of \(\pi_1(M)=\pi_1(B)\oplus\mathbb{Z}^m\) on \(\pi_1\left(T^n\right)=\mathbb{Z}^n\) onto the geometric Reidemeister set \(\pi_0\left(E_B(f,f_0)\right)\). Since \(s_f\equiv 0\) the factor \(\pi_1(B)\) acts trivially on \(\mathbb{Z}^n\) and we can identify the Reidemeister sets both of \((f,f_0)\) and of \((f|,0)\) with the quotient group \(\mathbb{Z}^n\diagup\overline{L}\left(\mathbb{Z}^m\right)\).

If the linear lifting \(\overline{L}:\mathbb{R}^m\to\mathbb{R}^n\) of \(L\) is surjective we have only essential Reidemeister classes and each Nielsen class \(C_Q\) is connected (being the product of an affine subtorus of \(T^m\) with \(B\) or \(\{\ast\}\), cf. corollary \ref{2.8} and its proof). Thus \[
\mathrm{MCC}_B=\mathrm{N}_B=\#\mathrm{R}_B=\#\left(\mathbb{Z}\diagup\overline{L}\left(\mathbb{Z}^m\right)\right)
\]
both for \((f,f_0)\) and \((f|,0)\). Moreover \(\mathrm{MC}_B(f,f_0)\) is infinite whenever \(\mathrm{N}(f,f_0)\neq 0\) and \(B\neq \{\ast\}\) since then each Nielsen class must project \textit{onto} \(B\).

If \(\overline{L}\) (and hence \(L\)) is not surjective then \(f\) can be pushed away from \(f_0(M)=\{0\}\times B\) and\[
\mathrm{MC}_B(f,f_0)=\mathrm{MCC}_B(f,f_0)=\mathrm{N}_B(f,f_0) =0\]
but \(\mathrm{R}_B(f,f_0)=\mathbb{Z}^n\diagup\overline{L}\left(\mathbb{Z}^m\right)\) is infinite.
\end{proof}
Propositions \ref{3.1} and \ref{3.2} imply the claims of theorems \ref{1.13}, \ref{1.22neu} and \ref{1.21} as far as they concern the case \(B=S^b,\,b\geq 2\). Indeed, here the fiber bundles \(M\) and \(N\) must be trivial since their gluing maps from the (connected!) equator \(S^{b-1}\) into the discrete structure group \(\mathrm{GL}\left(k,\mathbb{Z}\right)\), \(k=m\) or \(n\), are constant. Thus any section corresponds to a (nullhomotopic!) map from \(S^b\) to the torus \(T^k\).

Furthermore if \(B=S^1\) and \(\mathrm{dim}\left(\overline{L}\left(\mathbb{R}^m\right)\right)\leq n-2\), the claims of theorems \ref{1.13} and \ref{1.18} follow from a simple transversality argument. Indeed, we may deform \(s_f\) into a section of \(p_N\) which does not intersect the (at least \(2\)-codimensional) image of the straightened fiberwise map determined by \(\overline{L}\) and \(s_{0 N}\). This yields a homotopy which moves \(f\) entirely away from \(f_0\). Thus the Nielsen and minimum numbers of the pair \((f,f_0)\) vanish.

The remaining claims of theorems \ref{1.13}, \ref{1.18} and \ref{1.22neu} will be established in sections \ref{sec4} and \ref{sec6} below.\(\hfill\Box\)

%--------------------------------------------------------------------
% ----------------- section 4 -------------------------------------
%-------------------------------------------------------------------

\section{Torus bundles over \(S^1\), and related pathspaces}\label{sec4}
In this section we classify fiberwise maps \(f_1,f_2\) between linear torus bundles over the circle and deduce simple descriptions of the homotopy type of the pathspace \(E_B(f_1,f_2)\) which is of central importance in Nielsen theory. Surprisingly, the coincidence map \(\widetilde{g}\) (cf. \ref{1.4}) turns out to yield homotopy equivalences in a significant number of cases.

Given \(m,n\geq 0\), fix invertible matrices \(\overline{A}_M\in\mathrm{GL}\left(m,\mathbb{Z}\right)\) and \(\overline{A}_N\in\mathrm{GL}\left(n,\mathbb{Z}\right)\) with integer coefficients and consider the resulting linear torus bundles \(M\) and \(N\) (cf. \ref{1.14}) over the base manifold \(B=I\diagup 0\sim 1\) (which we identify with the unit circle \(S^1\) via \([t]\leftrightarrow e^{2\pi it}\)). 

For every integer vector \(\overline{v}\in\mathbb{Z}^n\) we define the section
\begin{equation}\label{4.1}
\xymatrix@=1.2cm{
 s_{\overline{v}}\,\,:\,\, S^1 \,\ar[r] &\, N}
\end{equation}
of \(p_N\) (cf. \ref{1.2}) by \[
s_{\overline{v}}\left([t]\right)=\left[q_n\left(t\,\overline{v}\right),t\right],\quad t\in I.\]
\begin{proposition}\label{4.2}
 The assignment\[
\xymatrix@=1.2cm{
\overline{v}\,\; \ar[r] & \;\,s_{\overline{v}}}\]
induces a group isomorphism \(\,\sigma\,\) from  \(\,\,\mathbb{Z}^n\diagup\! \left(\overline{A}_N-\mathrm{id}\right)\left(\mathbb{Z}^n\right)\,\,\)  onto the group of homotopy classes of sections of \(p_N\).
\end{proposition}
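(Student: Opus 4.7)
My plan is to use the abelian-group structure on sections of $p_N$ (inherited fiberwise from the Lie group structure of the torus fibers), so that the set $\pi_0(\Gamma(p_N))$ of homotopy classes of sections is an abelian group. The assignment $\overline{v}\mapsto[s_{\overline{v}}]$ is manifestly a homomorphism from $(\mathbb{Z}^n,+)$ to $\pi_0(\Gamma(p_N))$, because the defining formula $s_{\overline{v}}([t])=[q_n(t\overline{v}),t]$ is additive in $\overline{v}$ and fiberwise addition is continuous. What needs to be verified is that this homomorphism descends to an isomorphism on the quotient $\mathbb{Z}^n/(\overline{A}_N-\mathrm{id})\mathbb{Z}^n$.

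To this end I would construct an inverse $\tau$ via lifts to the universal cover. Every section has the form $[t]\mapsto[\gamma(t),t]$ for some continuous path $\gamma:I\to T^n$ satisfying the twisted boundary condition $\gamma(0)=A_N\gamma(1)$ forced by the gluing in \ref{1.14}. Choosing a lift $\widetilde{\gamma}:I\to\mathbb{R}^n$ (which exists since $I$ is simply connected), the quantity $\overline{A}_N\widetilde{\gamma}(1)-\widetilde{\gamma}(0)$ lies in $\mathbb{Z}^n$; a different lift $\widetilde{\gamma}+\overline{w}$ shifts it by $(\overline{A}_N-\mathrm{id})\overline{w}$, so its class $\tau(s):=[\overline{A}_N\widetilde{\gamma}(1)-\widetilde{\gamma}(0)]\in\mathbb{Z}^n/(\overline{A}_N-\mathrm{id})\mathbb{Z}^n$ is a well-defined invariant. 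A direct computation with the canonical lift $t\mapsto t\overline{v}$ of $s_{\overline{v}}$ gives $\tau(s_{\overline{v}})=[\overline{A}_N\overline{v}]=[\overline{v}]$. Homotopy invariance of $\tau$ then follows from the same lifting argument applied to a homotopy of sections: the family $\widetilde{H}_s:I\to\mathbb{R}^n$ depends continuously on $s$, so the integer vector $\overline{A}_N\widetilde{H}_s(1)-\widetilde{H}_s(0)$ is constant in $s$. These facts together show that $\sigma$ descends to the quotient and is split-injective.

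For surjectivity, given any section $s$ with $\tau(s)=[\overline{u}]$, I set $\overline{v}:=\overline{A}_N^{-1}\overline{u}\in\mathbb{Z}^n$ (possible since $\overline{A}_N\in\mathrm{GL}(n,\mathbb{Z})$), so that the reference lift $t\mapsto t\overline{v}$ of $s_{\overline{v}}$ realises the same integer discrepancy $\overline{u}$ as $\widetilde{\gamma}$. The straight-line interpolation $\widetilde{H}_s(t):=(1-s)\widetilde{\gamma}(t)+st\overline{v}$ in $\mathbb{R}^n$ then preserves the twisted condition $\overline{A}_N\widetilde{H}_s(1)-\widetilde{H}_s(0)=\overline{u}$ at every stage, and hence descends to a homotopy of sections from $s$ to $s_{\overline{v}}$; together with the identity $[\overline{v}]=[\overline{u}]$ in the quotient (which uses that $\overline{A}_N$ commutes with $\overline{A}_N-\mathrm{id}$ and preserves $\mathbb{Z}^n$), this shows $\sigma$ is surjective and finishes the proof. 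The one subtle point to get right is the bookkeeping with the twisted gluing condition across lifts, including the compatibility between $\tau(s_{\overline{v}})=[\overline{A}_N\overline{v}]$ and the reference $[\overline{v}]$; once the convention is pinned down, all remaining steps are elementary linear interpolations in the universal cover $\mathbb{R}^n$.
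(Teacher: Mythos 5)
Your approach is essentially the paper's: construct an explicit inverse by lifting sections to paths in $\mathbb{R}^n$ and reading off an integer discrepancy. Your formula $\tau(s)=[\overline{A}_N\widetilde{\gamma}(1)-\widetilde{\gamma}(0)]$ is a clean variant of the paper's $\overline{v}(s)=\overline{s}(1)-\overline{s}(0)$ (which requires a basepoint-preserving representative); the two agree modulo $(\overline{A}_N-\mathrm{id})\mathbb{Z}^n$ since $\overline{A}_N\overline{s}(1)-\overline{s}(1)=(\overline{A}_N-\mathrm{id})\overline{s}(1)$.

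However, there is one genuine logical misstep. After establishing that $\tau$ is a well-defined homotopy invariant and that $\tau(s_{\overline{v}})=[\overline{v}]$, you write: ``These facts together show that $\sigma$ descends to the quotient and is split-injective.'' This is not correct as stated. The identity $\tau\circ\sigma_0=\pi$ only shows $\ker\sigma_0\subseteq(\overline{A}_N-\mathrm{id})\mathbb{Z}^n$; for $\sigma_0$ to \emph{descend} to the quotient you need the \emph{opposite} inclusion, namely that $s_{(\overline{A}_N-\mathrm{id})\overline{w}}$ is nullhomotopic for every $\overline{w}\in\mathbb{Z}^n$. That requires either a direct homotopy (the paper constructs one explicitly: $S([t],\tau)=\bigl[q_n\bigl(t(\overline{v}+\tau\overline{w})+(1-t)\tau\overline{A}_N\overline{w}\bigr),t\bigr]$) or the injectivity of $\tau$. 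The latter is in fact contained in your ``surjectivity'' paragraph: the straight-line interpolation shows every $s$ with $\tau(s)=[\overline{u}]$ is homotopic to $s_{\overline{A}_N^{-1}\overline{u}}$ for \emph{any} chosen representative $\overline{u}$ (achievable by translating the lift), and two sections with the same $\tau$-value are therefore homotopic to the same $s_{\overline{v}}$, hence to each other. So the needed ingredient is present, but the descent of $\sigma$ is deduced too early and attributed to the wrong step; you should either prove $\tau$ injective before claiming descent, or exhibit a direct homotopy killing $s_{(\overline{A}_N-\mathrm{id})\overline{w}}$, as the paper does.
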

\begin{proof}
 First observe that this construction yields a welldefined map. Indeed, given vectors \(\overline{v}\) and \(\overline{w}\) in \(\mathbb{Z}^n\), the homotopy\[
S\left([t],\tau\right):= \left[q_n\left(t\left(\overline{v}+\tau\,\overline{w}\right)+(1-t)\tau\, \overline{A}_N\, \overline{w}\right)\,,\,t\right],\qquad t,\tau\in I,\]
of sections starts from \(s_{\overline{v}}\) (at \(\tau=0\)) and ends at \(s_{\overline{v}'}\) (at \(\tau=1\)) where \(\overline{v}'=\overline{v}-\left(\overline{A}_N -\mathrm{id}\right)\overline{w}\);\(\ \) note that \(S\) is compatible with the identification of \(\left(q_n\left(\tau\,\overline{w}\right),1\right)\in T^n\times \{1\}\) with \(\left(A_N\left(q_n\left(\tau\,\overline{w}\right)\right),0\right)\in T^n\times \{0\}\) in \ref{1.14}.

On the other hand, every homotopy class of sections has a representative \(s\) which maps the basepoint \(\ast=[0]=[1]\) of \(S^1\) to \([0,0]\in N\). Pick a lifting \(\overline{s}: I\to \mathbb{R}^n\) such that \(s\left([t]\right)=\left[q_n\left(\overline{s}(t)\right),t\right]\) and put
\begin{equation}\label{4.3}
 \overline{v}(s):= \overline{s}(1)-\overline{s}(0)\in \mathbb{Z}^n.
\end{equation}
Then the sections \(s\) and \(s_{\overline{v}(s)}\) are homotopic since \(\overline{s}\) can be deformed in \(\mathbb{R}^n\) into the straight path from \(\overline{s}(0)\) to \(\overline{s}(1)\). Any other base point preserving representative \(s'\) of \([s]\) can be deformed into \(s\) by a (not necessarily basepoint preserving) homotopy of the form\[
S\left([t],\tau\right)=\left[q_n\left(\overline{S}(t,\tau)\right),t\right],\qquad t,\tau\in I,
\]
where \(q_n\left(\overline{S}(0,\tau)\right)=q_n\left(\overline{A}_N\left(\overline{S}(1,\tau)\right)\right)\) (compare \ref{4.1} and \ref{1.14}). If we put \(\overline{w}=\overline{S}(1,1)-\overline{S}(1,0)\) we conclude that\[
\overline{v}\left(s'\right)-\overline{v}\left(s\right) = \overline{w}-\overline{A}_N\,\overline{w}.\]
Therefore, the map \([s]\to \left[\overline{v}(s)\right]\) is a welldefined inverse of \(\sigma\).

Clearly our bijection is compatible with the fiberwise addition of sections.
\end{proof}
Next consider a vector \(\overline{v}\in\mathbb{Z}^n\) and a linear map \[
\overline{L}: \left(\mathbb{R}^m,\mathbb{Z}^m\right)\to\left(\mathbb{R}^n,\mathbb{Z}^n\right)\]
such that \(\overline{L}\circ\overline{A}_M=\overline{A}_N\circ\overline{L}\). The induced homomorphism of tori\[
L: T^m\to T^n\]
commutes with the gluing maps of \(M\) and \(N\) and we can define a fiberwise map \(f_{L,\overline{v}}\) by
\begin{equation}\label{4.4}
 f_{L,\overline{v}}\left([x,t]\right)=\left[L\,x+q_n\left(t\,\overline{v}\right),t\right]
\end{equation}
(compare \ref{1.14}). E.g. if \(\overline{L}\equiv 0\) and hence \(L\equiv 0\), we see that
\begin{equation}\label{4.5}
 f_{0,\overline{v}} = s_{\overline{v}}\circ p_M
\end{equation}
(compare \ref{1.2}); if also \(\overline{v}=0\), we obtain the nullmap \(f_0:=f_{0,0}=s_{0N}\circ p_M\) as in the discussion of \ref{1.14}.
\begin{proposition}\label{4.6}
 The assignment\[
\xymatrix@=1.2cm{
\left(\overline{L},\overline{v}\right)\,\,\ar[r] &\,\, f_{L,\overline{v}}\,=\,f_{L,0}+s_{\overline{v}}\circ p_M}\]
determines a welldefined isomorphism from the group\[
\left\{\overline{L}: \mathbb{Z}^m\to \mathbb{Z}^n\,\text{linear}\,|\, \overline{L}\circ\overline{A}_M =\overline{A}_N\circ\overline{L}\right\}\,\; \oplus \;\,\left(\mathbb{Z}^n\diagup\!\left(\overline{A}_N -\mathrm{id}\right) \left(\mathbb{Z}^n\right)\right)\]
onto the group of homotopy classes of fiberwise maps \(f: M\to N\).

The inverse of this isomorphism maps a fiberwise homotopy class \([f]\) to the pair \(\left(\overline{L}=f|_\ast\,,\,\, \sigma^{-1}\left(\left[s_f=f\circ s_M\right]\right)\right)\)
(cf. \ref{1.12}, \ref{4.2} and section \ref{sec3}).
\end{proposition}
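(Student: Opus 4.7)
The plan is to check that \(f_{L,\overline{v}}\) is well-defined on \(M\), verify the group homomorphism property, and then produce an inverse using fiberwise straightening together with Proposition \ref{4.2}.

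First I would verify that formula \eqref{4.4} defines a continuous fiberwise map \(M\to N\). The only nontrivial compatibility condition is \(f_{L,\overline{v}}([x,1])=f_{L,\overline{v}}([\overline{A}_M x,0])\) in \(N\); this follows because \(q_n(\overline{v})=0\), and the commutation \(\overline{L}\circ\overline{A}_M=\overline{A}_N\circ\overline{L}\) reduces both sides to \([\overline{A}_N L x,0]\). Invariance of the fiberwise homotopy class under replacing \(\overline{v}\) by \(\overline{v}-(\overline{A}_N-\mathrm{id})\overline{w}\), for \(\overline{w}\in\mathbb{Z}^n\), is witnessed by the explicit homotopy
\[
F([x,t],\tau)\;:=\;\bigl[\,Lx+q_n\bigl(t(\overline{v}+\tau\overline{w})+(1-t)\tau\,\overline{A}_N\overline{w}\bigr),\;t\,\bigr],
\]
whose compatibility with the identifications in \(M\) and \(N\) is checked exactly as in the proof of Proposition \ref{4.2}. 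The homomorphism property of \((\overline{L},[\overline{v}])\mapsto[f_{L,\overline{v}}]\) is immediate from the additivity of \(q_n\) and of the fiberwise group operation in \(N\).

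Next, to establish bijectivity I would construct the candidate inverse \([f]\mapsto(f|_\ast,\sigma^{-1}([s_f]))\) and verify both compositions. Given a fiberwise \(f\), apply the straightening procedure of Proposition \ref{3.1} fiberwise, keeping \(s_f=f\circ s_{0M}\) fixed; this yields a fiberwise \(f'\) homotopic to \(f\) whose restriction to each fiber \(F_{M,[t]}\) is an affine map of tori with linear part a genuine group homomorphism. The induced map on \(\pi_1\) of the fiber takes values in the discrete group \(\mathrm{Hom}(\mathbb{Z}^m,\mathbb{Z}^n)\), hence is constant in \([t]\in S^1\); call its lift \(\overline{L}\). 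Requiring \(f'\) to be globally well-defined across the gluings of \(M\) and \(N\) then forces precisely the relation \(\overline{L}\circ\overline{A}_M=\overline{A}_N\circ\overline{L}\). By Proposition \ref{4.2}, the section \(s_f\) is homotopic (through sections of \(p_N\)) to \(s_{\overline{v}}\) for a unique class \([\overline{v}]\in\mathbb{Z}^n/(\overline{A}_N-\mathrm{id})(\mathbb{Z}^n)\); extending any such homotopy of sections to a fiberwise homotopy from \(f'\) to \(f_{L,0}+s_{\overline{v}}\circ p_M=f_{L,\overline{v}}\) (cf.\ \eqref{4.5}) completes this step.

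Finally, that the two constructions are mutually inverse is immediate from the definitions: inspection of \eqref{4.4} shows \((f_{L,\overline{v}})|_\ast=\overline{L}\) and \(s_{f_{L,\overline{v}}}=s_{\overline{v}}\), so the round trip \((\overline{L},[\overline{v}])\mapsto[f_{L,\overline{v}}]\mapsto(\overline{L},[\overline{v}])\) is the identity, while the other composition is the identity because \(f\) has been exhibited as fiberwise homotopic to \(f_{L,\overline{v}}\). The most delicate point is the straightening step, i.e.\ showing that the fiberwise straightenings of \(f\) assemble into one globally defined fiberwise map with a \emph{single} linear lift \(\overline{L}\); this rests on the discreteness of \(\mathrm{Hom}(\mathbb{Z}^m,\mathbb{Z}^n)\), the connectedness of \(S^1\), and the explicit form of the gluings in \eqref{1.14}, which together force the commutation relation between \(\overline{L}\) and \(\overline{A}_M,\,\overline{A}_N\).
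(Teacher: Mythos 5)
Your proposal is correct and follows essentially the same route as the paper's (very terse) proof, which simply cites the fiberwise and zero-section straightenings of Propositions \ref{3.1}(ii) and \ref{4.2}; you fill in the details — well-definedness across the gluings, the explicit homotopy for the $\left(\overline{A}_N-\mathrm{id}\right)$-ambiguity (which is the map $S$ from Proposition \ref{4.2} added fiberwise to $f_{L,0}$), and the discreteness argument pinning down a single $\overline{L}$ — all of which are implicit in the paper's references.
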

\begin{proof}
 Here we have chosen the base point \(\ast=[1]\) of \(B=S^1\) and used the identifications \(T^m=T^m\times \{1\}=F_M\) and \(F_N=T^n\times \{1\}=T^n\) when restricting \(f\) to these standard fibers (cf. \ref{1.12}).

We obtain our result by straightening \(f\) both along each fiber and along the zero section \(s_{0M}\) (see propositions \ref{3.1}(ii) and \ref{4.2}).
\end{proof}
\begin{example}[\( m=n=1 \)](compare [GK])\label{4.7}\\
 Here \(\overline{A}_M,\overline{A}_N\in \mathrm{GL}\left(1,\mathbb{Z}\right)=\{\pm 1\}\). Thus \(M\) equals the 2-dimensional torus \(T\) or the Klein bottle (fibered in the standard way over \(B=S^1\)), and so does \(N\).

A fiberwise map \(f\) is characterized (up to fiberwise homotopy) by two numbers. The first one is an integer which corresponds to the endomorphism \(\overline{L}\) of \(\mathbb{Z}\) and must vanish if \(M\neq N\) (since then \(\overline{L}=-\overline{L}\), cf. \ref{1.16}); it equals the mapping degree of the restriction of \(f\) to a single fiber. The second characterizing number lies in \(\mathbb{Z}\) (or \(\mathbb{Z}_2\), resp.) of \(N=T\) (or \(N=K\), resp.); it measures ``how often \(f\) winds the zero section of \(M\) around the fibers of \(N\)'' (provided \(f\) preserves basepoints in a certain sense).

It can be shown that these two characterizing numbers (and hence the fiberwise homotopy class of \(f\)) are fully determined by the coincidence invariant \(\omega_B(f,f_0)\) (cf. theorem 1.4 in [GK] and \ref{1.22}).\(\hfill\Box\)
\end{example}
Now let us check to what extend the data in proposition \ref{4.6} influence the formula \ref{1.17} which describes the Reidemeister invariant (cf. definition \ref{1.26neu3}) of the pair \((f_{L,\overline{v}},\,f_0)\).
\begin{lemma}\label{4.8}
 Let \(H\subset\mathbb{Z}^n\) be a subgroup such that \(\overline{A}_N(H)=H\). If two integer vectors \(\overline{v},\,\overline{v}'\in\mathbb{Z}^n\) differ by a vector in \(\,H\,+\,\left( \overline{A}_N -\mathrm{id}\right)\!\left(\mathbb{Z}^n\right)\,\) then the selfmaps \(\beta_{\overline{v}}\) and \(\beta_{\overline{v}'}\) (defined as in \ref{1.17}) on the quotient group \(\,G=\mathbb{Z}^n\diagup \!H\,\)  determine the same element in \(\mathcal{R}_B\) (cf. definition \ref{1.20neu} (ii)).
\end{lemma}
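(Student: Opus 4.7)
\medskip
\noindent\textbf{Proof plan for Lemma \ref{4.8}.} Since $B=S^1$ we have $\pi_1(B)=\mathbb{Z}$, so an action on $G$ is determined by a single affine automorphism; in our setting these generators are $\beta_{\overline{v}}$ and $\beta_{\overline{v}'}$. According to Definition \ref{1.20neu}(ii), the pairs $(G,\beta_{\overline{v}})$ and $(G,\beta_{\overline{v}'})$ are equivalent precisely when there exists an affine isomorphism $k\colon G\to G$ with $k\circ\beta_{\overline{v}}=\beta_{\overline{v}'}\circ k$. The plan is to produce such a $k$ in the form of a translation.

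Write the given difference as $\overline{v}'-\overline{v}=\overline{h}+(\overline{A}_N-\mathrm{id})\overline{w}$ with $\overline{h}\in H$ and $\overline{w}\in\mathbb{Z}^n$. I set
\[
k[\overline{u}]\,:=\,[\overline{u}+\overline{c}],\qquad \overline{c}\,:=\,\overline{A}_N\overline{w},
\]
which is patently an affine automorphism of $G=\mathbb{Z}^n/H$. Computing directly from \ref{1.17},
\[
k\bigl(\beta_{\overline{v}}[\overline{u}]\bigr)=[\overline{A}_N(\overline{u}-\overline{v})+\overline{c}],\qquad \beta_{\overline{v}'}\bigl(k[\overline{u}]\bigr)=[\overline{A}_N(\overline{u}+\overline{c}-\overline{v}')],
\]
so the intertwining condition collapses to the single identity
\[
\overline{A}_N(\overline{v}'-\overline{v})\,\equiv\,(\overline{A}_N-\mathrm{id})\overline{c}\pmod{H}.
\]

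To verify this, apply $\overline{A}_N$ to the decomposition of $\overline{v}'-\overline{v}$:
\[
\overline{A}_N(\overline{v}'-\overline{v})\,=\,\overline{A}_N\overline{h}+(\overline{A}_N-\mathrm{id})\overline{A}_N\overline{w}.
\]
The hypothesis $\overline{A}_N(H)=H$ gives $\overline{A}_N\overline{h}\in H$, so modulo $H$ the right side equals $(\overline{A}_N-\mathrm{id})\overline{A}_N\overline{w}=(\overline{A}_N-\mathrm{id})\overline{c}$, as required. There is essentially no obstacle here: the only subtlety is spotting the correct shift $\overline{c}=\overline{A}_N\overline{w}$ (rather than the naive choice $\overline{w}$), which is forced by the appearance of $\overline{A}_N$ in front of the difference on the left-hand side of the intertwining identity. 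The $H$-invariance of $\overline{A}_N$ is exactly what absorbs the $\overline{h}$-contribution and makes the translation work.
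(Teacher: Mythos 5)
Your proof is correct and uses essentially the same intertwining translation $k=$ (translation by $[\overline{A}_N\overline{w}]$) as the paper's own proof. The only cosmetic difference is that the paper first observes that $\beta_{\overline{v}}$ depends on $\overline{v}$ only modulo $H$ (using $\overline{A}_N(H)=H$) and so reduces to the case $\overline{h}=0$, whereas you carry the $\overline{h}$-term through the computation and absorb it at the end via the same $\overline{A}_N$-invariance of $H$.
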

\begin{proof}
We need to consider only the case where \(\,\overline{v}'\,=\,\overline{v}+\left(\overline{A}_N -\mathrm{id}\right)\!(\overline{w})\,\) for some \(\overline{w}\in\mathbb{Z}^n\). Then for all \(\overline{u}\in\mathbb{Z}^n\)\[
\beta_{\overline{v}'}\left[\overline{u}+\overline{A}_N(\overline{w})\right]\,= \,\beta_{\overline{v}}[\overline{u}]+\left[\overline{A}_N (\overline{w})\right]\]
so that \((G,\beta_{\overline{v}})\) and \((G,\beta_{\overline{v}'})\) are equivalent via the translation by \(\left[\overline{A}_N(\overline{w})\right]\).
\end{proof}
Lemma \ref{4.8} implies that the Reidemeister invariant depends only on the homotopy class (over \(S^1\)) of \(f_{L,\overline{v}}\). In addition it allows sometimes to simplify the calculation of orbit numbers (see e.g. example \ref{4.12neu} below).\\

Next we turn to the target of the map \(\widetilde{g}\) (cf. \ref{1.4}). In view of propositions \ref{3.1} and \ref{4.6} it suffices to focus our attention on spaces of the form \(E_B\left(f_{L,\overline{v}},f_0\right)\) where \(L\) and \(\overline{v}\) are as in \ref{4.4} and \ref{4.6}. These data allow us also to define a selfhomeomorphism \(\overline{b}\) of \(D_L =\left(\mathbb{R}^m\times \mathbb{Z}^n\diagup\!\sim\right)\) (cf. \ref{2.4}) by 
\begin{equation}\label{4.9}
 \overline{b}\left([\overline{x},\overline{u}]\right) := \left[\overline{A}_M(\overline{x}), \overline{A}_N(\overline{u}-\overline{v})\right],\quad \overline{x}\in\mathbb{R}^m,\overline{u}\in\mathbb{Z}^n;
\end{equation}
(it induces the selfmap \(\beta=\beta_{\overline{v}}\,\) on \(\,\mathbb{Z}^n\diagup\overline{L}\left(\mathbb{Z}^m\right)\) which takes \([\overline{u}]\) to \(\left[\overline{A}_N\left(\overline{u}-\overline{v}\right)\right]\), cf. \ref{1.17}).
\begin{theorem}\label{4.10}
 \begin{enumerate}[(i)]
  \item There is a homotopy equivalence \(e\) between the mapping torus\[
D:=\left(D_L\times I\right)\diagup\left([\overline{x},\overline{u}],1\right) \sim\left(\overline{b}\left([\overline{x},\overline{u}]\right),0\right)\]
and \(E_B\left(f_{L,\overline{v}},f_0\right)\) (and \(e\) is compatible with the natural projections to \(\ \ I\diagup\!\sim\ \ =\ \ S^1\ \)).

In particular, \(e\) induces a canonical bijection from the set of all orbits of (the \(\mathbb{Z}\)-action determined by) \(\beta\) in \(\mathbb{Z}^n\diagup\overline{L}\left(\mathbb{Z}^m\right)\) onto the set \(\pi_0\left(E_B\left(f_{L,\overline{v}},f_0\right)\right)\) of pathcomponents, i.e. the Reidemeister set \(R_B\left(f_{L,\overline{v}},f_0\right)\). Such an orbit of \(\beta\) has an odd number of elements if and only if the corresponding pathcomponent \(Q\) of \(E_B\left(f_{L,\overline{v}},f_0\right)\) satisfies the following condition: the homomorphism\[
\left(p_M\circ\mathrm{pr}\right)_\ast : H_1\left(Q;\mathbb{Z}_2\right)\to H_1\left(S^1;\mathbb{Z}_2\right)\]
(induced by the natural projection) is surjective.
\item Assume that \(\overline{v}\in\overline{L}\left(\mathbb{R}^m\right)\). Let \(E_B'\left(f_{L,\overline{v}},f_0\right)\) denote the union of those pathcomponents of \(E_B\left(f_{L,\overline{v}},f_0\right)\) which corresponds to the orbits of \(\beta\) which lie in \(\left(\overline{L}\left(\mathbb{R}^m\right)\cap\mathbb{Z}^n\right)\diagup \overline{L}\left(\mathbb{Z}^m\right)\). 

Then the map\[
\widetilde{g}: C_B\left(f_{L,\overline{v}},f_0\right)\to E_B'\left(f_{L,\overline{v}},f_0\right)\]
which sends \(x\) to \(\left(x,\,\text{constant path at } f_{L,\overline{v}}(x)=f_0(x)\,\text{(compare \ref{1.4})}\right)\) is a homotopy equivalence. (In fact, \(\widetilde{g}\) is a homeomorphism onto a strong deformation retract of \(E_B'\left(f_{L,\overline{v}},f_0\right)\)).
 \end{enumerate}
\end{theorem}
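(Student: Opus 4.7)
The plan is to bootstrap from Theorem~\ref{2.3} (which handles the case $B=\{\text{point}\}$) by threading the fibrewise homotopy equivalences and deformation retracts around the base circle $S^1$. Both $M$ and $N$ are mapping tori with monodromies $A_M$ and $A_N$, and correspondingly $E_B(f_{L,\overline{v}},f_0)$ inherits a mapping-torus structure over $S^1$ whose fibre over $[t]$ is canonically identified with $E(f_{L,\overline{v}}|,0)$ for the pair of induced maps on the torus fibres over $[t]$.

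For part (i), I would transport this fibration through the homotopy equivalence $e\colon D_L\to E(f|,0)$ supplied by Theorem~\ref{2.3}(i). The only content to verify is that the monodromy, pulled back along $e$, matches $\overline{b}$ of (\ref{4.9}). This is a direct computation: starting from $[\overline{x},\overline{u}]\in D_L$, one uses the formula (\ref{2.5}) for $e$ at $t=1$, applies the gluings $[x,1]\sim[A_Mx,0]$ in $M$ and $[u,1]\sim[A_Nu,0]$ in $N$ together with the shift contributed by $s_{\overline{v}}$, and reads off the new $D_L$-parameters at $t=0$; the identity $\overline{L}\,\overline{A}_M=\overline{A}_N\,\overline{L}$ and the affine $\overline{v}$-shift combine to produce exactly $\overline{b}$. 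Since $\pi_0(D_L)\cong\mathbb{Z}^n/\overline{L}(\mathbb{Z}^m)$ by (\ref{2.6}), the path-components of the mapping torus $D$ (and hence of $E_B$) are naturally indexed by $\beta$-orbits.

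For the odd-order criterion, I would analyse the path-component $Q\subset D$ associated to a finite $\beta$-orbit of length $k$: it is the mapping torus of $\overline{b}^k$ acting on a single path-component of $D_L$ and therefore fibres over a connected $k$-fold cover $\widetilde{S}^1\to S^1$ with connected fibre. The composite $Q\to\widetilde{S}^1\to S^1$ induces multiplication by $k\bmod 2$ on $H_1(-;\mathbb{Z}_2)$, which is onto exactly when $k$ is odd. For an infinite orbit, the corresponding $Q$ factors through the universal cover $\mathbb{R}\to S^1$, so the induced map on $H_1(-;\mathbb{Z}_2)$ vanishes, as required.

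For part (ii), I would apply the strong deformation retract of Theorem~\ref{2.3}(ii) fibrewise over $t\in[0,1]$ and then check compatibility with the identification at $t=0\sim 1$. The fibrewise retraction projects $\overline{x}$ linearly onto the affine solution set $\overline{L}^{-1}(\overline{u}-t\overline{v})$ along a chosen complement $K_t$ of the $\overline{A}_M$-invariant subspace $\ker\overline{L}$ inside $\mathbb{R}^m$. The compatibility required by the gluing $[\overline{x},\overline{u},1]\sim[\overline{A}_M\overline{x},\overline{A}_N(\overline{u}-\overline{v}),0]$ unwinds to the single relation $\overline{A}_M(K_1)=K_0$; the main technical step is therefore to choose a continuous family $t\mapsto K_t$ of complements with $K_1=\overline{A}_M^{-1}(K_0)$. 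Since the space of complements of $\ker\overline{L}$ in $\mathbb{R}^m$ is an affine space, hence path-connected, such a family exists for any $K_0$. Once this is secured, the fibrewise retractions glue into a global strong deformation retract of $E_B'(f_{L,\overline{v}},f_0)$ onto the image of $\widetilde{g}$, and that image is canonically homeomorphic to $C_B(f_{L,\overline{v}},f_0)$, yielding both claims of~(ii).
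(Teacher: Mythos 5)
Your proposal is correct and follows essentially the same route as the paper's own proof: threading the pointwise equivalences $e_t$ and strong deformation retracts of Theorem \ref{2.3} around the mapping torus, reading off $\overline{b}$ as the pulled-back monodromy, labelling path-components by $\beta$-orbits, and in (ii) patching the fibrewise retraction through the gluing $t=0\sim 1$ by choosing a continuous family of complements to $\ker\overline{L}$ (the paper phrases this as a linear isotopy of the splitting near the gluing parameter, which amounts to your path $t\mapsto K_t$ with $\overline{A}_M(K_1)=K_0$). The only place you go beyond the paper's exposition is the odd-order criterion in (i), where you explicitly factor the projection through the connected $k$-fold cover $\widetilde S^1\to S^1$ and compute the effect on $H_1(-;\mathbb{Z}_2)$; the paper merely notes that $Q$ winds around $S^1$ $q$ times and leaves that homological conclusion implicit.
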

\begin{proof}
 By definition (cf. \ref{4.6}) \(f_{L,\overline{v}}\) is built up fiberwise from the maps \[
\xymatrix@=1,2cm{
f_t=L+q_n(t\,\overline{v})\;:\; T^m\ar[r]\,&\, T^n,\quad t\in I.}\]
Consider the corresponding homotopy equivalences \(e_t\) (cf. \ref{2.4}). They fit together to yield the homotopy equivalence\[
\xymatrix@=1.2cm{
e:\left(D_L\times I\right)\diagup\!\sim \,\,\ar[r]& \,\, E_B\left(f_{L,\overline{v}},f_0\right)}.\]
Indeed, the effects of the gluing maps \(A_M\) and \(A_N\) (cf. \ref{1.14}) transform \(e_1\) into \(e_0\circ \overline{b}\). Moreover, the process of deforming a path in \(\mathbb{R}^n\) linearly into the straight path with the same endpoints is compatible with the identification \[
\overline{A}_N :\mathbb{R}^n\times \{1\} \cong \mathbb{R}^n\times \{0\}\]
(compare the discussion of \ref{2.5}).

Recall from \ref{2.6} that the pathcomponents of \(D_L\) (and hence of \(D_L\times I\)) can be labelled bijectively by the elements \([\overline{u}]\in\mathbb{Z}^n\diagup \overline{L}\left(\mathbb{Z}^m\right)\). In the mapping torus \(\,\,\left(D_L\times I\right)\diagup\!\sim\ \) the top end of the component labelled \([\overline{u}]\) gets glued to the bottom end of the component whose label is \(\beta\left([\overline{u}]\right)\). Therefore each orbit\[
\ldots\,, \,\,\beta^{-1}[\overline{u}]\,, \,\,\overline{u}\, ,\,\,\beta[\overline{u}] \,, \,\,\beta^2[\overline{u}]\,, \,\,\ldots\]
of \(\beta\) (of order \(q\)) corresponds - via \(e\) - to a pathcomponent \(Q\) of \(E_B\left(f_{L,\overline{v}},f_0\right)\) (which, when projected to the base \(S^1\), winds around it \(q\) times).

In order to establish the second claim of theorem \ref{4.10} we have to make the retractions and deformations in the proof of theorem \ref{2.3}(ii) compatible with the gluing in the mapping torus \(\left(D_L\times I\right)\diagup\!\sim\ \). \(\ \)If \(\overline{v}\) and \(\overline{u}\) lie in \(\overline{L}\left(\mathbb{R}^m\right)\cap\mathbb{Z}^n\) then for every \(t\in I\)
\[
q_m\left(\overline{L}^{-1}\left(\left\{\overline{u}-t\,\overline{v}\right\}\right)\right)\quad \cong\quad \overline{L}^{-1}\left(\left\{\overline{u}-t\,\overline{v}\right\}\right)\diagup \left(\mathbb{Z}^m\cap\mathrm{ker}\,\overline{L}\right)
\]
is a strong deformation retract of \[
\mathbb{R}^m\diagup \left(\mathbb{Z}^m\cap\mathrm{ker}\,\overline{L}\right)\; \cong\; r^{-1}\left\{[\overline{u}1]\right\}\subset D_L\]
(cf. \ref{2.7}). The (affine) retraction depends on a choice: it involves (the projection \(p_K\) along \(K\) in) a splitting \(\mathbb{R}^m= \mathrm{ker}\,\overline{L}\oplus K\). This can be isotoped linearly into the splitting \(\mathbb{R}^m= \mathrm{ker}\,\overline{L}\oplus \overline{A}_M(K)\). We can use such an isotopy to make the necessary corrections over a neighbourhood of the gluing parameter \([0]=[1]\) in the base \(B=S^1\).
\end{proof}
As a consequence we can settle Case 0 in theorem \ref{1.18}.
\begin{corollary}\label{4.12}
 Assume that the linear lifting \(\overline{L}: \mathbb{R}^m \to\mathbb{R}^n\) of \(L\) is surjective. Then \(\mathrm{MC}_B\left(f_{L,\overline{v}},f_0\right)=\infty\) and\[
\mathrm{MCC}_B\left(f_{L,\overline{v}},f_0\right) =\mathrm{N}_B\left(f_{L,\overline{v}},f_0\right)=\#\pi_0\left(E_B\left(f_{L,\overline{v}},f_0\right)\right)\]
equals the number of orbits of (the \(\mathbb{Z}\)-action defined by) the selfmap \(\beta=\beta_{\overline{v}}\) on \(\mathbb{Z}^n\diagup\overline{L}\left(\mathbb{Z}^m\right)\) (cf. \ref{1.17}).
\end{corollary}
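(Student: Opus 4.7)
The plan is to deduce this corollary directly from the homotopy-equivalence statement in Theorem \ref{4.10}, together with the standard chain of inequalities \(N_B\leq\mathrm{MCC}_B\leq\mathrm{MC}_B\) from \eqref{1.9}. First observe that the hypothesis \(\overline{L}(\mathbb{R}^m)=\mathbb{R}^n\) makes the assumption of Theorem \ref{4.10}(ii) automatic: \(\overline{v}\in\overline{L}(\mathbb{R}^m)\) trivially, and \(\overline{L}(\mathbb{R}^m)\cap\mathbb{Z}^n=\mathbb{Z}^n\), so every orbit of \(\beta=\beta_{\overline v}\) lies in the relevant subset and therefore \(E'_B(f_{L,\overline v},f_0)=E_B(f_{L,\overline v},f_0)\). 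Consequently the coincidence map
\[
\widetilde g\;:\;C_B(f_{L,\overline v},f_0)\;\longrightarrow\;E_B(f_{L,\overline v},f_0)
\]
is a homotopy equivalence.

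Next I would count the Nielsen classes. By part (i) of Theorem \ref{4.10} the pathcomponents \(Q\) of \(E_B(f_{L,\overline v},f_0)\) are in bijective correspondence with the orbits of \(\beta\) on \(\mathbb{Z}^n/\overline L(\mathbb{Z}^m)\). Because \(\widetilde g\) is a homotopy equivalence, it restricts to homotopy equivalences \(\widetilde g|:C_Q\to Q\), where each \(C_Q:=\widetilde g^{-1}(Q)\) is a non-empty closed connected manifold of dimension \(m+b-n=m-n+1\geq 1\). Hence \(\widetilde g_\ast([C_Q])\in H_{m-n+1}(Q;\mathbb{Z}_2)\) is carried to the fundamental class \([Q]\neq 0\) under the identification induced by the equivalence, so
\[
\mu_2\bigl([C_Q,\widetilde g|,\overline g|]\bigr)=\widetilde g_\ast([C_Q])\neq 0.
\]
Thus every pathcomponent \(Q\) is essential (cf.\ Definition \ref{1.8}), which yields \(N_B(f_{L,\overline v},f_0)=\#\pi_0(E_B(f_{L,\overline v},f_0))=\)\,(number of \(\beta\)-orbits).

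To obtain \(\mathrm{MCC}_B=N_B\) I observe that the concrete representative \((f_{L,\overline v},f_0)\) already realizes this bound: its coincidence set has exactly one connected component per pathcomponent \(Q\), namely the connected closed manifold \(C_Q\). Hence \(\mathrm{MCC}_B\leq\#\pi_0(E_B)=N_B\), and combined with \eqref{1.9} the equality \(\mathrm{MCC}_B=N_B\) follows.

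The remaining claim \(\mathrm{MC}_B=\infty\) will be argued just as in the proof of Corollary \ref{2.8}. Suppose some pair fiberwise homotopic to \((f_{L,\overline v},f_0)\) had only finitely many coincidence points; after a transverse perturbation within its homotopy class, the generic coincidence manifold would be a finite (hence \(0\)-dimensional) subset of \(M\). Its \(\widetilde g\)-image would therefore represent a class in \(H_{m-n+1}\) supported on finitely many points, i.e.\ the zero class, so \(\mu_2(\widetilde\omega_B)=0\). Since \(\widetilde\omega_B\) is a fiberwise homotopy invariant (cf.\ \cite[Ko2]{} and the discussion following \eqref{4.4}) and we just computed \(\mu_2(\widetilde\omega_B)\neq 0\) from the positive-dimensional components \(C_Q\) (whenever there is at least one orbit, i.e.\ \(N_B\neq 0\), which holds since \(\mathbb{Z}^n/\overline L(\mathbb{Z}^m)\) is non-empty), this is a contradiction. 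The main obstacle is precisely this last step: making sure that the transition from ``\(\widetilde g\) is a homotopy equivalence'' to ``\(\mu_2(\widetilde\omega_B)\) has non-vanishing \(Q\)-components'' is correctly phrased, and that the homotopy-invariance of \(\widetilde\omega_B\) under general (not necessarily transverse) fiberwise homotopies can indeed be invoked to exclude coincidence reductions to finite point sets.
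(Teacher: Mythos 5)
Your argument for $\mathrm{MCC}_B = \mathrm{N}_B = \#\pi_0(E_B)$ is essentially the paper's: both invoke Theorem \ref{4.10}(ii) to conclude that $\widetilde{g}$ is a homotopy equivalence (with $E_B' = E_B$ since $\overline{L}(\mathbb{R}^m)\cap\mathbb{Z}^n = \mathbb{Z}^n$), so that every pathcomponent of $E_B$ is essential and the concrete pair $(f_{L,\overline v},f_0)$ realizes the bound with one connected $C_Q$ per component. Your notation is slightly loose when you write ``the fundamental class $[Q]$'' -- $Q$ is a pathspace component, not a manifold -- but the intended point, that a homotopy equivalence sends the nontrivial class $[C_Q]\in H_{m-n+1}(C_Q;\mathbb{Z}_2)$ to a nontrivial class, is correct and is exactly the paper's argument.

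Where you diverge from the paper is in the proof that $\mathrm{MC}_B = \infty$. The paper uses a much more elementary fiberwise-counting argument: by Corollary \ref{2.8} (the $b=0$ case), the restriction of the pair to each single fiber $p_M^{-1}\{[t]\}\cong T^m$ already has a nonzero Nielsen number, hence cannot be deformed (even within that fiber) to be coincidence-free; therefore any pair fiberwise homotopic to $(f_{L,\overline v},f_0)$ has at least one coincidence point in each of the infinitely many fibers. Your homological argument, adapted from the proof of Corollary \ref{2.8}, also works, but note two imprecisions: a generic coincidence manifold after transverse perturbation is $(m+b-n)$-dimensional, not finite -- you should say it lies in small contractible neighbourhoods of the finite coincidence set, so its $\widetilde g$-image is nullhomotopic and $\mu_2(\widetilde\omega_B)$ vanishes -- and you should note that $m\geq n$ (from surjectivity of $\overline L$) guarantees $m+b-n = m-n+1 \geq 1 > 0$, which is needed for this vanishing argument to be conclusive. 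The paper's route to $\mathrm{MC}_B=\infty$ entirely avoids the homotopy-invariance and transversality subtleties you flag at the end, which is why it is preferable here.
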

\begin{proof}
 Here the smooth map\[
\left(f_{L,\overline{v}},f_0\right): M\to N\times_B N\]
is transverse to the diagonal \(\triangle\) (compare [GK], 1.4). The pathcomponents of the resulting \emph{generic} coincidence manifold \(C_B\!\left(f_{L,\overline{v}},f_0\right)\) correspond bijectively - via the coincidence datum \(\widetilde{g}\) (cf. \ref{1.4}) - to the pathcomponents of \(E_B\left(f_{L,\overline{v}},f_0\right)= E_B'\left(f_{L,\overline{v}},f_0\right)\). They are all essential since \(C_B\left(f_{L,\overline{v}},f_0\right)\) is a closed manifold and \(\widetilde{g}\) is a homotopy equivalence (too see this, homology with coefficients in \(\mathbb{Z}_2\) suffices). This shows that the Nielsen number agrees both with the Reidemeister and minimum numbers of \(\left(f_{L,\overline{v}},f_0\right)\). 

Finally recall from corollary \ref{2.8} that the restriction of \(\left(f_{L,\overline{v}},f_0\right)\) to any given fiber \(p_N^{-1}\left\{[t]\right\}\cong T^m,\,[t]\in S^1,\) cannot be deformed to become coincidence free. Thus any pair of maps from \(M\) to \(N\) which is fiberwise homotopic to \(\left(f_{L,\overline{v}},f_0\right)\) must have at least one coincidence point in each fiber. Therefore \(\mathrm{MC}_B\left(f_{L,\overline{v}},f_0\right) =\infty\).
\end{proof}
\begin{example}\label{4.12neu}
 Suppose that \(M=N\) is a fiberwise product of \(n\) Klein bottles over \(S^1\) (i.e. \(\,\overline{A}_M=\overline{A}_N=-\mathrm{id}\); compare example \ref{4.7}). According to proposition \ref{4.6} fiberwise homotopy classes \(\,[f\,:\,M\,\to\,N]\,\) are classified by arbitrary \(n\times n\)-matrices \(\overline{L}\) with integer entries and by residue classes \(\,[\overline{v}]\in\mathbb{Z}^n\diagup 2\mathbb{Z}^n\). Consider the case where \(\overline{L}\) is diagonal with odd entries \(a_{11},\ldots,a_{nn}\). Then the group \(\mathbb{Z}^n\) is fully generated by its subgroups \(\,\overline{L}\left(\mathbb{Z}^m\right)\,\) and \(\,\left(\overline{A}_M-\mathrm{id}\right)\left(\mathbb{Z}^n\right).\) According to lemma \ref{4.8} the selfmap \(\beta_{\overline{v}}\) of \[
G\,:=\, \mathbb{Z}^n\diagup\overline{L}\left(\mathbb{Z}^m\right)\,=\,\bigoplus \mathbb{Z}_{|a_{ii}|}\]
has the same orbit behavior as the involution \(\,\beta_0=\overline{A}_{N\ast}=-\mathrm{id}\,\) on \(G\). Here the only odd order orbit consists of the fixed point \(0\in G\); all orther orbits have order 2. Thus the values taken by the functions \(\nu_{\mathrm{odd}},\,\nu_{\mathrm{even}},\,\nu_{\infty}\) and \(\nu\) on \(\,(G,\beta_{\overline{v}})\,\) are\[
1,\ \frac{1}{2}\,\left(\prod\,\left|a_{ii}\right| -1\right),\ 0\ \text{and}\ \frac{1}{2}\,\left(\prod\left|a_{ii}\right|+1\right),\ \text{resp.}\]
(all of them independent of \(\overline{v}\)).\(\hfill\Box\)
\end{example}

Finally we prove proposition \ref{1.21neu2}. We may assume that \(\overline{L}\neq 0\). First consider the selfmaps \(\eta_+\) and \(\eta_-\) of the finite group \(\,G\,=\,\mathbb{Z}^2\diagup\overline{L}\left(\mathbb{Z}^2\right)\,\) induced by complex multiplication with \(i+1\) and \(i-1\), resp. They have isomorphic cokernels and kernels of order at most \(2\) since \(\eta_-=i\cdot\eta_+\) and \[
(i\pm 1)\cdot \mathbb{Z}^2 \,=\, \left\{\overline{z}=\overline{z}_1+i\,\overline{z}_2\in\mathbb{Z}^2\, \left|\right.\, \overline{z}_1+\overline{z}_2\,\text{even}\right\}.\]

If \(k+l\) (cf. proposition \ref{1.21neu2}) is odd (or, equivalently, \(k^2+l^2\equiv 1\,(4)\)), then \(\mathbb{Z}^2\) is spanned by \(\,\overline{L}\left(\mathbb{Z}^2\right)\,\) and \(\,(i-1)\cdot\mathbb{Z}^2\). Thus \(\eta_\pm\) is onto and hence bijective. Moreover \(\beta_{\overline{v}}\) has the same orbit behavior as \(\,\beta_0=i\cdot\mathrm{id}\,\) (cf. lemma \ref{4.8}).

If \(k+l\) is even (and therefore so is \(k^2+l^2\)), then \(\,k+i\,l\,=\,(i+1)\overline{w}\,\) for some \(\,\overline{w}\in\mathbb{Z}^2\,\) and we have the exact sequence
\begin{equation}\label{4.13neu}
 \xymatrix@=1cm{
0\,\ar[r] & \,\mathbb{Z}_2\,\ar^-{\xi}[r] & \,G\, \ar^-{\eta_+}[r] & \,G\,\ar^-{\zeta}[r] &\, \mathbb{Z}_2 \, \ar[r] &\, 0
}
\end{equation}
with \(\,\xi(1):=[\overline{w}]\,\) and \(\,\zeta[\overline{z}]:=[\overline{z}_1+\overline{z}_2]\).

Recall that \(\,\beta_{\overline{v}}[\overline{z}]=i[\overline{z}]-i[\overline{v}],\;[\overline{z}]\in G\). Thus we see by induction that \[
\beta^s_{\overline{v}}[\overline{z}]=i^s[\overline{z}]-\left(i^s+i^{s-1}+\ldots+i\right)[\overline{v}] \]
for all \(s\geq 1\). Clearly \(\,\beta^4_{\overline{v}}\equiv\mathrm{id}\,\) so that \(\,\beta_{\overline{v}}\,\) can have only orbits of order \(1\), \(2\) and \(4\).

We can characterize the fixed points \(\,[\overline{z}]\,\) of \(\,\beta_{\overline{v}}\,\) by the condition \(\,\eta_+[\overline{z}]=[\overline{v}]\). The number \(\nu_1\) of such points equals \(\#\mathrm{ker}\,\eta_+\,\) except when \(\,\zeta[\overline{v}]=[\overline{v}_1+\overline{v}_2]\neq 0\,\) (cf. \ref{4.13neu}).

If an orbit of order \(2\) exists, it has the form \(\,\mathcal{O}=\left\{[\overline{z}],\beta_{\overline{v}}[\overline{z}]\right\}\,\) where \[
\beta^2_{\overline{v}}[\overline{z}]-[\overline{z}]\, =\,-2[\overline{z}]-(i-1)[\overline{v}]\,=\, \eta_-\,\left(\eta_+[\overline{z}]-[\overline{v}]\right)\,=\,0 \]
but \(\,-i\left(\beta_{\overline{v}}[\overline{z}]-[\overline{z}]\right)\,= \,\eta_+[\overline{z}]-[\overline{v}]\,\neq \,0\). This can happen only if \(\,k\equiv l(2)\,\) and \(\,\eta_+[\overline{z}]-[\overline{v}]\,\) is the unique nontrivial element \([\overline{w}]\) of \(\mathrm{ker}\,\eta_-=\mathrm{ker}\,\eta_+\,\) (cf. \ref{4.13neu}), i.e. \(\,\zeta[\overline{v}+\overline{w}]=0\,\) and \(\,\mathcal{O}=\eta_+^{-1}\left\{[\overline{v}+\overline{w}\right\}\); note that \(\,\zeta[\overline{v}+\overline{w}]=[\overline{v}_1+\overline{v}_2+l]\,\) since \(\,(i+1)\cdot\overline{w}=k+i\,l\).

The remaining orbits have order \(4\) and the cardinalities of all orbits sum up to yield the cardinality of the quotient group \(\,G=\mathbb{Z}^2\diagup\overline{L}\left(\mathbb{Z}^2\right)\). Here every residue class contains a unique integer vector which lies in the halfopen parallelogram \(P\) (cf. \ref{6.6}) spanned by \(\,\overline{L}(1)=k+i\,l\,\) and \(\,\overline{L}(i)=-l+i\,k\). Thus \(\,\# G=\det\left(\overline{L}(1),\overline{L}(i)\right)=k^2+l^2\,\) and the proof of proposition \ref{1.21neu2} is complete.

%--------------------------------------------------------------------
% ----------------- section 5 -------------------------------------
%-------------------------------------------------------------------

\section{Computing obstruction groups}\label{sec5}
We continue to discuss the case \(B=S^1\). In this section we develop a technique which allows us often to describe the normal bordism groups in which the \(\omega\)-invariants lie, and the effect of the Hurewicz homomorphisms into the homology with coefficients in \(\mathbb{Z}_2\).

Given invertible matrices \(\overline{A}_M\in \mathrm{GL}\left(m,\mathbb{Z}\right)\) and \(\overline{A}_N\in \mathrm{GL}\left(n,\mathbb{Z}\right)\) as in section \ref{sec4}, consider the manifolds
\begin{equation}\label{5.1}
 \overline{M}:= \mathbb{R}^m\times I\diagup(\overline{x},1)\sim \left(\overline{A}_M(\overline{x}),0\right),\,\overline{x}\in\mathbb{R}^m,
\end{equation}
and
\begin{equation}\label{5.1'}\tag{\ref{5.1}'}
  \overline{N}:= \mathbb{R}^n\times I\diagup(\overline{u},1)\sim \left(\overline{A}_N(\overline{u}),0\right),\, \overline{u}\in\mathbb{R}^n,
\end{equation}
which are total spaces of covering maps over \(M\) and \(N\) (cf. \ref{1.14}) and of vector bundles over \(S^1=I\diagup 0\sim 1\).

The pullbacks \(p_M^\ast\left(\overline{M}\right)\) and \(p_N^\ast\left(\overline{N}\right)\) of these vector bundles are canonically isomorphic to the tangent bundles \(TF(p_M)\) and \(TF(p_N)\) along the fibers of \(p_M\) and \(p_N\), resp., (and hence stably isomorphic to the full tangent bundles \(TM\) and \(TN\), resp.). Thus the virtual coefficient bundle 
\begin{equation}\label{5.2}
 \varphi=p_M^\ast \left(\overline{N}-\overline{M}\right)= p_M^\ast (\lambda_d)\in \widetilde{KO}M)=KO(M)\diagup \{\text{trivial vector bundles}\}
\end{equation}
(cf. \ref{1.6}) is independent of the maps \(f_1,f_2\) and depends only on 
\begin{equation}\label{5.3}
 d:=\det \left(\overline{A}_M\right)\cdot\det\left(\overline{A}_N\right)\in\{\pm 1\}
\end{equation}
here \(\lambda_d:=\mathbb{R}\times I\diagup(r,1)\sim (d\cdot r,0)\) denotes the corresponding line bundle over \(S^1\).

In this section we will identify the torus \(T^n\) with the fiber \(F_M\) of \(p_M\) over the base point \(\ast:=[1]\,\,\in\,\, I\diagup 1\sim 0\,\,=\,\,S^1\) via the homeomorphism \(x \to [x,1],\,x\in T^m\).
\begin{proposition}\label{5.4}
 Given fiberwise maps \(\,f:=f_{L,\overline{v}},\,f_0: M\to N\,\) over \(S^1\) as in \ref{4.4} and \ref{4.5}, put \(E:=E_B(f,f_0)\) for short and let \(E|:=E|F_M\) denote the restriction to the fiber \(F_M=p_M^{-1}\left\{[1]\right\}=T^m\).
\begin{enumerate}[(i)]
 \item There is a commuting diagram of long exact sequences\[
%*&*[l] in erster und 5. Zeile an erster Stelle
\xymatrix@=1.09cm{
\ldots \ar[r] & \Omega_\ast^{\mathrm{fr}}(E|)\ar[r]^-{b_\ast -d\cdot\mathrm{id}} \ar[d]^-{\mathrm{pr}|_\ast} & \Omega_\ast^{\mathrm{fr}}(E|) \ar[r]^-{\mathrm{incl}_\ast} \ar[d]^-{\mathrm{pr}|_\ast} &
 \Omega_\ast(E; \mathrm{pr}^\ast (\varphi)) \ar[r]^-{\pitchfork} \ar[d]^-{\mathrm{pr}_\ast} & \Omega_{\ast-1}^{\mathrm{fr}}(E|)\ar[r] \ar[d]^-{\mathrm{pr}|_\ast} & \ldots\\
\ldots \ar[r] &\Omega_\ast^{\mathrm{fr}} (T^m) \ar[r]^-{A_{M\ast}-d\cdot\mathrm{id}}&
\Omega_\ast^{\mathrm{fr}} (T^m)\ar[r]^-{\mathrm{incl}_\ast} &
\Omega_\ast(M;\varphi) \ar[r]^-{\pitchfork} &
\Omega_{\ast-1}^{\mathrm{fr}} (T^m)\ar[r] & \ldots,
}
\]
and the homomorphism \(\pitchfork\) maps \(\widetilde{\omega}_B(f,f_0)\) (and \(\omega(f,f_0)\), resp.) to the corresponding \(\omega\)-invariants of the restricted pair \(\left(f|F_M, f_0|F_M\right)\) (compare section \ref{sec2}).

Here \(\,\Omega_\ast^{\mathrm{fr}}\,\) denotes framed bordism; moreover the selfmap \(b\) of \(E|\) is defined by \[b(x,\theta)=\left(A_M(x),A_N\circ\left(c_{-\overline{v}}\ast\theta\right)\right)\]
where \(c_{-\overline{v}}\) denotes the loop at \(\theta(0)\) in \(T^n\) which lifts to a path \(\overline{c}_{-\overline{v}}:\,I\,\to\, \mathbb{R}^n\,\) with constant velocity \(-\overline{v}\) and \(\ast\) means concatenation.
\item Similarly there is the commuting diagram of long exact sequences (in homology with \(\mathbb{Z}_2\)-coefficients)
\[
\xymatrix@=1.1cm{
\ldots \ar[r] & H_\ast (E|)\ar[r]^-{b_\ast -\mathrm{id}} \ar[d]^-{\mathrm{pr}|_\ast} & 
H_\ast(E|) \ar[r]^-{\mathrm{incl}_\ast} \ar[d]^-{\mathrm{pr}|_\ast} & 
H_\ast(E) \ar[r] \ar[d]^-{\mathrm{pr}_\ast} &
H_{\ast-1}(E|)\ar[r] \ar[d]^-{\mathrm{pr}|_\ast} & \ldots\\
\ldots \ar[r] &  H_\ast (T^m) \ar[r]^-{A_{M\ast}-\mathrm{id}}&
H_\ast (T^m)\ar[r]^-{\mathrm{incl}_\ast} &
H_\ast (M) \ar[r] &
H_{\ast-1} (T^m)\ar[r] & \ldots.
}\]
This is related to the diagram in (i) above by two commuting ladders which involve Hurewicz homomorphisms as rungs.
\end{enumerate}
\end{proposition}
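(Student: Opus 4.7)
The plan is to realise both $M$ and the pathspace $E := E_B(f_{L,\overline{v}}, f_0)$ as mapping tori over $S^1$ and then to deduce both diagrams from naturality of the standard Wang long exact sequence of a mapping torus. For $M$ this is built into the definition: $M$ is the mapping torus of $A_M\colon T^m\to T^m$, with $p_M$ the canonical projection to $S^1$ (cf.\ \ref{1.14}). For $E$, theorem \ref{4.10}(i) supplies a canonical homotopy equivalence, over $S^1$, from the mapping torus of the selfhomeomorphism $\overline{b}$ of $D_L$ (cf.\ \ref{4.9}) to $E$; transported through the homotopy equivalence $e$ of \ref{2.4}, $\overline{b}$ corresponds to the selfmap $b$ of $E|$ displayed in the statement, the prepended loop $c_{-\overline{v}}$ being precisely the geometric manifestation of the shift $q_n(t\overline{v})$ out of which $f_{L,\overline{v}}$ was assembled fibrewise in \ref{4.4}. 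The projection $\mathrm{pr}\colon E \to M$ covers $\mathrm{id}_{S^1}$, restricts fibrewise to $\mathrm{pr}|\colon E| \to T^m$, and intertwines $b$ with $A_M$, so it is literally a morphism of mapping tori.

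For any generalised homology theory $h_*$ and any selfmap $h\colon X\to X$, cutting the mapping torus $X_h$ along a fibre yields the Wang long exact sequence
\[
\ldots \to h_k(X) \xrightarrow{\, h_*-\mathrm{id}\,} h_k(X) \to h_k(X_h) \to h_{k-1}(X)\to\ldots .
\]
Applied to $A_M$ on $T^m$ and to $b$ on $E|$ with $h_* = H_*(\,\cdot\,;\mathbb{Z}_2)$ this produces the two rows of diagram (ii), and with $h_* = \Omega_*^{\mathrm{fr}}$ the lower row of diagram (i). Naturality of the Mayer--Vietoris construction with respect to the morphism of mapping tori $(\mathrm{pr}|,\mathrm{pr})$ produces both commuting ladders, and naturality of the Hurewicz transformations supplies the rungs connecting diagrams (i) and (ii).

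For the twisted upper row of diagram (i) the essential point is that $\varphi = p_M^\ast(\lambda_d)$ is pulled back from $S^1$ (cf.\ \ref{5.2}), hence so is $\mathrm{pr}^\ast(\varphi) = (p_M\circ\mathrm{pr})^\ast(\lambda_d)$; in particular it is trivial on every fibre of $p_M\circ\mathrm{pr}$, so the twisted bordism of the fibre is genuinely $\Omega_*^{\mathrm{fr}}(E|)$. Only the clutching term of the Wang sequence is affected by the twist: when the two copies of the fibre are glued together to form $E$, the coefficient bundle is identified via the monodromy of $\lambda_d$, which is multiplication by $d = \pm 1$; this replaces $b_* - \mathrm{id}$ by $b_* - d\cdot\mathrm{id}$, and likewise $A_{M*} - \mathrm{id}$ by $A_{M*} - d\cdot\mathrm{id}$ in the lower row. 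Verifying this sign carefully through the Mayer--Vietoris cofibre sequence is the main technical point, though it is entirely routine.

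Finally, $\pitchfork$ is, at the geometric level, the operation of making a representative transverse to a fibre of the circle projection and intersecting with that fibre; this lowers degree by one and lands in the (framed) bordism of the fibre. By the very definition of $\widetilde{\omega}_B(f,f_0)$ and $\omega_B(f,f_0)$ (cf.\ \ref{1.3}--\ref{1.6}), a transverse representative $(C,\widetilde{g},\overline{g})$ restricts over a fibre $F_M$ to a representative of the corresponding invariant of the restricted pair $(f|F_M, f_0|F_M)$. Hence $\pitchfork$ sends $\widetilde{\omega}_B(f,f_0)$ and $\omega_B(f,f_0)$ to the $\omega$-invariants of the fibre-restricted pair, as asserted.
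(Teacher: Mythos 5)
Correct, and essentially the same approach as the paper. The paper derives the Wang sequences by hand from the exact sequences of the pairs $(M, M-F_M)$ and of the corresponding pair for $E$, via explicit isomorphisms $\pitchfork^{\mathrm{rel}}$ and $in_\ast$ and a direct computation of the boundary operator, while you invoke the Wang sequence as a known consequence of cutting a mapping torus along a fibre; the underlying mechanism, including the appearance of the monodromy factor $d$ of $\lambda_d$ in the clutching term and the compatibility of $\pitchfork$ with the $\omega$-invariants via transverse intersection with a fibre, is identical.
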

\begin{proof}
 Consider any \textit{relative} normal bordism class\[
c=\left[C,g,\overline{g}\right]\in\Omega_\ast\left(M,M-F_M;\varphi\right)\]
i.e. the compact smooth manifold \(C\) has possibly a boundary \(\partial C\), \(g\) maps \(\left(C,\partial C\right)\) to \(\left(M,M-F_M\right)\) and \(\overline{g}\) is a stable trivialization of the vector bundle \(TC\oplus g^\ast(\varphi)\). After a small deformation we may assume that \(p_M\circ g\) is smooth and transverse to \(\{\ast\}:=\left\{[1]\right\}\subset S^1\). Restrict the data of \(c\) to the closed 1-codimensional submanifold \(\left(p_M\circ g\right)^{-1}\{\ast\}=g^{-1}\left(F_M\right)\) which inherits a framing since \(\lambda_d|\{\ast\}\) is trivial. This procedure yields the isomorphism
\begin{equation}\label{5.5}
\xymatrix{
 \pitchfork^{\mathrm{rel}} : \Omega_\ast\left(M,M-F_M;\varphi\right)\ar[r]^-{\cong} & \Omega_{\ast-1}^{\mathrm{fr}}(T^m).
}
\end{equation}
Furthermore for small \(\varepsilon>0\) the obvious composite inclusion map
\begin{equation}\label{5.6}
 in: T^m \approx T^m\times\{1-\varepsilon\}\subset T^m\times(0,1)\approx M-F_M
\end{equation}
(compare \ref{1.14}) is a homotopy equivalence and hence induces a canonical isomorphism
\begin{equation}\label{5.7}
\xymatrix{
 in_\ast: \Omega_\ast^{\mathrm{fr}}(T^m) \ar[r]^-{\cong} & \Omega_\ast^{\mathrm{fr}}\left(M-F_M\right) = \Omega_{\ast}\left(M-F_M; p_M^\ast(\lambda_d)\right|).
}
\end{equation}

If we use these isomorphisms \(\pitchfork^\ast\) and \(in_\ast\) to simplify the exact normal bordism sequence of the pair \(\left(M,M-F_M\right)\) (compare e.g. [CF], p. 13) we obtain the lower sequence of the diagram in \ref{5.4}(i). The second and third homomorphisms are induced by the inclusion map and by transverse intersection with the fiber \(F_M\). It remains to calculate the first homomorphism (which  is derived from the boundary operator \(\partial\)). Given a framed singular manifold\[
\xymatrix{
h: H\to F_M=T^m\times \{1\} \left( \ar[r]^-{A_M}_-{\cong} \right.& \left.T^m\times \{0\}\right)
}\]
(compare \ref{1.14}), take its product with the inclusion map \(i\) from the interval \(J=[1-\varepsilon,1]\cup[0,\varepsilon]\diagup 1\sim 0\) into the basis circle \(B=I\diagup 1 \sim 0\). Then the boundary of \(h\times i\) represents \(\partial\circ \left(\pitchfork^{\mathrm{rel}}\right)^{-1} \left([h]\right)\). When we apply the isomorphism \(in_\ast^{-1}\) the two boundary parts \(h_\varepsilon\) and \(h_{1-\varepsilon}\) get shifted to \(A\circ h\) and \(h\), resp. Therefore
\[
\pm in_\ast^{-1}\circ\partial\circ\left(\pitchfork^{\mathrm{rel}}\right)^{-1}=A_{M\ast} -d\,\mathrm{id};
\]
the factor \(d=\pm 1\) (cf. \ref{5.3}) appears due to the possible switch of framings induced from \(\lambda_d\) at \([1]=[0]\in B\). We obtain the upper exact sequence in the diagram of proposition \ref{5.4}(i) in an analoguous fashion. The compatibility of the \(\omega\)-invariants with \(\pitchfork\) is seen by inspecting their definitions.

For the proof of our claim (ii) consider the isomorphisms
\begin{equation}\label{5.8}
 \xymatrix{
H_{\ast}(T^m)\ar[r]_-{\cong}^-{\times} & H_{\ast+1}\left(T^m\times \left(J, J-\{\ast\}\right)\right)\ar[r]_-{\cong}^-{i_\ast} & H_{\ast+1}\left(M,M-T^m\right)
}
\end{equation}
(compare \ref{5.5}) where \(J\) is again a closed interval around the basepoint \(\ast=[1]\) in \(S^1\), \(\times\) is defined by the homology cross product with the generator of \(H_1\left(J,J-\{\ast\}\right)\cong H_1\left(J,\partial J\right)\) (cf. [S], pp. 234-235) and \(i\) denotes the composite of a fiber preserving homeomorphism over \(J\) and the excision inclusion of \(p_M^{-1}\left(J,J-\{\ast\}\right)\) into \(\left(M,M-T^m\right)\). We compare the two long exact homology sequences of the two space pairs at the right hand side in \ref{5.8}. Due to the compatibilities of the cross product (cf. [S], p. 235, fact 15) the boundary homomorphism in the first of these sequences can be identified with the diagonal map into \(H_\ast(T^m)\oplus H_\ast(T^m)=H_\ast\left(T^m\times \partial J\right)\). Since the two points of \(\partial J\) lie on different sides of \(\ast\) in \(S^1\) the gluing map of \(M\) comes into play again when we use the isomorphism induced by the inclusion map \(in\) (cf. \ref{5.6}). Thus the exact homology sequence of the pair \(\left(M,M-T^m\right)\) takes the form described in the lower line of the diagram in claim (ii). 

The upper sequence in (ii) can be obtained in a similar way. Here it may be helpful to use the mapping torus model of \(E\) (cf. theorem \ref{4.10}(i)) and to exploit the fact that it is the total space of a locally trivial fibration.

It is not hard to check that all the identifications and other homomorphisms involved in our two diagrams are compatible with the \(\mathrm{mod}\,2\)-Hurewicz homomorphisms.
\end{proof}
\begin{remark}\label{5.9}
 If \(\,\overline{L}\left(\mathbb{R}^m\right)=\mathbb{R}^n\,\) then \(\,\mu_2\circ\pitchfork\left(\widetilde{\omega}_{B,Q}(f,f_0)\right)\,\) (cf. \ref{6.14}) is nontrivial for every pathcomponent \(Q\) of \(\,E_B(f,f_0)\,\) (cf. \ref{2.3}ii) and hence so is \(\,\mu_2\left(\widetilde{\omega}_{B,Q}(f,f_0)\right)\,\) by proposition \ref{5.4}.
\end{remark}

%--------------------------------------------------------------------
% ----------------- section 6 -------------------------------------
%-------------------------------------------------------------------

\section{The case \(\mathrm{dim} \left(\overline{L}(\mathbb{R}^m)\right)=n-1\)}\label{sec6}
Throughout this section we consider again fiberwise maps \(\,f:=f_{L,\overline{v}},\ f_0\,:\,M\,\to\,N\,\) over \(S^1\) as in \ref{4.4}, \ref{4.5} and proposition \ref{5.4} and we assume that the image of the linear map \(\overline{L}: \mathbb{R}^m\, \to\,\mathbb{R}^n\) has dimension \(n-1\). As usual \(H_\ast (-)\) denotes homology with \textbf{\textit{coefficients in \(\mathit{\mathbf{\mathbb{Z}_2}}\)}}.

According to theorem 2.3(i) the pathcomponents of \(E|\,=E\left(f|T^m, 0\right)\) are labelled by 
\[K:= \mathbb{Z}^n\diagup\overline{L}(\mathbb{Z}^m)\]
and each of them is homotopy equivalent to a torus of dimension \(m-n+1\). Therefore we may identify \(H_{m-n+1} (E|)\) with \(\bigoplus_K \mathbb{Z}_2\). Then proposition 5.4 yields the exact sequence
\begin{equation}\label{6.1}
\xymatrix@=2.0cm{
 \displaystyle\bigoplus_K \,\mathbb{Z}_2\, \ar[r]^-{b_\ast-\mathrm{id}}\, & \,\displaystyle\bigoplus_K\, \mathbb{Z}_2\, \ar[r]^-{\mathrm{incl}_\ast} \,& \,H_{m-n+1} (E)
} 
\end{equation}
where \(b_\ast\) maps the \(\mathbb{Z}_2\)-factor labelled by \([\overline{u}]\in K\) identically to the \(\mathbb{Z}_2\)-factor labelled by \(\beta\left([\overline{u}]\right)\) (cf. 4.9). Thus each orbit \(\mathcal{O}\) of \(\beta\) contributes a single \(\mathbb{Z}_2\)-factor to the cokernel of \(b_\ast-\mathrm{id}\) (and hence to the image of \(\mathrm{incl}_\ast\)), and \(\mathrm{incl}_\ast\) just forms the sum of the \(\mathbb{Z}_2\)-entries with labels in \(\mathcal{O}\). In particular, we obtain
\begin{lemma}\label{6.2}
 The image of any element of the form \((0,\ldots,0,1,0,\ldots)\) under \(\mathrm{incl}_\ast\) is nontrivial.
\end{lemma}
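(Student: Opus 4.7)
The plan is to exploit the exactness of \eqref{6.1} together with the orbit structure of $\beta$ on $K$. Concretely, since $b_\ast-\mathrm{id}$ preserves the orbit decomposition
\[
\bigoplus_K \mathbb{Z}_2 \;=\; \bigoplus_{\mathcal{O}\in K/\beta}\;\bigoplus_{[\overline{u}]\in\mathcal{O}} \mathbb{Z}_2,
\]
it suffices to understand the image of $b_\ast-\mathrm{id}$ on a single orbit summand and, via exactness, identify $e_{[\overline{u}^\ast]}$ as a nonzero class in the cokernel.

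First I would unfold the description of $b_\ast$ given just before the lemma: in $\bigoplus_K\mathbb{Z}_2$ one has
\[
(b_\ast-\mathrm{id})(e_{[\overline{u}]}) \;=\; e_{\beta([\overline{u}])}+e_{[\overline{u}]}
\]
for every $[\overline{u}]\in K$, where $e_{[\overline{u}]}$ denotes the basis vector corresponding to the coset $[\overline{u}]$ and we are in characteristic~$2$. Restricting to a single orbit $\mathcal{O}$ (finite or infinite), the image of $b_\ast-\mathrm{id}$ is generated by the binomials $e_{[\overline{u}]}+e_{\beta([\overline{u}])}$ with $[\overline{u}]\in\mathcal{O}$; every such generator has even total weight, so the image is contained in the kernel of the augmentation
\[
\varepsilon_{\mathcal{O}}\;:\;\bigoplus_{[\overline{u}]\in\mathcal{O}}\mathbb{Z}_2\;\longrightarrow\;\mathbb{Z}_2,
\qquad \varepsilon_{\mathcal{O}}\Bigl(\sum a_{[\overline{u}]}\,e_{[\overline{u}]}\Bigr)=\sum a_{[\overline{u}]}.
\]

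Next I would verify the converse inclusion, i.e. that every element of $\ker\varepsilon_{\mathcal{O}}$ is in the image. This is straightforward: any two basis vectors $e_{[\overline{u}_1]},e_{[\overline{u}_2]}$ in the same orbit are joined by a finite $\beta$-chain, so $e_{[\overline{u}_1]}+e_{[\overline{u}_2]}$ is a telescoping sum of generators and hence lies in the image; this already exhausts $\ker\varepsilon_{\mathcal{O}}$. (For a finite orbit of length $r$ this gives an $(r{-}1)$-dimensional image inside an $r$-dimensional space; for an infinite orbit the image has codimension $1$ in the direct sum of finitely supported sequences, again detected by $\varepsilon_{\mathcal{O}}$.) Collecting the orbits, $\mathrm{im}(b_\ast-\mathrm{id})$ is exactly the kernel of the total augmentation $\varepsilon:=\bigoplus_{\mathcal{O}}\varepsilon_{\mathcal{O}}$.

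Now given any basis vector $e_{[\overline{u}^\ast]}$, its restriction to its own orbit $\mathcal{O}^\ast$ has augmentation $\varepsilon_{\mathcal{O}^\ast}(e_{[\overline{u}^\ast]})=1\neq 0$ in $\mathbb{Z}_2$; hence $e_{[\overline{u}^\ast]}\notin\mathrm{im}(b_\ast-\mathrm{id})$. By exactness of \eqref{6.1} this means $\mathrm{incl}_\ast(e_{[\overline{u}^\ast]})\neq 0$ in $H_{m-n+1}(E)$, which is the claim. No step here is delicate; the only thing to be slightly careful about is handling infinite orbits, where one must work with the direct sum (finitely supported sequences) rather than the direct product so that the telescoping identification $e_{[\overline{u}_1]}+e_{[\overline{u}_2]}\in\mathrm{im}(b_\ast-\mathrm{id})$ is legitimate.
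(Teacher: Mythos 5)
Your argument is correct and is essentially the same as the paper's: the paper's sentence immediately preceding the lemma asserts exactly that each orbit $\mathcal{O}$ contributes a single $\mathbb{Z}_2$-factor to $\mathrm{coker}(b_\ast-\mathrm{id})$ with the quotient map being the orbit-wise augmentation, and your proof simply spells out why (the image of $b_\ast-\mathrm{id}$ on each orbit summand is precisely $\ker\varepsilon_{\mathcal{O}}$, handled correctly in both the finite and the infinite, finitely-supported case), whence the conclusion by exactness of \eqref{6.1}.
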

Next note the direct sum decomposition
\begin{equation}\label{6.3}
  \mathbb{Z}^n \cong \left(\mathbb{Z}^n\cap \overline{L}\left(\mathbb{R}^m\right)\right) \oplus \mathbb{Z};
\end{equation}
here the \(\mathbb{Z}\)-factor is generated by a vector \(\overline{y}_2\) in \(\mathbb{Z}^n\) which realizes the 
minimal strictly positive distance between elements of \(\mathbb{Z}^n\) and the hyperplane \(\overline{L}\left(\mathbb{R}^m\right)\) in \(\mathbb{R}^n\).

Since \(\overline{L}\circ\overline{A}_M = \overline{A}_N\circ\overline{L}\) (compare proposition 4.2), \(\overline{A}_N\) preserves \(\mathbb{Z}^n\cap \overline{L}\left(\mathbb{R}^m\right)\) and induces the automorphism \(\mathrm{a}\cdot\mathrm{identity}\) on the quotient \(\mathbb{Z}^n \diagup \left( \mathbb{Z}^n \cap \overline{L}\left(\mathbb{R}^m \right)\right) \cong \mathbb{Z}\) where \(a=\pm 1\).
\begin{theorem}\label{6.4}
 Assume \(a=+ 1\). We may choose vectors \(\,\;\overline{w}_1,\ldots,\overline{w}_{n-1}\in\mathbb{Z}^n\) which generate \(\overline{L}\left(\mathbb{Z}^m\right)\). Then\[
\mathrm{MCC}_B \left(f_{L,\overline{v}}, f_0\right)=\mathrm{N}_B \left(f_{L,\overline{v}}, f_0\right) = \left|\det\left(\overline{v},\overline{w}_1,\ldots,\overline{w}_{n-1}\right)\right|.\]
This is also equal to the Reidemeister number \(\,\,\# \mathrm{R}_B\!\left(f_{L,\overline{v}}, f_0\right)\) whenever \(\overline{v}\notin \overline{L}\left(\mathbb{R}^m\right)\); otherwise the Reidemeister set of \(\left(f_{L,\overline{v}}, f_0\right)\) is infinite.
\end{theorem}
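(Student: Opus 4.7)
Write $W := \overline L(\mathbb R^m)$, $\tau := [\,\mathbb Z^n\cap W : \overline L(\mathbb Z^m)\,]$, and let $\bar v_{\mathbb Z}$ denote the image of $\overline v$ under the projection $\mathbb Z^n \to \mathbb Z^n/(\mathbb Z^n\cap W)\cong\mathbb Z$ coming from the splitting (\ref{6.3}). Expanding the determinant in a basis compatible with that splitting immediately gives $|\det(\overline v,\overline w_1,\ldots,\overline w_{n-1})| = \tau\cdot|\bar v_{\mathbb Z}|$, which vanishes precisely when $\overline v\in W$. The plan is to handle the two cases separately.

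When $\overline v\notin W$ the fiberwise derivative $(\dot x,\dot t)\mapsto\overline L(\dot x)+\dot t\,\overline v$ is surjective, so $(f_{L,\overline v},f_0)$ is transverse to the diagonal and $C := C_B(f_{L,\overline v},f_0)$ is a closed $(m+1-n)$-submanifold of $M$. Because $a=+1$, the $T^{n-1}$-subbundle $\tilde L\subset N$ with fibers $L(T^m)$ is globally defined and the quotient $\bar N := N/\tilde L$ is the trivial $T^1$-bundle over $S^1$; the composite $\bar\pi\circ f_{L,\overline v}\colon M\to\bar N$ reduces to $[x,t]\mapsto[q_1(t\bar v_{\mathbb Z}),t]$, so the coincidences concentrate in exactly the $|\bar v_{\mathbb Z}|$ fibers $F_M^{t_k}$ with $t_k := k/\bar v_{\mathbb Z}$. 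In each of these $C\cap F_M^{t_k}$ is a coset of $L^{-1}(0)\subset T^m$ and, as in the proof of theorem \ref{2.3}, splits into $\tau$ components each homotopy equivalent to $T^{m-n+1}$. Hence $|\pi_0(C)|=\tau|\bar v_{\mathbb Z}|=|\det(\cdot)|$, giving $\mathrm{MCC}_B\leq|\det(\cdot)|$. On the Reidemeister side, $a=+1$ makes $\beta$ act on $K/T\cong\mathbb Z$ as translation by $-\bar v_{\mathbb Z}$; every orbit is therefore infinite, and the short exact sequence $0\to T\to K\to\mathbb Z\to 0$ yields exactly $\tau\,|\bar v_{\mathbb Z}|=|\det(\cdot)|$ orbits, so $\#\mathrm R_B=|\det(\cdot)|$. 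To close this case I must show each pathcomponent $Q$ of $E_B$ is essential: $C_Q$ lies in one fiber $F_M^{t_k}$, theorem \ref{2.3}(ii) applied in that fiber (using the lift $t_k\overline v-k\overline y_2\in W$ of $q_n(t_k\overline v)\in L(T^m)$) identifies $\widetilde g|_{C_Q}$ with a homotopy equivalence onto one component $Z$ of $E_B|_{F_M^{t_k}}$, while the mapping-torus model of theorem \ref{4.10}(i) exhibits $Q$ as the total space of a bundle over the contractible base $\mathbb R$ with fiber $Z$ (since the orbit is infinite), making $Z\hookrightarrow Q$ also a homotopy equivalence. Composing, $\widetilde g\colon C_Q\to Q$ is a homotopy equivalence, so $\widetilde g_\ast[C_Q]$ is a generator of $H_{m-n+1}(Q;\mathbb Z_2)$ and $Q$ is essential.

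When $\overline v\in W$ we have $\bar v_{\mathbb Z}=0$ and $|\det(\cdot)|=0$; since $\beta$ preserves each of the infinitely many levels of $K\to\mathbb Z$, $\#\mathrm R_B$ is infinite. To prove $\mathrm{MCC}_B=\mathrm N_B=0$ I would exhibit a fiberwise representative of the class of $f_{L,\overline v}$ disjoint from $f_0$. The projected section $\bar\pi\circ s_{\overline v}$ has winding number $\bar v_{\mathbb Z}=0$ in the trivial bundle $\bar N\cong T^1\times S^1$, so it is null-homotopic and deforms in $\bar N$ to a constant section at some $\delta\in T^1\setminus\{0\}$; by homotopy lifting for the fibration $N\to\bar N$ this lifts to a deformation of $s_{\overline v}$ through sections of $N$ of class $[\overline v]$ to a section $s'$ satisfying $s'(S^1)\cap\tilde L=\emptyset$. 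Setting $f' := L+s'\circ p_M$ yields a fiberwise map with $s_{f'}=s'$, so by proposition \ref{4.6} it lies in the same fiberwise homotopy class as $f_{L,\overline v}$, and any coincidence $f'([x,t])=f_0([x,t])$ would force $s'([t])=-L(x)\in L(T^m)\subset\tilde L$, contradicting the choice of $s'$.

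The main technical obstacle I foresee is the essentiality step in the first case, specifically matching the per-fiber homotopy equivalences of theorem \ref{2.3}(ii) at the various levels $t_k$ with the global pathcomponent $Q$ in the mapping-torus model of theorem \ref{4.10}(i); once this compatibility is in hand (and it can also be checked via the $\mathbb Z_2$-Wang sequence of proposition \ref{5.4}(ii) combined with lemma \ref{6.2}), the rest of the argument is essentially bookkeeping.
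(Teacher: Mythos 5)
Your proof is correct and its backbone matches the paper's: you handle the two cases $\overline v\notin W$ and $\overline v\in W$ separately, use the transversality of $(f_{L,\overline v},f_0)$ to get the concrete coincidence manifold, invoke theorem~\ref{2.3}(ii) for the per-fiber homotopy equivalence $\widetilde g|$, and exploit the fact that $a=+1$ forces every $\beta$-orbit to be infinite to conclude that the fiber inclusion $Z\hookrightarrow Q$ is also a homotopy equivalence (the paper does this by citing \ref{4.10}(i) directly; your ``bundle over $\mathbb R$'' reformulation is the same observation). The one genuine presentational difference is your explicit quotient bundle $\bar N=N/\tilde L\cong T^1\times S^1$: the paper instead works directly from formula \ref{6.5} and the paralleliped $P$ for the count, and for the $\overline v\in W$ case simply says one may push the zero section away from the cooriented hypersurface $f(M)$. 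Your quotient-bundle device is a nice way to make that coorientation argument concrete (it exhibits the deformation explicitly and also localizes the coincidences in the $|\bar v_{\mathbb Z}|$ fibers $F_M^{t_k}$), and it makes the identity $|\det|=\tau\cdot|\bar v_{\mathbb Z}|$ transparent; but the proofs are logically parallel, not different in strategy. One small thing worth checking explicitly, which you flag yourself: the shift vector $q_n(t_k\overline v)$ along the special fiber $F_M^{t_k}$ must land in $L(T^m)$ so that theorem~\ref{2.3}(ii) applies there — your computation $t_k\overline v-k\overline y_2\in W$ handles this correctly.
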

\begin{proof}
 Put \(f:= f_{L,\overline{v}}\) for short. If \(\overline{v}\in \overline{L}\left(\mathbb{R}^m\right)\) we can push the zero section \(s_0\) of \(p_N\) slightly away from the image of \(f\) (which is a cooriented hypermanifold in \(N\)). Thus the pair \((f, f_0)\) is loose over \(B\), its minimum and Nielsen numbers vanish, and so does the determinant of the \(n\times n\)-matrix formed by the vectors \(\overline{v},\overline{w}_1,\ldots,\overline{w}_{n-1}\). Furthermore, \(\overline{A}_N\) and \(\beta\) (cf. 4.9) preserve the \(\mathbb{Z}\)-levels in \ref{6.3}; thus there are infinitely many orbits of \(\beta\) or, equivalently, Reidemeister classes (cf. theorem 4.1(i)).

Next consider the case where \(\overline{v}\notin \overline{L}\left(\mathbb{R}^m\right)\). Here the derivatives of the map \(\overline{\ell}: \mathbb{R}^m\times I\, \to \,\mathbb{R}^n\), defined by\[
\overline{\ell}\left(\overline{x},t\right)=\overline{L}(\overline{x})+t\,\overline{v},\]
are everywhere surjective. Therefore the coincidence manifold (which has the form
\begin{equation}\label{6.5}
 \begin{split}
  C(f,f_0)&=\left(q_m\times \mathrm{id}_I\right)\left(\overline{\ell}^{-1}\left(\mathbb{Z}^n\right)\right) \diagup(x,1)\sim\left(A_M(x),0\right)\\
&= \displaystyle\coprod_{\overline{u}\in \mathbb{Z}^n\cap P} \left(q_m\times \mathrm{id}_I\right) \left(\overline{\ell}^{-1}\left\{\overline{u}\right\}\right)\diagup\sim
 \end{split}
\end{equation}
where
\begin{equation}\label{6.6}
 P:=\left\{t_0\overline{v}+t_1\overline{w}_1+\ldots+t_{n-1}\overline{w}_{n-1}\,|\,\text{ all }\, t_i\in[0,1)\right\})
\end{equation}
satisfies the necessary transversality condition and can be used to calculate \(\widetilde{\omega}_B(f,f_0)\) (compare the discussion of 1.4 above or in [GK]). 

Given an integer vector \(\overline{u}\) in the paralleliped \(P\) (cf. \ref{6.6}), the pathcomponent \[
C_{\overline{u}}:= \left(q_m\times \mathrm{id}_I\right) \left(\overline{\ell}^{-1}\left\{\overline{u}\right\}\right)\]
of \(C(f,f_0)\) is an affine \((m-n+1)\)-dimensional subtorus of the fiber \(p_M^{-1}\{\tau\} =T^m\times \{\tau\}\) over \(\tau=\left[\frac{i}{\left|\overline{v}_2\right|}\right]\in S^1=I\diagup\!\sim\,~\)  for some integer \(0\leq i < \left|\overline{v}_2\right|\) where \(\overline{v}_2\) denotes the \(\mathbb{Z}\)-component of \(\overline{v}\) (cf. \ref{6.3}). Consider the restricted map\[
\xymatrix@=1.2cm{
f|\, : T^m\times \{\tau\}\,\, \ar[r] &\,\, T^n\times \{\tau\}}\]
and the resulting coincidence data\[
\xymatrix@=2cm{
C(f|,0)\ar[r]^-{\widetilde{g}|}\ &\ E(f|,0)\ar[r]^-{\mathrm{incl}}\ &\ E\left(f_{L,\overline{v}},f_0\right)
}.\]
It follows from theorem 2.3(ii) that \(\widetilde{g}|\) yields a homotopy equivalence from the pathcomponent \(C_{\overline{u}}\) of \(C(f|,0)\) to some pathcomponent of \(E(f|,0)\). The corresponding statement holds for the inclusion map \(\mathrm{incl}\) since all orbits of \(\beta\) are infinite here (use \ref{4.10}i). Therefore \(\,\widetilde{g}\,:\,C(f,f_0)\,\to\,E(f,f_0)\) (cf. \ref{1.4}) as a whole is a homotopy equivalence. Indeed, since \(a=+ 1\) the Reidemeister set \(\pi_0\left(E\left(f_{L,\overline{v}},f_0\right)\right)\) also corresponds bijectively to the elements \(\overline{u}\) of \(\,\,\mathbb{Z}^n \cap P\). It is not hard to check that this is compatible with the decomposition \ref{6.5}. In particular, all pathcomponents of \(E(f,f_0)\) are essential (as detected by \(\mathrm{mod\,2}\) homology \(H_{m-n+1}\)) and all Nielsen classes are pathconnected. Their number equals \(\,\,\#\left(\mathbb{Z}^n\cap P\right)\,\), i.e. the volume of the paralleliped \(P\) in \(\mathbb{R}^n\) which can be described by the indicated determinant.
\end{proof}

Given a vector \(\overline{v}\in\mathbb{Z}^n\), consider its decomposition
\begin{equation}\label{6.7}
 \overline{v}=\overline{v}_1+\overline{v}_2\in\left(\mathbb{Z}^n\cap \overline{L}\left(\mathbb{R}^m\right)\right) \oplus\mathbb{Z}
\end{equation}
(cf. \ref{6.3}); clearly \(\overline{v}_2\) can be described, up to sign, in terms of the Euclidean distance function by
\begin{equation}\label{6.8}
 \left|\overline{v}_2\right| = \frac{\mathrm{dist}\left(\overline{v},\overline{L}\left(\mathbb{R}^m\right)\right)} {\mathrm{dist}\left(\overline{y}_2,\overline{L}\left(\mathbb{R}^m\right)\right)}
\end{equation}
where \(\,\overline{y}_2\,\) generates the \(\,\mathbb{Z}\)-factor.\\
If \(a=-1\) only the residue class\[
[\overline{v}]\in\mathbb{Z}^n\diagup\left(\overline{A}_N -\mathrm{id}\right)\left(\mathbb{Z}^n\right)\]
and hence only the parity of the integer \(\overline{v}_2\) is determined by the fiberwise homotopy class of \(f\) (cf. prop. \ref{4.6}). In fact, if \(\overline{v}_2\) is even we may find a representive of \([\overline{v}]\) which lies in \(\overline{L}\left(\mathbb{R}^m\right)\) (see also the proof below).
\begin{theorem}\label{6.9}
 Assume \(a=-1\). Then the Reidemeister set \(\pi_0\left(E_B\left(f_{L,\overline{v}},f_0\right)\right)\) is infinite.

If \(\overline{v}_2\) is odd, then the pair \(\left(f_{L,\overline{v}},f_0\right)\) is loose and\[
\mathrm{MCC}_B \left(f_{L,\overline{v}},f_0\right) =\mathrm{N}_B\left(f_{L,\overline{v}},f_0\right) =0\]

If \(\,\overline{v}\in\overline{L}\left(\mathbb{R}^m\right)\), consider the selfmap \(\beta|K'\) defined on the finite set\[
K':=\left(\mathbb{Z}^n\cap\overline{L}\left(\mathbb{R}^m\right)\right)\diagup \overline{L}\left(\mathbb{Z}^m\right)\]
by \(\,\,\beta|K'\left([\overline{u}]\right) = \left[\overline{A}_N\left(\overline{u}-\overline{v}\right)\right]\), \(\,[\overline{u}]\in K'\) (compare \ref{1.17}). Then \[
\mathrm{MCC}_B \left(f_{L,\overline{v}},f_0\right)= \mathrm{N}_B\left(f_{L,\overline{v}},f_0\right)\]
equals the number of \textnormal{odd} order orbits of the \(\mathbb{Z}\)-action on \(K'\) determined by \(\beta|\). (For a more geometric interpretation of this ''odd order condition`` see theorem \ref{4.10}(i).)
\end{theorem}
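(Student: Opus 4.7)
The first claim is immediate from Theorem \ref{4.10}(i): pathcomponents of $E_B(f_{L,\overline{v}},f_0)$ correspond bijectively to orbits of $\beta$ on $K=\mathbb{Z}^n/\overline{L}(\mathbb{Z}^m)$, and under the splitting \ref{6.3} together with $a=-1$, the second-coordinate projection $K\twoheadrightarrow\mathbb{Z}$ intertwines $\beta$ with the affine involution $k\mapsto -k+\overline{v}_2$ on $\mathbb{Z}$, which has infinitely many orbits.

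For looseness when $\overline{v}_2$ is odd, I would exploit the fiberwise quotient $\widehat{N}\to S^1$ obtained by collapsing the codimension-one subbundle $L(T^m)\times_B B\subset N$. Its fibers are $T^n/L(T^m)\cong T^1$ and, by $a=-1$, its monodromy is $-\mathrm{id}$, so $\widehat{N}$ is a Klein bottle. A pair $(f',f_0)$ is coincidence-free precisely when the section $s_{f'}$ of $p_N$ projects to a section $\hat{s}_{f'}$ of $\widehat{N}$ missing the zero-section. The natural map $\pi_0(\text{sections of }N)\to\pi_0(\text{sections of }\widehat{N})\cong\mathbb{Z}_2$ sends $[\overline{v}]$ to $\overline{v}_2\bmod 2$---this is the quot-map of Theorem \ref{1.18}. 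When $\overline{v}_2$ is odd, the image is nontrivial; the nontrivial class in $\mathbb{Z}_2$ is represented on the Klein bottle by, e.g., the constant ``half-section'' $t\mapsto[q_1(1/2),t]$, which is disjoint from zero. Any lift of such a section to $p_N$, adjusted by a section of the subbundle (which does not affect disjointness from $L(T^m)\times_B B$) to attain the class $[\overline{v}]$, produces the required $f'$.

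For the main case $\overline{v}\in\overline{L}(\mathbb{R}^m)$, the plan is to adapt the argument of Theorem \ref{6.4} while accounting for the $a=-1$ symmetry. First, perturb $f_{L,\overline{v}}$ by adding a small generic $\epsilon(t)\overline{y}_2$ with $\epsilon(0)=\epsilon(1)=0$; this gives a transverse pair whose coincidence manifold $C$ decomposes into finitely many slices at the zeros of $\epsilon$, each a disjoint union over $K'=(\mathbb{Z}^n\cap\overline{L}(\mathbb{R}^m))/\overline{L}(\mathbb{Z}^m)$ of kernel-tori $T^{m-n+1}$. Next, by the label-tracking from the proof of Theorem \ref{6.4}, every such slice lies in the pathcomponent $Q_O$ of $E_B$ indexed by its $\beta|K'$-orbit $O\subset K'$. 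Essentiality is then tested by Theorem \ref{1.21} via $\widetilde{g}_*([C_{Q_O}])\in H_{m-n+1}(Q_O;\mathbb{Z}_2)$.

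The core difficulty lies in the essentiality dichotomy that distinguishes subcase $1-$ from the straightforward subcase $1+$. Each $Q_O$ is homotopy equivalent to a mapping torus of $\overline{b}^{|O|}$ on $T^{m-n+1}$, and by Proposition \ref{5.4}(ii) its top mod 2 homology survives from the fiber, with $\widetilde{g}_*([C_{Q_O}])$ equal to the image of the sum of fiber classes from the contributing slices. The $a=-1$ monodromy identifies pairs of slices of $C_{Q_O}$ at heights symmetric about a fixed point in the $\overline{y}_2$-direction; their fiber classes sum to zero mod $2$. For even-order orbits all slices pair off, so the class vanishes and $Q_O$ is inessential; for odd-order orbits an unpaired ``fixed'' slice remains and its fiber class is nontrivial, making $Q_O$ essential. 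Summing contributions yields $\mathrm{MCC}_B=\mathrm{N}_B=\#\{\text{odd-order }\beta|K'\text{-orbits}\}$.
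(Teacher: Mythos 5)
Your first two paragraphs are sound.  The infinitude of $\mathrm{R}_B$ follows as you say from Theorem~\ref{4.10}(i) and the induced involution on the $\mathbb{Z}$-factor.  Your looseness argument for odd $\overline{v}_2$ takes a different route from the paper's (which constructs an explicit homotopy $h$ pushing $s_{\overline{v}}$ into a codimension-one submanifold $N_1$ disjoint from $N_0\supset f_0(M)$): you instead pass to the quotient bundle $\widehat{N}=N/\left(L(T^m)\times_B B\right)$, a Klein bottle, identify the quot-map of Theorem~\ref{1.18} with projection of section classes, and realize the nontrivial class by a constant ``half-section''.  This is valid and arguably more conceptual, though it requires a small check that the subbundle is well-defined (i.e.\ that $L(T^m)$ is closed in $T^n$, which holds because $\overline{L}(\mathbb{R}^m)$ is a rational subspace) and $\overline{A}_N$-invariant (which is \ref{1.16}).

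The main case, however, has a genuine gap.  You correctly compute that $\widetilde{g}_*\left(\left[C_{Q_O}\right]\right)\in H_{m-n+1}(Q_O;\mathbb{Z}_2)$ is nonzero for odd-order orbits $O$ and zero for even-order orbits (via the exact sequence \ref{6.1} and Lemma~\ref{6.2}, with the cokernel of $b_*-\mathrm{id}$ restricted to an orbit being $\mathbb{Z}_2$, detected by the parity of $|O|$).  This gives only the lower bound $\mathrm{N}_B\geq\nu_{\mathrm{odd}}$, using the trivial direction of Theorem~\ref{1.21} (nonzero mod 2 image $\Rightarrow$ essential).  Your claim that for even-order orbits ``the class vanishes and $Q_O$ is inessential'' invokes the hard converse direction of Theorem~\ref{1.21}; but that direction is only established \emph{afterwards} in Theorem~\ref{6.14}, whose proof explicitly depends on the proofs of Theorems~\ref{6.4} and~\ref{6.9} — so the citation is circular.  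More importantly, even granting inessentiality of even orbits, your argument gives no upper bound on $\mathrm{MCC}_B$: inessentiality of a Nielsen class does not by itself tell you the coincidence components can actually be removed by a fiberwise homotopy.  The paper's proof supplies exactly this missing realization: when $\beta[\overline{u}]\neq[\overline{u}]$, the tori $\partial_0 C_{[\overline{u}]}$ and $\partial_0 C_{\beta[\overline{u}]}$ bound a connected $(m-n+2)$-manifold $C_{[\overline{u}]}$ embedded in $M$, and one deforms $f_s$ in a tubular neighbourhood of this bordism (pushing the $\overline{s}_2$-coordinate strictly negative) to eliminate both components without creating new ones.  Iterating, each orbit $O$ is reduced to a single coincidence component if $|O|$ is odd and to the empty set if $|O|$ is even, yielding $\mathrm{MCC}_B\leq\nu_{\mathrm{odd}}$.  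Only then does $\nu_{\mathrm{odd}}\leq \mathrm{N}_B\leq\mathrm{MCC}_B\leq\nu_{\mathrm{odd}}$ close the loop.  Your proposal needs this geometric pairwise-elimination step (or some other explicit realization argument) to be complete.
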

\begin{proof}
 Again put \(f:=f_{L,\overline{v}}\). The Reidemeister set \(\pi_0\left(E_B\left(f,f_0\right)\right)\) has the same cardinality as the set of \textit{all} orbits of the \textit{full} selfmap \(\beta\) of \(K=\mathbb{Z}^n\diagup\overline{L}\left(\mathbb{Z}^m\right)\cong K'\oplus\mathbb{Z}\) (cf. theorem 4.1(i) and \ref{6.3}).
Since \(a=-1\), \(\beta\) induces an involution on \(\mathbb{Z}\) and hence has infinitely many orbits.

The decomposition
\begin{equation}\label{6.3'}\tag{\ref{6.3}'}
 \mathbb{R}^n=\overline{L}\left(\mathbb{R}^m\right)\times \mathbb{R}
\end{equation}
corresponding to \ref{6.3} is compatible with the gluing ismorphism \(\overline{A}_N\) at least to the extend to imply that the sets \(\overline{L}\left(\mathbb{R}^m\right)\times \left\{\frac{i}{2}\right\}\times I,\) \(i=0,1\), project onto disjoint \(1\)-codimensional submanifolds \(N_i\) of \(N\). First we will show that \(s_{\overline{v}}\) (cf. \ref{4.1}) - and hence \(f_{L,\overline{v}}\) - can be deformed into \(N_i\) where \(i\equiv\overline{v}_2 \,\mathrm{mod}\, 2\).

There exists a homotopy\[
\xymatrix@=1.2cm{
h: I\times I\,\, \ar[r] &\,\, \mathbb{R}^n=\overline{L}\left(\mathbb{R}^m\right)\times \mathbb{R}}\]
such that for all \(t,\tau\in I\) we have 
\begin{enumerate}[(i)]
 \item \(h(t,0)=t\,\overline{v}=t\left(\overline{v}_1,\overline{v}_2\right)\);
\item \(h(0,\tau)=\overline{A}_N\left(0,-\tau\cdot\frac{\overline{v}_2}{2}\right)\)\\
\(h(1,\tau)=\left(\overline{v}_1,(2-\tau)\cdot\frac{\overline{v}_2}{2}\right)\); and
\item \(h(I\times \{1\})\subset\overline{L}\left(\mathbb{R}^m\right)\times \left\{\frac{\overline{v}_2}{2}\right\} \).
\end{enumerate}
The resulting homotopy \(H:I\times I\,\to \,\mathbb{R}^n\times I\), \(\ H(t,\tau):= \left(h(t,\tau),t\right)\), ist compatible (\(\mathrm{mod}\mathbb{Z}^n\)) with the gluing isomorphism \(\overline{A}_N\). When composed with the projection to \(N\) this yields a homotopy which deforms \(s_{\overline{v}}\) into a section of \(p_N\) whose image lies in \(N_i\).

If \(\overline{v}_2\) is odd, this induces (in view of proposition \ref{4.6}) also a homotopy which pushes \(f_{L,\overline{v}}\) into \(N_i=N_1\), i.e., away from the image of \(f_0\) which lies in \(N_0\).

If \(\overline{v}_2\) is even we may (and will) assume that - after a similar deformation - \(\overline{v}\) lies in \(\overline{L}\left(\mathbb{R}^m\right)\) (and hence \(f=f_{L,\overline{v}}\) maps into \(N_0\)). Then, given \(\overline{u}\in\mathbb{Z}^n\cap\overline{L}\left(\mathbb{R}^m\right)\),
\begin{equation}\label{6.10}
 C_{\overline{u}}:=\left(q_m\times\mathrm{id}\right) \left(\left\{\left(\overline{x},t\right) \in \mathbb{R}^m\times I\,|\, \overline{L}\,\overline{x}+t\,\overline{v}=\overline{u}\right\}\right)
\end{equation}
is a nonempty connected, \((m-n+2)\)-dimensional submanifold of \(T^m\times I\) with two boundary components \(\partial_0 C_{\overline{u}}\) and \(\partial_1 C_{\overline{u}}\) (at \(t=0\) and \(t=1\), resp.). Any two such submanifolds \(C_{\overline{u}},C_{\overline{u}'}\) are equal or disjoint according as \(\overline{u}-\overline{u}'\) lies in \(\overline{L}\left(\mathbb{Z}^m\right)\) or not. Hence all of them can be labelled by the elements \([\overline{u}]\) of the quotient set \(K'= \left(\mathbb{Z}^n\cap\overline{L}\left(\mathbb{R}^m\right)\right) \diagup \overline{L}\left(\mathbb{Z}^m\right)\).

The coincidence locus\[
C(f,f_0)\subset M=T^m\times I\diagup\!\sim\]
is obtained from the union of all manifolds \(C_{[\overline{u}]}\) by gluing (via \(\,A_M\), cf. \ref{1.14}) each top end \(\partial_1 C_{[\overline{u}]}\,= \,q_m\left(\overline{L}^{-1}\left(\left\{\overline{u}-\overline{v}\right\}\right)\right)\times \{1\}\) to the corresponding bottom end \(\partial_0 C_{\beta[\overline{u}]} = \,q_m\left(\overline{L}^{-1}\left(\left\{\overline{A}_N\left(\overline{u}-\overline{v}\right)\right\}\right)\right)\times \{0\}\) of \(C_{\overline{A}_N (\overline{u}-\overline{v})}\) (compare \ref{1.16} and \ref{4.10}).

The pair \(\left(f=f_{L,\overline{v}},f_0\right)\) has one big flaw: it does not yet satisfy the transversality condition needed for the construction of \(\omega\)-invariants (cf. [GK], 1.4). To make up for this, let us approximate \(s_{\overline{v}}\) by a smooth section \(s\) of \(p_N\) of the form
\begin{equation}\label{6.11}
 s\left([t]\right)=\left[q_n\left(\overline{s}_1(t),\overline{s}_2(t)\right),t\right],\quad t\in I,
\end{equation}
(compare \ref{4.1} and \ref{6.3'}) where \(\overline{s}_1(0)=0\,\) and (for some small \(\varepsilon>0\)) the values of \(\overline{s}_2(t)\) are \(t, 1-t\) and near \(\varepsilon\), resp., when \(t\) lies in the intervals \([0,\varepsilon],\,[1-\varepsilon,1]\) and \([\varepsilon,1-\varepsilon]\), resp. Then \(f\) is homotopic to the fiber preserving map \(f_s\) defined by
\begin{equation}\label{6.12}
 f_s\left([x,t]\right)=\left[L\,x+s(t),t\right]
\end{equation}
(compare 4.4). Moreover the pair \((f_s,f_0)\) satisfies the desired transversality condition (since \(a=-1\)).

Since \(f_0(M)\) lies in \(N_0=\left(L\left(T^m\right)\times I\right)\diagup\!\sim\ \) but \(s\left([t]\right)\) and \(f_s\left([x,t]\right)\) do so only when \([t]=[0]\in \left(I\diagup\!\sim\right)\ =\ S^1\), the coincidene manifold of \((f_s,f_0)\) is the intersection of \(C(f,f_0)\) with the fiber of \(p_M\) over \([0]=[1]\), i.e. \(C(f_s,f_0)\) is the (image of the) disjoint union of all the affine subtori \(\partial_0 C_{[\overline{u}]},\,[\overline{u}]\in K'\), in the torus \(p_M^{-1}\left\{[0]\right\}\subset M\) (compare the discussion of \ref{6.5}).

If \(\beta\left([\overline{u]}\right)\neq [\overline{u}]\) (compare the definitions in 4.9 and theorem \ref{6.9}), then \(\partial_0 C_{[\overline{u}]}\) and \(\partial_1 C_{[\overline{u}]}\approx\partial_0 C_{\beta[\overline{u}]}\) are disjoint in \(M\) and connected by the bordism \(C_{[\overline{u}]}\) which embeds naturally in \(M\).\\
Deform \(f_s\) (cf. \ref{6.12} and \ref{6.11}) in a small tubular neighbourhood of this bordism until the contribution of \(\overline{s}_2\) is strictly negative along all of \(C_{[\overline{u}]}\) but leave the contributions of \(\overline{s}_1\) and \(L\) unchanged. This eliminates the two coincidence components \(\partial_0 C_{[\overline{u}]}\) and \(\partial_0 C_{\beta[\overline{u}]}\) without creating new ones.

Now recall from theorem 4.1(i) that any given Nielsen class in \(C_B(f_s,f_0)\) consists of all those manifolds \(\partial_0 C_{[u]}\) which are labelled by the elements \([u]\) of the corresponding orbit of \(\beta|K'\). We can reduce it - via pairwise elimination as above - to a single component (or the empty set, resp.) if the cardinality of the orbit is odd (or even, resp.). This geometric fact is closely reflected by the algebra of the upper exact sequence in 5.4ii. In particular, each odd order orbit of \(\beta|K'\) corresponds to a Nielsen class which is both pathconnected and essential; the latter property is already detected by homology with coefficients in \(\mathbb{Z}_2\) (use lemma \ref{6.2}). This establishes the last claim in theorem \ref{6.9}.
\end{proof}
In the spirit of the previous proof we obtain also a geometric essentiality criterion.
\begin{corollary}\label{6.13}
 Assume that \(a=-1\). 

Then a pathcomponent \(Q\) of \(E_B\left(f_{L,\overline{v}},f_0\right)\) is essential if and only if the (restricted) composite projection (cf \ref{1.2} and \ref{1.4})\[
\xymatrix@=1.2cm{
p_M\circ\mathrm{pr}|\,:\, Q\,\,\ar[r] &\,\,B=S^1}\]
induces a nontrivial homomorphism from \(H_1\left(Q;\mathbb{Z}_2\right)\) to \(H_1\left(S^1;\mathbb{Z}_2\right)\).
\end{corollary}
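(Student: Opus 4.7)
The plan is to combine the orbit-theoretic criterion of Theorem~\ref{4.10}(i) with the geometric count of essential components in Theorem~\ref{6.9}, and then to observe that, in the case $a=-1$, the two notions of ``odd-order orbit'' involved (orbits of $\beta$ on $K=\mathbb{Z}^n/\overline{L}(\mathbb{Z}^m)$ on the one hand and orbits of $\beta|K'$ on $K'=(\mathbb{Z}^n\cap\overline{L}(\mathbb{R}^m))/\overline{L}(\mathbb{Z}^m)$ on the other) actually coincide.

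First, by Theorem~\ref{4.10}(i) the pathcomponent $Q$ is labelled by a $\beta$-orbit in $K$, and this orbit has odd cardinality if and only if $(p_M\circ\mathrm{pr}|)_\ast:H_1(Q;\mathbb{Z}_2)\to H_1(S^1;\mathbb{Z}_2)$ is surjective. Since $H_1(S^1;\mathbb{Z}_2)\cong\mathbb{Z}_2$, surjectivity is the same as nontriviality, and so it suffices to prove: $Q$ is essential if and only if the labelling orbit of $\beta$ on $K$ has odd order.

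To prove this, I will use the decomposition $K\cong K'\oplus\mathbb{Z}$ from (\ref{6.3}), noting that $\overline{A}_N$ acts on the $\mathbb{Z}$-factor as $a=-1$, so $\beta$ induces on $K/K'\cong\mathbb{Z}$ the affine involution $u_2\mapsto -u_2+\overline{v}_2$. Now split into the two cases of Theorem~\ref{6.9}. If $\overline{v}_2$ is odd, this involution has no fixed point, every induced orbit on $\mathbb{Z}$ has order $2$, and hence every $\beta$-orbit on $K$ has even order; at the same time Theorem~\ref{6.9} says the pair is loose, so no $Q$ is essential, and both sides of the equivalence are vacuously false. If $\overline{v}_2$ is even, choose a representative with $\overline{v}\in\overline{L}(\mathbb{R}^m)$ (so effectively $\overline{v}_2=0$); then $\beta$ preserves $K'$, the induced involution on $\mathbb{Z}$ is $u_2\mapsto -u_2$ with unique fixed point $0$, and a $\beta$-orbit on $K$ has odd order if and only if it is contained in $K'$, equivalently if and only if it is an odd-order orbit of $\beta|K'$. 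By Theorem~\ref{6.9} these are precisely the orbits that label the essential Nielsen classes, which closes the chain of equivalences.

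The main delicate point is to ensure that replacing the representative of $[\overline{v}]\in \mathbb{Z}^n/(\overline{A}_N-\mathrm{id})(\mathbb{Z}^n)$ in the even case does not spoil the bookkeeping: the change alters $\beta$ only by an affine isomorphism of $K$ (Lemma~\ref{4.8}), which preserves orbit cardinalities and is compatible with the labelling of pathcomponents of $E_B(f_{L,\overline{v}},f_0)$ supplied by Theorem~\ref{4.10}(i). So the two identifications are consistent. Beyond this, no further normal-bordism computation is needed: as Lemma~\ref{6.2} (together with the proof of Theorem~\ref{6.9}) already shows, essentiality for pairs over $S^1$ in this setting is detected by homology with $\mathbb{Z}_2$-coefficients, which is exactly what the $H_1$-criterion encodes.
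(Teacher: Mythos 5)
Your argument is correct and follows essentially the same route as the paper's proof: start from Theorem~\ref{4.10}(i) to translate the $H_1$-surjectivity condition into the odd-order-orbit condition, observe that odd-order orbits force $\overline{v}_2$ to be even and the orbit to lie in $K'$ (via the $K\cong K'\oplus\mathbb{Z}$ splitting and the sign $a=-1$ on the $\mathbb{Z}$-factor), and then invoke the proof of Theorem~\ref{6.9} to match odd-order orbits of $\beta|K'$ with essential components. Your version spells out the two cases of $\overline{v}_2$ and the role of Lemma~\ref{4.8} slightly more explicitly, but the structure and key steps coincide with the paper's. One small wording slip: you write that a $\beta$-orbit on $K$ ``has odd order if and only if it is contained in $K'$''; only the forward implication holds (an orbit in $K'$ can perfectly well have even order), and the correct equivalence is the one you immediately add, namely between odd-order orbits of $\beta$ on $K$ and odd-order orbits of $\beta|K'$. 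Since your subsequent conclusion refers to the latter set, the argument stands, but the intermediate biconditional should be phrased as an implication.
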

\begin{proof}
According to theorem \ref{4.10}(i) this induced homomorphism is onto precisely if the orbit \(\,\mathcal{O}=\{\ldots,\, [u],\,\beta[u],\ldots\}\,\) which corresponds to \(Q\) has an odd order. This can happen only if \(\,\overline{v}_2\equiv 0(2)\,\) (so that we may assume \(\overline{v}\) to lie in \(\,\overline{L}\left(\mathbb{R}^m\right)\)) and \(\,\mathcal{O}\subset K'\,\) (cf. theorem \ref{6.9}); indeed, when \(\beta\) alternates between different \(\mathbb{Z}\)-levels in \(\,\mathbb{Z}^n\diagup\!\overline{L}\left(\mathbb{Z}^m\right)\,\cong\,K'\oplus\mathbb{Z}\,\), the resulting orbits have an even cardinality. Our claim follows now from the proof fo theorem \ref{6.9}.
\end{proof}
Here is another, more general essentiality criterion in terms of \(\mathrm{mod}\,2\) homology.
\begin{theorem}\label{6.14}
 Whether \(a=+1\) or \(a=-1,\ \) a pathcomponent \(Q\) of \(E_B(f,f_0)\) is essential if and only if\[
\mu_2\left(\widetilde{\omega}_{B,Q}(f,f_0)\right)= \widetilde{g}_\ast\left(\left[C_Q\right]\right)\in H_{m-n+1}\left(Q;\mathbb{Z}_2\right)\]
does not vanish (where \(\mu_2\) is the Hurewicz homomorphism, cf. \ref{1.20}, and \[                                           \widetilde{\omega}_{B,Q}(f,f_0):= \left[C_Q,\,\widetilde{g}|C_Q,\, \overline{g}|\right]\in \Omega_{m-n+1}\left(Q;\widetilde{\varphi}|\right)\]
denotes the contribution of \(Q\) to \(\widetilde{\omega}_B(f,f_0)\), compare definition \ref{1.7}).
\end{theorem}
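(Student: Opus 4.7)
The plan is to first observe that the equation $\mu_2(\widetilde{\omega}_{B,Q}(f,f_0)) = \widetilde{g}_{\ast}([C_Q])$ is immediate from the definition of $\mu_2$, which forgets the twisted framing $\overline{g}|$ and records the image of the mod $2$ fundamental class of $C_Q$ under $\widetilde{g}|C_Q$. The direction ``$\widetilde{g}_{\ast}([C_Q]) \neq 0 \Rightarrow Q$ essential'' follows since $\mu_2$ is a group homomorphism. The substance is the converse; I will treat the cases $a = +1$ and $a = -1$ separately, reusing the structural analyses of Theorems \ref{6.4} and \ref{6.9}.

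For $a = +1$, the subcase $\overline{v} \in \overline{L}\left(\mathbb{R}^m\right)$ forces $\widetilde{\omega}_{B,Q} = 0$ for every $Q$ (the pair is loose), so nothing remains to check. In the subcase $\overline{v} \notin \overline{L}\left(\mathbb{R}^m\right)$, the proof of Theorem \ref{6.4} shows that $\widetilde{g}|C_Q$ is already a homotopy equivalence onto $Q$ from a closed $(m-n+1)$-torus, so $\widetilde{g}_{\ast}([C_Q])$ is carried to the mod $2$ fundamental class of $Q$ and is nonzero; every such $Q$ is essential, matching the biconditional.

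For $a = -1$, the subcase $\overline{v}_2$ odd is once again vacuous. Otherwise I reduce to $\overline{v} \in \overline{L}\left(\mathbb{R}^m\right)$ and invoke the transverse approximation $f_s$ from the proof of Theorem \ref{6.9}, whose generic coincidence manifold $C_B(f_s, f_0) = \coprod_{[\overline{u}] \in K'} \partial_0 C_{[\overline{u}]}$ sits entirely in the fiber $p_M^{-1}([0])$. Each $\partial_0 C_{[\overline{u}]}$ is an $(m-n+1)$-dimensional affine subtorus of $T^m$. Applying Theorem \ref{2.3}(ii) to the fiberwise restriction $(f|F_M, 0)$, I identify $\widetilde{g}_{\ast}([\partial_0 C_{[\overline{u}]}])$ with the basis element $e_{[\overline{u}]}$ of $H_{m-n+1}(E|) = \bigoplus_K \mathbb{Z}_2$. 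For a pathcomponent $Q$ corresponding to an orbit $\mathcal{O}$ of $\beta$ on $K = K' \oplus \mathbb{Z}$, the manifold $C_Q$ is empty whenever $\mathcal{O} \not\subset K' \times \{0\}$, so $\widetilde{g}_{\ast}([C_Q]) = 0$ and $Q$ is inessential. When $\mathcal{O} = \{[\overline{u}_0], \ldots, [\overline{u}_{q-1}]\} \subset K' \times \{0\}$, I compute in $H_{m-n+1}(E)$ via the exact sequence of Proposition \ref{5.4}(ii):
\[
\widetilde{g}_{\ast}([C_Q]) = \mathrm{incl}_{\ast}\left(\sum_{i=0}^{q-1} e_{[\overline{u}_i]}\right) = (q \bmod 2)\, \mathrm{incl}_{\ast}\left(e_{[\overline{u}_0]}\right),
\]
since each $e_{[\overline{u}_i]} + e_{\beta[\overline{u}_i]} = e_{[\overline{u}_i]} + e_{[\overline{u}_{i+1}]}$ lies in the image of $b_{\ast} - \mathrm{id}$, which coincides with $\ker \mathrm{incl}_{\ast}$ by exactness. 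By Lemma \ref{6.2}, $\mathrm{incl}_{\ast}(e_{[\overline{u}_0]}) \neq 0$, so $\widetilde{g}_{\ast}([C_Q]) \neq 0$ exactly when $q$ is odd --- precisely the essentiality criterion of Theorem \ref{6.9}.

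The hard part will be the identification $\widetilde{g}_{\ast}([\partial_0 C_{[\overline{u}]}]) = e_{[\overline{u}]}$, which requires tracing through the homotopy equivalences of Theorem \ref{2.3}(ii) and checking that when restricted to the fiber $p_M^{-1}([0])$ the fiberwise coincidence datum lands in the pathcomponent of $E(f|F_M, 0)$ labelled by $[\overline{u}]$. After that, the parity collapse in the exact sequence of Proposition \ref{5.4}(ii) is formal given Lemma \ref{6.2}. The conceptual point is that the torus-bundle setting is rigid enough --- every essential Nielsen class is represented (up to a fiberwise homotopy) by a single closed torus concentrated in one fiber --- that the mod $2$ Hurewicz image faithfully detects essentiality, even though in general the forgetful map $\mu_2$ from normal bordism to mod $2$ singular homology is far from injective on $\widetilde{\omega}$-invariants.
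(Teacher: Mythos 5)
Your proof is correct and follows essentially the same route as the paper: both rest on the geometric facts established in the proofs of Theorems \ref{6.4} and \ref{6.9}, the exact sequence of Proposition \ref{5.4}(ii), and Lemma \ref{6.2}. The only stylistic difference is that where the paper invokes the elimination argument of Theorem \ref{6.9}'s proof to reduce an essential $C_Q$ to a single affine subtorus and then reads off a single basis element $\mathrm{incl}_\ast(0,\ldots,0,1,0,\ldots)$, you instead compute the Hurewicz image of the unreduced coincidence manifold directly, obtaining $\widetilde{g}_\ast([C_Q])=(q\bmod 2)\,\mathrm{incl}_\ast(e_{[\overline{u}_0]})$ via the telescoping of $e_{[\overline{u}_i]}+e_{[\overline{u}_{i+1}]}$ in $\mathrm{im}(b_\ast-\mathrm{id})=\ker\mathrm{incl}_\ast$ --- a cleaner algebraic packaging of the same parity phenomenon, which then dovetails with the odd-orbit criterion from Theorem \ref{6.9}.
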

\begin{proof}
 If \(Q\) is essential, then - in each of the cases discussed in the previous proofs - the corresponding component \(C_Q\) of the (possibly modified) coincidence set is an affine subtorus of the fiber \(p_M^{-1}\left\{[t]\right\}\cong T^m\) for some \([t]\in S^1\). Moreover according to theorem 2.3(ii) \(\widetilde{g}|C_Q\) is a homotopy equivalence to some pathcomponent of \(E|=E_B(f,f_0)|T^m\). Thus \(\mu_2\left(\widetilde{\omega}_{B,Q}(f,f_0)\right)\) has the form \(\mathrm{incl}_\ast(0,\ldots,0,1,0,\ldots)\) (cf. \ref{6.1}) and must be nontrivial by lemma \ref{6.2}. This holds still true in the case \(a=-1\), \(\overline{v}\in\overline{L}\left(\mathbb{R}^m\right)\) where we deformed \((f,f_0)\) into \((f_s,f_0)\). Indeed, such a homotopy induces a fiber homotopy equivalence from \(E_B(f,f_0)\) to \(E_B(f_s,f_0)\) (compare [Ko2]) which is compatible with the arguments in our discussion.
\end{proof}
\begin{theorem}\label{6.15}
For each fiberwise map \(f:M\,\to\, N\)\[
\mathrm{MC}_B(f,f_0)=
\begin{cases}
 \mathrm{N}_B(f,f_0) & \text{if }\, \mathrm{N}_B(f,f_0)=0\, \text{ or }\, m<n;\\
\infty & \text{else.}
\end{cases}
 \]
\end{theorem}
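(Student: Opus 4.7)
The plan is to combine the Nielsen-number computations of Theorems \ref{6.4} and \ref{6.9} with an $\widetilde\omega$-invariant obstruction, exploiting the essentiality criterion of Theorem \ref{6.14}. The lower bound $\mathrm{N}_B(f,f_0)\leq \mathrm{MC}_B(f,f_0)$ is \eqref{1.9}. If $\mathrm{N}_B(f,f_0)=0$, then Theorems \ref{6.4} and \ref{6.9} give $\mathrm{MCC}_B(f,f_0)=0$, so $(f,f_0)$ is fiberwise loose and in particular $\mathrm{MC}_B(f,f_0)=0$.

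For $m<n$, the standing hypothesis $\dim\overline{L}(\mathbb{R}^m)=n-1$ forces $m=n-1$, hence the generic coincidence manifold has dimension $m+1-n=0$. Inspection of the constructions in the proofs of Theorems \ref{6.4} and \ref{6.9} (including, when $a=-1$, the pairwise cancellation of even-order orbits) shows that the resulting coincidence set is a finite disjoint union of isolated points, one per essential Nielsen class. Thus $\mathrm{MC}_B(f,f_0)\leq\mathrm{MCC}_B(f,f_0)=\mathrm{N}_B(f,f_0)$, which together with the lower bound yields equality.

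The remaining case $m\geq n$ with $\mathrm{N}_B(f,f_0)\neq 0$ is the main obstacle. I would argue by contradiction, imitating the second half of the proof of Corollary \ref{2.8}. Suppose $\mathrm{MC}_B(f,f_0)$ is finite, so that a fiberwise homotopy deforms $(f,f_0)$ into a pair $(f_1',f_2')$ with only finitely many coincidence points. By the naturality results cited from [Ko2] in the proof of Corollary \ref{2.8}, such a homotopy induces an isomorphism $H_\ast(E_B(f,f_0);\mathbb{Z}_2)\xrightarrow{\cong}H_\ast(E_B(f_1',f_2');\mathbb{Z}_2)$ that carries $\mu_2(\widetilde\omega_B(f,f_0))$ to $\mu_2(\widetilde\omega_B(f_1',f_2'))$. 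Since $\mathrm{N}_B(f,f_0)\neq 0$ there is an essential pathcomponent $Q$, and Theorem \ref{6.14} guarantees $\mu_2(\widetilde\omega_{B,Q}(f,f_0))\neq 0$ in $H_{m-n+1}(Q;\mathbb{Z}_2)$; here $m-n+1\geq 1$.

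To close the contradiction I must verify that $\mu_2(\widetilde\omega_B(f_1',f_2'))$ vanishes in the positive-dimensional degree $m-n+1$. For this, approximate $(f_1',f_2')$ by a transverse pair whose generic coincidence manifold $C'$ is concentrated in arbitrarily small, pairwise disjoint neighbourhoods of the original finite coincidence set. The lifted coincidence datum $\widetilde g':C'\to E_B(f_1',f_2')$ then factors up to homotopy through a finite disjoint union of contractible neighbourhoods, so $\widetilde g'_\ast[C']=0$ in $H_{m-n+1}$ whenever $m-n+1>0$. This contradicts the preserved non-triviality of $\mu_2(\widetilde\omega_B)$ and forces $\mathrm{MC}_B(f,f_0)=\infty$, completing the proof.
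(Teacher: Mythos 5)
Your argument is correct and follows essentially the same route as the paper's proof: both reduce the question to comparing $\mu_2(\widetilde\omega_B)$ before and after a hypothetical deformation to a finite coincidence set, both use the small-ball/nearly-constant approximation to show that a finite coincidence set forces $\widetilde g_\ast[C]=0$ in degree $m-n+1>0$, and both invoke the essentiality criterion of Theorem~\ref{6.14} to derive the contradiction with $\mathrm{N}_B\neq 0$. Your treatment of the $m<n$ (equivalently $m-n+1=0$) case by realizing each essential Nielsen class by a single point, via the constructions in Theorems~\ref{6.4} and \ref{6.9}, is also exactly the paper's argument.
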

\begin{proof}
 Suppose \((f,f_0)\) is homotopic to a pair \((f',f_0')\) with only a finite set \(C'\) of coincidence points. Then - after a further, small homotopy - we obtain a generic pair \((f_1,f_2)\) with coincidence data \(\left(C,\widetilde{g},\overline{g}\right)\) (cf. \ref{1.3}-\ref{1.5}) such that the coincidence manifold \(C\) (and the paths of \(\widetilde{g}(C)\), resp.) lie in small ball neighbourhoods of the points of \(C'\) in \(M\) (and of \(f'(C')=f_0'(C')\) in \(N\), resp.). Hence \(\widetilde{g}\) is homotopic to a locally constant map.

If \(m-n+1>0\), then \[
\widetilde{g}_\ast\left(\left[C\right]\right)=\mu_2\left(\widetilde{\omega}_B(f_1,f_2)\right) \in H_{m-n+1}\left(E_B(f_1,f_2)\right)\]
vanishes and so does \(\mathrm{N}_B(f_1,f_2)=\mathrm{N}_B(f,f_0) =\mathrm{MCC}_B(f,f_0)=\mathrm{MC}_B(f,f_0)\) in view of theorems \ref{6.14}, \ref{6.4} and \ref{6.9}.

If \(m-n+1=0\), then each essential Nielsen class can be realized by a single point (cf. the proofs of theorems \ref{6.4} and \ref{6.9}); therefore \(\mathrm{MC}_B(f,f_0)=\mathrm{N}_B(f,f_0)\).

These numbers vanish whenever \(m-n+1<0\).
\end{proof}
We have now established theorems \ref{1.13} and \ref{1.18} of the introduction. So let us turn to the proof of theorem \ref{1.22neu}.

First consider the subcase 1\(-\) in theorem \ref{1.18}. If \(\mathrm{quot}([\overline{v}])=0\) (i.e. \(\overline{v}_2\) is even) the Reidemeister invariant \(\,\varrho(f_1,f_2)\,\) (cf. \ref{1.26neu3}) can be represented by a selfmap \(\beta_{\overline{v}}\) of \[
\mathbb{Z}^n\diagup\overline{L}\left(\mathbb{Z}^m\right) \;\; \cong \;\; K' \times \mathbb{Z}\]
where \(\,K'=\left(\mathbb{Z}^n\cap\overline{L}\left(\mathbb{R}^m\right)\right)\diagup \overline{L}\left(\mathbb{Z}^m\right)\,\) (cf. \ref{6.3}, \ref{6.7} and theorem \ref{6.9}) and \(\,\overline{v}\in\overline{L}\left(\mathbb{Z}^m\right)\). Since \(\,\beta_{\overline{v}}\,\) maps \(\,K'\times \{i\}\,\) to \(\,K'\times\{-i\}\,\) for all \(i\in\mathbb{Z}\), the only odd order orbits can lie in \(K'\times \{0\}\) and all the other orbits have an even cardinality. If \(\overline{v}_2\) is odd, then \(\beta_{\overline{v}}\) restricts to a bijection between the two disjoint sets \(\,K'\times\{i\}\,\) and \(\,K'\times \{\overline{v}_2-i\}\,\) and there are no infinite or odd order orbits at all.

Next consider subcase 1+. If \(\overline{v}_2\neq 0\) then all orbits of \(\beta_{\overline{v}}\) are infinite and they can be indexed by the integer vectors in the paralleliped \(P\) (cf. \ref{6.6}). If \(\overline{v}_2=0\) and hence \(\beta_{\overline{v}}\) preserves the \(\mathbb{Z}\)-levels in \(\,K'\times\mathbb{Z}\,\), then every orbit in \(\,K'\times\{i\},\,i\in\mathbb{Z},\) gives rise to infinitely many ``parallel'' orbits in \(\,K'\times\left(i+\left(\#K'\right)\cdot\mathbb{Z}\right)\,\) which have the same (finite!) order; indeed write \(\,\overline{A}_{N\ast}(0,1)\,=\,(\kappa,1)\,\) in \(\,K'\times\mathbb{Z}\,\) and check how\[
\beta_{\overline{v}}(x,i)\;=\;\left(\beta_{\overline{v}}(x)+i\,\kappa\, , i\right),\quad x\in K',\]
depends on \(i\), when taken modulo the order \(\#K'\) of \(K'\).

Table \ref{6.16} sums up these observations concerning the number of orbits of the selfmap \(\beta_{\overline{v}}\) (which represents the Reidemeister invariant \(\,\varrho_B(f_1,f_2)\); cf. \ref{1.20neu}-\ref{1.26neu3}).
\refstepcounter{equation}
\begin{table}[h]\label{6.16}
%\begin{center}
\begin{tabular}{l||l|l|l||l}
 & \(\nu_{\mathrm{odd}}\) & \(\nu_{\mathrm{even}}\) & \(\nu_\infty\) & \(\nu_B=\nu'_{\mathrm{odd}}+\nu'_{\mathrm{even}}+\nu_\infty\) \\
\hline\hline
Case 0 & finite & finite & 0 & \(\#\{\text{all orbits}\}\quad\quad\;\,\neq 0\) \\
\hline\hline
Subcase 1+, \(\,\overline{v}_2\neq0\) & 0 & 0 & \(\#\left(\mathbb{Z}^n\cap P\right)\) & \(\#\left(\mathbb{Z}^n\cap P\right)\) (cf. \ref{6.6}) \(\neq0\) \\
\hline
Subcase 1+, \(\,\overline{v}_2=0\) & 0 or \(\infty\) & 0 or \(\infty\) & 0 & 0 \\
\hline\hline
Subcase 1-, \(\,\overline{v}_2\,\mathrm{odd}\) & 0 & \(\infty\) & 0 & 0 \\
\hline
Subcase 1-, \(\,\overline{v}_2\,\mathrm{even}\) & finite & \(\infty\) & 0 & \(\nu_{\mathrm{odd}}\)
\end{tabular}
\caption{The number of orbits of \(\varrho_B(f_1,f_2)\) (The pair \((f_1,f_2)\) is loose over \(S^1\) precisely if \(\nu_B=0\).)}
%\end{center}
\end{table}

According to theorem \ref{1.18} \(\;\mathrm{MCC}_B(f_1,f_2)=\mathrm{N}_B(f_1,f_2)\;\) agrees with the value of \(\nu_B\left(\varrho_B(f_1,f_2)\right)\) listed in table \ref{6.16}. In view of theorem \ref{1.13} this completes the proof of theorem \ref{1.22neu}.\(\hfill\Box\)\\

A simple way to obtain interesting illustrations for the results of this section is to start from a linear map \(\overline{L}\,:\,\mathbb{Z}^m\,\to\,\mathbb{Z}^n\) of rank \(n\) and compose it with the inclusion map
\begin{equation}
 i_n\;:\;\mathbb{Z}^n\;\subset\;\mathbb{Z}^{n+1}.
\end{equation}
(compare also example \ref{1.20neu2}).
\begin{example}\label{6.18}
 Start from the situation in example \ref{4.12neu} but add an extra Klein bottle \(K\) fiberwise to the target bundle \(N\) and replace \(\overline{L}\) and \(\overline{v}\ \) by \(\,i_n\circ\overline{L}\,:\,\mathbb{Z}^n\,\to\,\mathbb{Z}^{n+1}\) and \(\,i_n(\overline{v})\,\), resp. Then we are in subcase 1-. This transition preserves only \(\nu_{\mathrm{odd}}\); the extra space in \(\,N\,\times_B\,K\,\) makes Nielsen coincidence classes inessential whenever they correspond to an orbit having an even cardinality. \(\hfill\Box\)
\end{example}

%--------------------------------------------------------------------
% ----------------- biblio -------------------------------------
%-------------------------------------------------------------------

\end{document}